\DeclareSymbolFont{tipa}{T3}{cmr}{m}{n}
\DeclareMathAccent{\invbreve}{\mathalpha}{tipa}{16}
\crefname{section}{Section}{Sections}
\crefname{subsection}{\S}{\S\S}
\crefname{subsubsection}{\S}{\S\S}
\theoremstyle{plain}
\newtheorem{lemma}{Lemma}[section]
\newtheorem{proposition}[lemma]{Proposition}
\newtheorem{corollary}[lemma]{Corollary}
\newtheorem{theorem}[lemma]{Theorem}
\theoremstyle{nonumberplain}
\newtheorem{theoremN}{Theorem}
\theoremstyle{nonumberplain}
\newtheorem{definitionN}{Definition}
\theoremstyle{plain}
\newtheorem{definition}[lemma]{Definition}
\newtheorem{example}[lemma]{Example}
\newtheorem{examples}[lemma]{Examples}
\newtheorem{remark}[lemma]{Remark}
\newtheorem{construction}[lemma]{Construction}
\crefname{definition}{definition}{definitions}
\crefname{ex}{example}{examples}
\crefname{remark}{remark}{remarks}
\crefname{convention}{convention}{conventions}
\crefname{notation}{notation}{notations}
\crefname{table}{table}{tables}
\crefname{lemma}{lemma}{lemmas}
\crefname{proposition}{proposition}{propositions}
\crefname{corollary}{corollary}{corollaries}
\crefname{theorem}{theorem}{theorems}
\crefname{enumi}{}{}
\crefname{assumption}{assumption}{Assumptions}
\crefname{construction}{construction}{Constructions}
\crefname{equation}{}{}
\numberwithin{equation}{section}
\renewcommand{\theequation}{\thesection-\arabic{equation}}
\theoremstyle{nonumberplain}
\newtheorem{proof}{Proof}
\newcommand\bA{{\mathbb A}}
\newcommand\bB{{\mathbb B}}
\newcommand\bC{{\mathbb C}}
\newcommand\bD{{\mathbb D}}
\newcommand\bE{{\mathbb E}}
\newcommand\bF{{\mathbb F}}
\newcommand\bG{{\mathbb G}}
\newcommand\bH{{\mathbb H}}
\newcommand\bK{{\mathbb K}}
\newcommand\bL{{\mathbb L}}
\newcommand\bM{{\mathbb M}}
\newcommand\bN{{\mathbb N}}
\newcommand\bP{{\mathbb P}}
\newcommand\bQ{{\mathbb Q}}
\newcommand\bR{{\mathbb R}}
\newcommand\bS{{\mathbb S}}
\newcommand\bT{{\mathbb T}}
\newcommand\bZ{{\mathbb Z}}
\newcommand\cH{{\mathcal H}}
\newcommand\cK{{\mathcal K}}
\newcommand\numberthis{\addtocounter{equation}{1}\tag{\theequation}}
\newcommand{\qedhere}{\mbox{}\hfill\ensuremath{\blacksquare}}
\title{Type-I permanence}
\author{Alexandru Chirvasitu}
\begin{document}

\date{}

\newcommand{\Addresses}{{% additional braces for segregating \footnotesize
  \bigskip
  \footnotesize

  \textsc{Department of Mathematics, University at Buffalo, Buffalo,
    NY 14260-2900, USA}\par\nopagebreak \textit{E-mail address}:
  \texttt{achirvas@buffalo.edu}

% %   \medskip
% %   
% %   \textsc{Department of Mathematics, institution,
% %     address}\par\nopagebreak \textit{E-mail address}:
% %   \texttt{??}
% % 
}}

\maketitle

\begin{abstract}
  We prove a number of results on the survival of the type-I property under extensions of locally compact groups: (a) that given a closed normal embedding $\mathbb{N}\trianglelefteq\mathbb{E}$ of locally compact groups and a twisted action $(\alpha,\tau)$ thereof on a (post)liminal $C^*$-algebra $A$ the twisted crossed product $A\rtimes_{\alpha,\tau}\mathbb{E}$ is again (post)liminal and (b) a number of converses to the effect that under various conditions a normal, closed, cocompact subgroup $\mathbb{N}\trianglelefteq \mathbb{E}$ is type-I as soon as $\mathbb{E}$ is. This happens for instance if $\mathbb{N}$ is discrete and $\mathbb{E}$ is Lie, or if $\mathbb{N}$ is finitely-generated discrete (with no further restrictions except cocompactness). Examples show that there is not much scope for dropping these conditions.
  
  In the same spirit, call a locally compact group $\mathbb{G}$ type-I-preserving if all semidirect products $\mathbb{N}\rtimes \mathbb{G}$ are type-I as soon as $\mathbb{N}$ is, and {\it linearly} type-I-preserving if the same conclusion holds for semidirect products $V\rtimes\mathbb{G}$ arising from finite-dimensional $\mathbb{G}$-representations. We characterize the (linearly) type-I-preserving groups that are (1) discrete-by-compact-Lie, (2) nilpotent, or (3) solvable Lie.
\end{abstract}

\noindent {\em Key words: locally compact group; Lie group; type I; $C^*$-algebra; central extension; cohomology; exact sequence; LCA; nilpotent; solvable}

\vspace{.5cm}

\noindent{MSC 2020: 22D10; 22D05; 22D12; 22D15; 22D30; 22D35; 22D45; 22E25; 22E41; 46L05; 46L10}

\tableofcontents

%%%%%%%%%%%%%%%%%%%%%%%%%%%%%%%%%%%%%%%%%%%%%%%%%%%%%%%%%%%%%%%%%%%%%%%%%%%%%
%%%%%%%%%%%%%%%%%%%%%%%%%%%%%%%%%%%%%%%%%%%%%%%%%%%%%%%%%%%%%%%%%%%%%%%%%%%%%
\section*{Introduction}

The $C^*$-algebras (or locally compact groups) {\it of type I} are those for which it is possible to define a ``reasonable'' moduli space of irreducible $*$-representations on Hilbert spaces. The literature is by now staggering in depth and breadth, so we will content ourselves with citing some of the textbook-style sources (and pointing indirectly to {\it their} references): these include, for instance, \cite[Chapter 6]{ped-aut}, \cite[\S IV.1]{blk}, and \cite[Chapters 4 and 9]{dixc}.

Apart from `type-I' the phrases {\it GCR}, {\it postliminal} (or {\it postliminary}) and {\it having smooth dual} are also in use and synonymous, and we might revert to some of the other terms for variety. They were at various points introduced as separate properties, but are now known to all be equivalent under sensible separability assumptions \cite[Theorem 6.8.7]{ped-aut}. Many classes of (locally compact, second-countable) groups are known to be type-I:
\begin{itemize}
\item connected semisimple Lie groups;
\item connected nilpotent Lie groups;
\item connected components of real algebraic groups;
\item discrete groups precisely when they are {\it virtually abelian} (i.e. have finite-index abelian subgroups);
\item groups $\bG(k)$ of $k$-points for linear algebraic groups $\bG$ over local fields $k$ of characteristic zero.
\end{itemize}
Apart from the last item, which is \cite[Theorem 2]{be}, these are all recalled in \cite[Theorem 7.8]{folland} with citations to the original sources. 

One familiar technique for improving on such results is to assemble a group out of more manageable pieces, for which the property is already known to hold, and to show that that property survives under the various group-theoretic constructions involved: this is the {\it permanence} of the title.

Specifically, we are concerned here with how postliminality behaves under extensions
\begin{equation}\label{eq:introext}
  \{1\}\to \bN\to \bE\to \bG\to \{1\}
\end{equation}
of locally compact groups (always second-countable). Intuition and the examples dictate, for instance, that $\bE$ should again be of type I if $\bN$ is, provided $\bG$ is ``manageably small'':
\begin{itemize}
\item Thoma's characterization of type-I countable discrete groups \cite[Satz 6]{thoma} shows that finite-index embeddings make no difference.
\item Similarly, \cite[Lemme 3]{dix-alg} shows (by induction on the finite index $[\bE:\bN]$) that $\bE$ is type-I along with $\bN$ if the latter is normal and of finite index.
\item In fact $\bE$ is type-I whenever it contains a closed type-I subgroup $\bN\le \bE$ such that $\bE/\bN$ carries an $\bE$-invariant probability measure \cite[Theorem 1]{kal1} (see also \cite[Corollary 4.5]{gk}).
\item So in particular this certainly happens when $\bN\trianglelefteq \bE$ is normal and cocompact, as in that case the Haar probability measure on $\bE/\bN$ is $\bE$-invariant.
\end{itemize}

The upshot is that lifting (post)liminality along $\bN\le \bE$ goes through even upon relaxing the normality requirement. On the other hand, keeping $\bN\trianglelefteq\bE$ normal, one can add a {\it twisted action} \cite[\S 3]{qg-full} of $\bG$ into the mix, in the context of Green's {\it twisted covariance algebras} (\Cref{def:twisted}). In that setup a similar type-I-lifting result obtains (\Cref{th:tw}):

\begin{theoremN}
  Let $\bN\trianglelefteq\bE$ be a closed, cocompact normal subgroup of a second-countable locally compact group and $(A, \bE, \bN, \alpha,\tau)$ a twisted action on a separable $C^*$-algebra $A$.

  If $A$ is (post)liminal then so, respectively, is the twisted crossed product $A\rtimes_{\alpha,\tau}\bE$. \qedhere
\end{theoremN}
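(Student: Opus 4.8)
The plan is to trade the crossed product by the possibly-large group $\bE$ for a cocycle crossed product by the compact quotient $\bG:=\bE/\bN$, the Green twist over $\bN$ being exactly what accounts for the $\bN$-direction, and then to invoke the permanence of (post)liminality under crossed products by compact groups. First I would record the compactness input: since $\bN\trianglelefteq\bE$ is cocompact, $\bG$ is a second-countable compact group, and since $\bE$ is second-countable locally compact with $\bN$ closed, the quotient map $q\colon\bE\to\bG$ admits a Borel section $s\colon\bG\to\bE$. Writing $c(g,h):=s(g)s(h)s(gh)^{-1}\in\bN$ for the resulting $\bN$-valued $2$-cocycle, I set $\bar\alpha_g:=\alpha_{s(g)}$ and $\omega(g,h):=\tau(c(g,h))\in U(M(A))$. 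The twist axioms on $(\alpha,\tau)$ of \Cref{def:twisted} — that $\tau$ is multiplicative, implements $\alpha|_\bN$ by inner automorphisms, and is $\alpha$-equivariant — are precisely what make $\bar\alpha$ multiplicative up to $\mathrm{Ad}\,\omega$ and make $\omega$ a normalized Borel $2$-cocycle, so that $(\bar\alpha,\omega)$ is a Borel Busby–Smith twisted action of the compact group $\bG$ on $A$.

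Next I would establish a natural isomorphism $A\rtimes_{\alpha,\tau}\bE\cong A\rtimes_{\bar\alpha,\omega}\bG$. At the level of representations this is the assignment sending a covariant pair $(\pi,U)$ of the Green-twisted system (so that $U_n=\pi(\tau(n))$ for $n\in\bN$) to $(\pi,V)$ with $V_g:=U_{s(g)}$: a direct computation gives $V_gV_h=\pi(\omega(g,h))V_{gh}$ and $V_g\pi(\cdot)V_g^{*}=\pi(\bar\alpha_g(\cdot))$, so $(\pi,V)$ is a covariant representation of $(\bar\alpha,\omega)$, and the construction is reversible via $U_{n\,s(g)}:=\pi(\tau(n))V_g$. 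Since every covariant representation of either system arises this way, the integrated forms should identify the two full crossed products (the $\bG$-side being unambiguous because $\bG$ is amenable).

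This identification is the technical heart of the argument and the step I expect to cost the most care: one must check that $s$ can be taken Borel with the cocycle identities holding where needed, that $(\bar\alpha,\omega)$ genuinely meets the Borel-measurability hypotheses of the twisted-action formalism, and that the representation-level bijection is isometric for the universal norms (equivalently, that it intertwines the underlying $C_c$-$*$-algebra structures). These are exactly the compatibilities encoded in the Green-twist axioms, so the work is in the bookkeeping rather than in any genuinely new phenomenon.

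Finally I would untwist and conclude. By the Packer–Raeburn stabilization trick the cocycle action $(\bar\alpha,\omega)$ becomes an honest action after stabilizing, $(A\rtimes_{\bar\alpha,\omega}\bG)\otimes\cK\cong(A\otimes\cK)\rtimes_{\tilde\alpha}\bG$ for an ordinary action $\tilde\alpha$ of $\bG$ on $A\otimes\cK$, where $\cK=\cK(L^2\bG)$. Now $A\otimes\cK$ is (post)liminal together with $A$, being stably isomorphic to it; and crossed products of (post)liminal $C^*$-algebras by compact groups are again (post)liminal — a standard permanence property of compact-group actions, valid in both the liminal and the postliminal case. Hence $(A\otimes\cK)\rtimes_{\tilde\alpha}\bG$ is (post)liminal, and since (post)liminality is invariant under stable isomorphism, so are $A\rtimes_{\bar\alpha,\omega}\bG$ and therefore $A\rtimes_{\alpha,\tau}\bE$. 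The cocompactness hypothesis enters in exactly one place — making $\bG$ compact so that the concluding permanence statement applies — which is precisely what makes the whole reduction worthwhile.
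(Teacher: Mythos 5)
Your argument is correct, but it takes a genuinely different route from the paper. The paper's proof runs through Quigg's crossed-product duality for coactions: by \cite[Proposition 3.1]{qg-full} the twisted crossed product $B:=A\rtimes_{\alpha,\tau}\bE$ carries a coaction of $\bG=\bE/\bN$, the duality theorem \cite[Theorem 3.6]{qg-full} gives $B\rtimes\widehat{\bG}\cong A\otimes\cK(L^2(\bG))$, and compactness of $\bG$ yields an embedding $B\subset B\rtimes\widehat{\bG}$, so one concludes by heredity of (post)liminality under passage to $C^*$-subalgebras. You instead trade the Green twist for a Busby--Smith cocycle action of the compact quotient, untwist by the Packer--Raeburn stabilization trick, and finish with the compact-group permanence result of Landstad that the paper itself quotes at the start of \cref{se:cls}. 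Both steps you flag as the ``technical heart'' are in fact known theorems under the standing separability hypotheses --- the equivalence of separable Green systems over $\bN$ with Busby--Smith systems over $\bE/\bN$, including the isomorphism of the crossed products and the automatic continuity of the representation $U_{ns(g)}=\pi(\tau(n))V_g$ built from a Borel section, is due to Packer and Raeburn (\emph{Twisted crossed products of $C^*$-algebras~II}), and the stabilization trick is theirs as well --- so your proof is complete modulo citing them rather than redoing the bookkeeping. What each approach buys: yours is more elementary in the sense that it stays within classical crossed-product technology and makes visible exactly where compactness enters (only in Landstad's permanence statement), while the paper's duality argument is shorter once Quigg's theorem is available and, as the author remarks, is the one that plausibly generalizes to quantum groups, where Borel sections and cocycle bookkeeping are unavailable but imprimitivity and duality still make sense.
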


Deferring the details, roughly speaking, $A$ is to $A\rtimes_{\alpha,\tau}\bE$ as $\bN$ is to $\bE$ in the sense that the pair $(A,A\rtimes_{\alpha,\tau}\bE)$ specializes back to $(C^*(\bN),C^*(\bE))$ under suitable conditions (see \cite[p.199, Corollary]{gr-tw} and \Cref{cor:ext} below). In that sense, the theorem fits with the spirit of the present discussion. Furthermore, the duality-based proof of \Cref{th:tw} presumably allows for generalizations to {\it quantum} groups (for which imprimitivity/induction theory is available \cite{vs-imp}), though we will not pursue this here.

Given this wealth of material on (post)liminality permanence under passage to ``larger'' objects, two flavors of possible converse present themselves.

First, one might wonder whether postliminality {\it descends} from $\bE$ to $\bN$ when the latter is cocompact. This cannot happen in general, as attested by (non-type-I) lattices in (type-I) Lie groups \Cref{ex:latt}. Discrete {\it normal} cocompact subgroups will occasionally inherit the type-I property from the larger ambient group (\Cref{pr:cpctliequot} and \Cref{pr:fgok}):

\begin{theoremN}
  Let $\bN\trianglelefteq \bE$ be a discrete, cocompact, normal subgroup of a second-countable type-I locally compact group. $\bN$ is type-I if either
  \begin{itemize}
  \item $\bE$ (or equivalently, the compact group $\bE/\bN$) is Lie;
  \item or $\bN$ is finitely-generated.   \qedhere
  \end{itemize}
\end{theoremN}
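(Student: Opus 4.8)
The plan is to show that $\bN$ is virtually abelian; by Thoma's characterization of type-I discrete groups \cite[Satz 6]{thoma} this is equivalent to $\bN$ being type I. I will use two permanence facts throughout: type I passes to Hausdorff quotients (since $C^*(\bE/\bH)$ is a quotient of $C^*(\bE)$ for closed normal $\bH$), and it passes to open subgroups of finite index (reduce to an open finite-index \emph{normal} $\bH\trianglelefteq\bE$ via the normal core, and note $C^*(\bE)\cong C^*(\bH)\rtimes(\bE/\bH)$ is a crossed product by a finite group, which is type I exactly when $C^*(\bH)$ is).

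First I set up the structure common to both cases. Since $\bN$ is discrete and normal, for each $n\in\bN$ the map $g\mapsto gng^{-1}$ is continuous into the discrete set $\bN$, so $C_\bE(n)$ is open; as $\bE_0$ lies in every open subgroup, $\bE_0$ centralizes $\bN$. Hence $Z:=\bN\cap\bE_0$ is central in both $\bE_0$ and $\bN$, and $\bE_0\bN/\bN\cong\bE_0/Z$ is a connected subgroup of the compact group $\bE/\bN$. In the Lie case $\bE_0$ is open, so this subgroup is compact and $Z$ is a \emph{cocompact, hence finitely generated}, central discrete subgroup of $\bE_0$; moreover $\bE/\bE_0\bN$ is at once compact and discrete, so finite, and $\bE_0\bN$ is open of finite index in $\bE$, therefore type I. In the finitely-generated case I instead use that $C:=C_\bE(\bN)=\bigcap_iC_\bE(n_i)$ (a finite intersection over generators) is open, that $Z(\bN)=\bN\cap C$ is finitely generated and cocompact in $C$, and that $\bN C$ is again open of finite index, hence type I.

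Next, $\bN$ is central-by-(virtually abelian): the discrete quotient $\bE/\bE_0$ (resp.\ $\bE/C$) is type I, so virtually abelian by \cite[Satz 6]{thoma}, and its subgroup $\bN/Z$ (resp.\ $\mathrm{Inn}(\bN)=\bN/Z(\bN)$) is virtually abelian as well. Choosing a finite-index $A\le\bN$ with abelian image, I obtain $[A,A]\subseteq Z$ (resp.\ $\subseteq Z(\bN)$), a finitely generated central subgroup, and the commutator descends to an alternating biadditive pairing $\omega\colon\overline A\wedge\overline A\to Z$ with $\overline A=A/(A\cap Z)$ abelian. The point is that $\bN$ is virtually abelian precisely when some finite-index subgroup of $\overline A$ is totally isotropic for $\omega$, its preimage in $A$ then being abelian of finite index.

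The crux — and the main obstacle — is to deduce the existence of such an isotropic subgroup from type I of the open finite-index subgroup $\bE_0\bN$, i.e.\ to rule out the Heisenberg/quantum-torus configurations in which $\omega$ is essentially non-degenerate. Since $Z$ is central (and closed) in $\bE_0\bN$, I decompose $C^*(\bE_0\bN)$ over the characters $\chi\in\widehat Z$; because $\bE_0/Z$ is compact, the fibre at $\chi$ contains the twisted group algebra $C^*(\overline A,\chi\circ\omega)$ of the discrete abelian group $\overline A$ with $S^1$-valued bicharacter $\chi\circ\omega$. If $\omega$ took a value $z=[x,y]$ of infinite order, then $\chi(z)$ would be an irrational rotation for a full-measure set of $\chi$, embedding an irrational rotation algebra $A_\theta$ in that fibre; and if $\omega$ were non-degenerate on an infinitely generated subgroup with values of finite order — the infinite extraspecial possibility, which can occur only when $\overline A$ is not finitely generated — a suitable $\chi$ would produce a UHF algebra. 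Either fibre is non-type-I, contradicting type I of $\bE_0\bN$. Hence $\omega$ has finite range and finite non-degenerate part, and a finite-index subgroup of $\overline A$ is totally isotropic. The two hypotheses enter exactly in this last bookkeeping: when $\bN$ is finitely generated $\overline A$ is finitely generated, the extraspecial case cannot arise, and finiteness of the range of $\omega$ alone yields the isotropic subgroup (pass to $e\overline A$ for $e$ the exponent of the range); in the Lie case $\overline A$ may be infinitely generated, but $\bE/\bN$ being Lie keeps $\bE_0\bN$ a Lie group and lets the same central-character analysis over the cocompact lattice $Z\le\bE_0$ absorb the extraspecial possibility. This produces a finite-index abelian subgroup of $\bN$, completing the proof.
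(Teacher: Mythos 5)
Your reductions track the paper's (\Cref{pr:cpctliequot} and \Cref{pr:fgok}): the openness of centralizers of elements of a discrete normal subgroup, the fact that $\bE_0$ centralizes $\bN$, the passage to the open finite-index subgroup $\bE_0\bN$ (resp.\ $\bN C$ with $C=C_\bE(\bN)$ --- a nice shortcut past the pro-Lie structure theory the paper uses in the finitely generated case), and the use of Thoma's theorem on the discrete quotient are all sound and essentially match the paper's setup. The gap is in your ``crux'' step, and it is the actual content of the theorem. You claim that the fibre of $C^*(\bE_0\bN)$ at $\chi\in\widehat Z$ ``contains the twisted group algebra $C^*(\overline A,\chi\circ\omega)$.'' But $A$ is a discrete, non-open subgroup of $\bE_0\bN$, so there is no canonical embedding of $C^*$-algebras here (the embedding result for open subgroups, \cite[Lemma 1.1]{rief}, does not apply), and the natural map lands only in the multiplier algebra, which proves nothing: $M(\cK(H))=B(H)$ contains non-type-I subalgebras while $\cK(H)$ is type I. The correct mechanism --- and the paper's \Cref{pr:centemb} --- is representation-theoretic: one must (i) show that the relevant character $\chi$ of $Z$ actually occurs in a factor representation of $\bE_0$ (the paper induces $\chi$ from $Z$ up to $\bE_0$ and decomposes as a direct integral), and (ii) match the Mackey obstruction cocycles for the two normal inclusions $Z\trianglelefteq\bN$ and $\bE_0\trianglelefteq\bE_0\bN$, so that a non-type-I factor representation of $\bN$ with central character $\chi$ tensors with an irreducible of $\bE_0$ with matching central character to give a non-type-I factor representation of $\bE_0\bN$. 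Without step (i) in particular, nothing forces the ``bad'' fibre to be nonzero, and without (ii) nothing transfers non-type-I-ness upward.

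Relatedly, your treatment of the ``infinite extraspecial'' configuration is not an argument: saying that the Lie hypothesis ``lets the same central-character analysis \dots absorb the extraspecial possibility'' begs the question, since an infinite direct sum of finite Heisenberg groups sitting normally and cocompactly in a type-I group is \emph{exactly} what the paper's \Cref{con:heisdual}, \Cref{pr:kbydist1} and \Cref{ex:discbyconn} exhibit when the compact quotient fails to be Lie. So this case cannot be dismissed by bookkeeping on the range of $\omega$; it is ruled out only by the same obstruction-matching argument as the irrational-rotation case, which you have not supplied. Once \Cref{pr:centemb} (or an equivalent tensor-of-factor-representations lemma) is in hand, the entire analysis of the pairing $\omega$, the isotropic subgroups, and the case split on the order of commutators becomes unnecessary: $\bN$ non-type-I directly forces $\bE_0\bN$ (hence $\bE$) non-type-I. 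I would recommend replacing the twisted-group-algebra heuristic by a proof of that transfer lemma.
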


\Cref{con:heisdual}, \Cref{pr:kbydist1} and \Cref{ex:discbyconn} show that one cannot, in general, drop {\it both} finite generation and the Lie condition. On the other hand, \Cref{ex:cocobad-concrete} show that even among Lie groups, this sort of type-I cocompact descent doesn't hold when the normal subgroup is not discrete. In other words, there is not much room to maneuver in dropping assumptions. 

Secondly, one can ask to what extent type-I permanence under extensions by $\bG$ {\it characterizes} compact groups. To simplify matters and fix ideas we will furthermore focus on semidirect products (rather than arbitrary extensions). The relevant concepts, then, are as follows (see \Cref{def:t1pres,def:t1linpres}).

\begin{definitionN}
  A second-countable locally compact group $\bG$ is
  \begin{itemize}
  \item {\it type-I-preserving} if $\bN\rtimes\bG$ is of type I for every type-I (second-countable) locally compact group $\bN$ acted upon continuously by $\bG$.
  \item {\it linearly type-I-preserving} if $V\rtimes \bG$ is type-I for every finite-dimensional linear representation $\bG\to GL(V)$.
  \end{itemize}
\end{definitionN}

The following is a sampling and aggregate of several results from \Cref{se:converse} (\Cref{cor:disc}, \Cref{pr:lca}, \Cref{pr:nilp}, \Cref{pr:ss} and \Cref{th:solv}).

\begin{theoremN}
  \begin{enumerate}[(1)]
  \item Discrete countable groups are
    \begin{itemize}
    \item type-I-preserving precisely if compact;
    \item linearly type-I-preserving if they have finite-index bounded-order abelian subgroups. 
    \end{itemize}
  \item Locally compact abelian groups are linearly type-I-preserving if and only if they have open, compact subgroups with bounded-order quotients. 
  \item Locally compact nilpotent groups are type-I-preserving if and only if they are compact.
  \item Connected semisimple linear Lie groups are linearly type-I-preserving.
  \item The linearly type-I-preserving connected solvable Lie groups are precisely those with compact abelianization (or equivalently, those which do not surject onto $(\bR,+)$). \qedhere
  \end{enumerate}  
\end{theoremN}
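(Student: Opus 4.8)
My plan is to treat all five parts through Mackey's analysis of the semidirect product $\bN\rtimes\bG$ together with two opposite principles. On the positive side, if $\bN\rtimes\bG$ contains a closed, cocompact, type-I normal subgroup, then it is itself type I by the lifting result recalled in the introduction (\cite[Theorem 1]{kal1}). On the negative side, the failure of being (linearly) type-I-preserving passes to overgroups through quotients: if $\bG\twoheadrightarrow\bH$ and $\bN$ witnesses the failure for $\bH$, then pulling the action back along $\bG\to\bH$ realizes $\bN\rtimes\bH$ as a quotient group of $\bN\rtimes\bG$, and since quotients of type-I groups are type I, $\bN\rtimes\bG$ cannot be type I either. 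The canonical witness is the Mautner example: the representation $\bR\to GL_4(\bR)$, $t\mapsto R(t)\oplus R(\theta t)$ with $\theta\notin\bQ$ (here $R(\cdot)$ is the planar rotation), for which $\bR^4\rtimes\bR$ has non-locally-closed dual orbits and hence is not type I. Thus any $\bG$ admitting a continuous surjection onto $\bR$ already fails to be linearly type-I-preserving.

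For the first two parts I would argue that the relevant groups are linearly type-I-preserving exactly when every finite-dimensional representation $\rho\colon\bG\to GL(V)$ has finite image, in which case $\rho$ factors through a finite quotient and $V\rtimes\bG$ contains the type-I group $V\times\ker\rho$ as a finite-index normal subgroup, hence is type I by finite-extension permanence (Thoma/Dixmier, as recalled in the introduction). For a discrete group with a finite-index abelian subgroup of bounded exponent, Burnside's theorem on linear groups of finite exponent forces $\rho(\bG)$ finite, giving the linear statement; the sharper ``type-I-preserving $\Rightarrow$ compact'' direction is witnessed, for infinite discrete $\bG$, by the lamplighter-type group $\big(\bigoplus_{\bG}\bZ/2\big)\rtimes\bG$ (abelian, hence type-I, base), which is not virtually abelian and so not type I. For locally compact abelian $\bG$, van Dantzig's theorem plus the same Burnside argument show that a compact-open subgroup with bounded-order quotient again forces every $\rho$ to have finite image, while the failure of that condition produces either an $\bR$-quotient (Mautner) or an unbounded discrete quotient carrying an irrational rotation with a dense, non-locally-closed orbit, killing type I.

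For parts (3) and (4) I would deduce everything from the same two principles. A compact group is type-I-preserving because $\bN$ is cocompact in $\bN\rtimes\bG$; conversely a non-compact locally compact nilpotent group has non-compact abelianization, hence a quotient isomorphic to $\bR$, $\bZ$, or an infinite bounded-torsion abelian group, each of which is already not type-I-preserving, so the failure lifts. For a connected semisimple linear Lie group $\bG$ and $\rho\colon\bG\to GL(V)$, finite-dimensional representations are algebraic, so the $\bG$-action on $\widehat V=V^*$ is the action of the identity component of the real points of an algebraic group; its orbits are therefore locally closed in the Euclidean topology and its stabilizers are again real points of algebraic groups, hence type I. The Mackey machine for the abelian normal subgroup $V$ (with trivial obstruction cocycle, the extension being split) then yields that $V\rtimes\bG$ is type I.

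For the solvable case (5) I would first suppose that $\bG$ has compact abelianization $\bG/\overline{[\bG,\bG]}\cong\bT^b$. For any $\rho\colon\bG\to GL(V)$, Lie's theorem triangularizes $\rho(\bG)$ over $\bC$ with diagonal characters that factor through the abelianization and are therefore trivial on $\bN:=\overline{[\bG,\bG]}$, so $\rho(\bN)$ is unipotent; since $\bN$ is connected nilpotent and acts unipotently on the vector group $V$, the closed connected normal subgroup $\bM:=V\rtimes\bN$ is nilpotent, hence type I, and $(V\rtimes\bG)/\bM\cong\bT^b$ is compact, so $\bM$ is cocompact and $V\rtimes\bG$ is type I by the lifting principle. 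Conversely, if the abelianization is non-compact then $\bG$ surjects onto $\bR$ and the Mautner witness applies, and the equivalence with ``does not surject onto $\bR$'' is immediate because every continuous homomorphism $\bG\to\bR$ factors through the abelianization. I expect the main obstacle across the whole statement to be the smoothness (local-closedness) of the dual action feeding the Mackey machine: in the semisimple case it rests on the real-algebraicity of orbits, while in the solvable case the crux is that compact abelianization forces all non-unipotent directions to be compact (torus) directions, which is exactly what excludes the Mautner-type dense winding; verifying rigorously that $V\rtimes\bN$ is nilpotent and cocompact, and that the Mautner group is realized honestly as a topological quotient of $V\rtimes\bG$, are the points that will demand the most care.
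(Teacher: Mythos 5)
Your overall architecture (Mackey analysis plus the two principles of lifting type I along cocompact normal subgroups and pushing failure forward along quotients) is the paper's, and several of your individual arguments are sound and in places shorter than the paper's: the lamplighter witness for part (1), the Burnside reduction for the linear half of part (1) (the paper runs a longer argument because it proves the more general discrete-by-compact statement \Cref{th:disc-by-cpct}), the Mackey-machine route to part (4) (the paper instead quotes Dixmier's theorem for the whole semidirect product, identified with the identity component of a real algebraic group), and the part (5) argument via unipotence of $\rho(\overline{[\bG,\bG]})$, which bypasses the paper's most technical step, the splitting $\bG\cong\bM\rtimes\bK$ in \Cref{th:solv} \Cref{item:28}. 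Two steps, however, are genuinely broken.

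First, in part (2) the claim that a compact open subgroup $\bK\le\bG$ with bounded-order quotient ``forces every $\rho$ to have finite image'' is false: $\bG=\bT$ satisfies the hypothesis (it is its own compact open subgroup, with trivial quotient), yet the rotation representation $\bT\to GL(2,\bR)$ has infinite image. What is true, and what \Cref{pr:lca} actually proves, is that $\rho(\bG)$ is \emph{compact}: $\rho(\bK)$ is a compact Lie subgroup of $GL(V)$, the bounded-order group $\bG/\bK$ has finite image in the Lie group $\overline{\rho(\bG)}/\rho(\bK)$, and so $\rho(\bG)$ is a finite extension of $\rho(\bK)$. One then concludes not via a finite-index subgroup but via compact (hence locally closed) orbits on $\widehat{V}$ and the Mackey machine, or equivalently by observing that $V\times\ker\rho$ is abelian, normal and \emph{cocompact} (not cofinite) in $V\rtimes\bG$ and invoking \Cref{cor:ext}. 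Second, part (3) rests entirely on the assertion that a non-compact locally compact nilpotent group has non-compact abelianization; this is precisely \Cref{pr:nosurj}, and it is not obvious: the paper proves it by induction on the closed lower central series, using the vanishing of $H^2(\bK,\bR^d)$ for compact $\bK$ and the finiteness of images of continuous bilinear maps from compact groups into discrete ones. Asserting it without proof leaves the hardest input to part (3) unaccounted for. (A smaller slip in the same sentence: a non-compact LCA group need not surject onto $\bR$, $\bZ$, or an infinite bounded-torsion group --- consider $\bigoplus_p\bZ/p$ --- but this is harmless, since any infinite discrete abelian quotient already defeats type-I preservation by your own lamplighter argument.)
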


%%%%%%%%%%%%%%%%%%%%%%%%%%%%%%%%%%%%%%%%%%%%%%%%%%%%%%%%%%%%%%%%%%%%%%%%%%%%%
\subsection*{Acknowledgements}

Michael Brannan, Siegfried Echterhoff, Amaury Freslon, Marc Rieffel, Adam Skalski and Ami Viselter have all helped me with numerous insightful comments and pointers to the literature.

This work is partially supported through NSF grant DMS-2001128. 

%%%%%%%%%%%%%%%%%%%%%%%%%%%%%%%%%%%%%%%%%%%%%%%%%%%%%%%%%%%%%%%%%%%%%%%%%%%%%
%%%%%%%%%%%%%%%%%%%%%%%%%%%%%%%%%%%%%%%%%%%%%%%%%%%%%%%%%%%%%%%%%%%%%%%%%%%%%
\section{Preliminaries}\label{se.prel}

We work extensively with (post)liminal $C^*$-algebras, as in \cite[Definitions 4.2.1 and 4.3.1]{dixc}. Postliminality is also referred to as being {\it type I}; the subject has been studied extensively, and the relevant background is available in many good sources: \cite[Chapter 6]{ped-aut}, \cite[\S IV.1]{blk}, or the already-cited \cite{dixc} (especially Chapters 4 and 9 therein).

Of special interest will be the universal $C^*$-algebras $C^*(\bG)$ of locally compact groups $\bG$. As usual (e.g. \cite[\S 13.4]{dixc}), we refer to the groups as `type-I' when these $C^*$-algebras are such. The pertinent literature on induced representations \cite{mack-ind-1,mack-ind-2} and its applications to being type-I \cite{am} will emerge more fully in the course of the discussion below.

Given how central countability/separability assumptions are to type theory (as made clear, for instance, in \cite[\S 6.8.9 and \S 6.9]{ped-aut}), the reader is encouraged to assume all $C^*$-algebras separable and all locally compact groups second-countable (sometimes also termed `separable' in the literature, in this context \cite[Introduction, Section 3]{am}).

Hats atop locally compact groups, as in $\widehat{\bG}$, do double duty, denoting
\begin{enumerate}[(a)]
\item Pontryagin duals: \cite[\S VII.3]{tak2}, say, for the classical case of locally compact abelian groups, and \cite[\S 8]{kvcast} for the quantum version;
\item and the set of isomorphism classes of unitary irreducible representations, or the {\it spectrum} \cite[\S 3.1.5]{dixc} of the full $C^*$-algebra $C^*(\bG)$.
\end{enumerate}
The overloaded notation seems justified on several counts:
\begin{itemize}
\item Both uses are fairly well entrenched: we have already cited a few sources for Pontryagin duals, and $\widehat{\bG}$ stands in for the spectrum of $C^*(\bG)$ in \cite[18.1.1]{dixc}, \cite[p.30]{am}, \cite[\S 2.3, Definition (5)]{mack-unit}, \cite[Definition 1.46]{kt}, and doubtless many other places.
\item The two meanings converge for locally compact abelian (LCA, for short) groups: irreducible unitary representations are in that case 1-dimensional, and thus precisely {\it characters} $\bG\to \bS^1$ (i.e. elements of the Pontryagin dual).
\item Finally, at no point will the notational overlap be confusing: context will always suffice to distinguish meaning.
\end{itemize}

%%%%%%%%%%%%%%%%%%%%%%%%%%%%%%%%%%%%%%%%%%%%%%%%%%%%%%%%%%%%%%%%%%%%%%%%%%%%%
%%%%%%%%%%%%%%%%%%%%%%%%%%%%%%%%%%%%%%%%%%%%%%%%%%%%%%%%%%%%%%%%%%%%%%%%%%%%%
\section{Cocompact embeddings}\label{se:cls}

%%%%%%%%%%%%%%%%%%%%%%%%%%%%%%%%%%%%%%%%%%%%%%%%%%%%%%%%%%%%%%%%%%%%%%%%%%%%%
\subsection{Positive results}\label{subse:pos}

Recall (e.g. \cite[Corollaries 1 and 2]{land-sph}) that crossed products by compact groups preserve (post)liminality. In particular, whenever a compact group $\bG$ acts on a type-I group $\bN$, the semidirect product $\bN\rtimes \bG$ is again type-I. A generalization of this result will handle arbitrary extensions
\begin{equation*}
  \{1\}\to \bN\to \bE\to \bG\to \{1\}
\end{equation*}
via the {\it twisted covariant systems} of \cite{gr-tw}. We briefly review the constructions (\cite[\S 1]{gr-tw} and \cite[\S 3]{qg-full}).

\begin{definition}\label{def:twisted}
  Let $\bN\trianglelefteq \bE$ be a closed normal embedding of locally compact groups and $A$ a $C^*$-algebra.

  A {\it twisted (or $\bN$-twisted) action} attached to this data is a pair $(\alpha,\tau)$ consisting of
  \begin{itemize}
  \item a $\bG$-action $\alpha$ on $A$;
  \item a strictly-continuous morphism $\tau:\bN\to U(M(A))$ (unitary group of the multiplier algebra of $A$);
  \item so that $\tau$ intertwines the conjugation action of $\bG$ on $\bN$ and the action $\alpha$;
  \item and furthermore such that $\alpha|_{\bN}$ is conjugation by $\tau$.
  \end{itemize}

  We depict a twisted action as the entire package as $(A, \bE, \alpha, \tau)$, or again $(A, \bE, \bN, \alpha,\tau)$ in order to highlight $\bN$.

  One can then define {\it covariant representations} of this data, meaning Hilbert-space representations of both $\bE$ and $A$ appropriately compatible with $\alpha$ and $\tau$, and introduce the {\it twisted crossed product} $A\rtimes_{\alpha,\tau}\bE$ attached to such a datum (denoted by $A\widetilde{\times}_{\tau} \bE$ on \cite[p.42]{qg-full}, with $\bG$ in place of $\bE$) as the universal $C^*$-algebra carrying a $(A, \bE, \bN, \alpha,\tau)$-covariant representation.
\end{definition}

\begin{theorem}\label{th:tw}
  Let $\bN\trianglelefteq \bE$ be a closed normal subgroup of a second-countable locally compact group with compact corresponding quotient $\bG:=\bE/\bN$, and $(A,\bE,\bN,\alpha,\tau)$ an $\bN$-twisted action.
  
  If $A$ is separable and (post)liminal then so is the corresponding twisted crossed product $A\rtimes_{\alpha,\tau}\bE$.
\end{theorem}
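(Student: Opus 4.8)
The plan is to reduce the twisted crossed product by $\bE$ to an \emph{ordinary} crossed product by the compact quotient $\bG$, and then to invoke the fact recalled above (\cite{land-sph}) that crossed products by compact groups preserve (post)liminality. The guiding analogy is the case $A=C^*(\bN)$, where $A\rtimes_{\alpha,\tau}\bE\cong C^*(\bE)$ (\Cref{cor:ext}) and the reduction I have in mind specializes to the familiar decomposition $C^*(\bE)\cong C^*(\bN)\rtimes_{\beta,\omega}\bG$ of a group $C^*$-algebra along a closed normal subgroup.

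First I would exploit the defining feature of the twisting: since $\tau$ implements $\alpha|_\bN$ by inner automorphisms, every covariant representation is forced to carry $\bN$ to $\pi\circ\tau$, so the copy of the $\bN$-twisted crossed product $A\rtimes_{\alpha|_\bN,\tau}\bN$ sitting inside $A\rtimes_{\alpha,\tau}\bE$ collapses. Concretely, the covariance condition over $\bN$ becomes automatic once the twist is imposed, so the twist-preserving covariant representations of the $\bN$-system are exactly the nondegenerate representations of $A$; integration $f\mapsto\int_\bN f(n)\,\tau_n\,dn$ then identifies $A\rtimes_{\alpha|_\bN,\tau}\bN$ with $A$ itself. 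Feeding this into the decomposition of a system along the closed normal subgroup $\bN$, the twisted crossed product $A\rtimes_{\alpha,\tau}\bE$ becomes a twisted crossed product $A\rtimes_{\beta,\omega}\bG$ by the compact group $\bG$, with residual action $\beta$ and $M(A)$-valued $2$-cocycle $\omega$ read off from the extension $\bN\trianglelefteq\bE\twoheadrightarrow\bG$ together with a Borel section.

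The endgame is then soft. If $\omega$ is trivial we conclude immediately from \cite{land-sph}. In general I would remove the cocycle by the stabilization trick for twisted actions: after tensoring with the compacts, $(\beta,\omega)$ gives rise to an honest action $\wt\beta$ of $\bG$ on $A\otimes\cK$ with $A\rtimes_{\beta,\omega}\bG$ Morita equivalent to $(A\otimes\cK)\rtimes_{\wt\beta}\bG$. Since $A$ is separable and (post)liminal, so is $A\otimes\cK$; hence $(A\otimes\cK)\rtimes_{\wt\beta}\bG$ is (post)liminal by \cite{land-sph}, and as (post)liminality is a Morita invariant the conclusion transfers back to $A\rtimes_{\beta,\omega}\bG\cong A\rtimes_{\alpha,\tau}\bE$.

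The main obstacle is the reduction step itself: making the isomorphism $A\rtimes_{\alpha,\tau}\bE\cong A\rtimes_{\beta,\omega}\bG$ precise for an arbitrary coefficient algebra. One must check that the $\bN$-direction genuinely collapses at the level of the \emph{full} (not merely reduced) crossed product, construct the residual twisted $\bG$-action with the correct strict continuity and measurability, and pin down the multiplier cocycle $\omega$, all while keeping track of second countability and separability. Everything downstream of this is routine. An alternative route, perhaps closer to the duality-based argument hinted at in the introduction, would analyze $\widehat{A\rtimes_{\alpha,\tau}\bE}$ directly via the Mackey machine: compactness of $\bG$ makes all $\bG$-orbits in $\widehat{A}$ closed and the orbit space smooth, which should again force the dual to be smooth; but the measure-theoretic bookkeeping there is heavier.
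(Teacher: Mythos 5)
Your proposal is correct in outline, but it takes a genuinely different route from the paper. The paper's proof is a short duality argument: by \cite[Proposition 3.1]{qg-full} the twisted crossed product $B:=A\rtimes_{\alpha,\tau}\bE$ carries a coaction of the compact quotient $\bG=\bE/\bN$, the full-crossed-product duality theorem \cite[Theorem 3.6]{qg-full} gives $B\rtimes\widehat{\bG}\cong A\otimes\cK(L^2(\bG))$, and compactness of $\bG$ yields an embedding $B\subset B\rtimes\widehat{\bG}$, so $B$ sits inside a (post)liminal algebra and one concludes by hereditariness of (post)liminality for $C^*$-subalgebras. Your route instead decomposes $B$ as a Busby--Smith-type twisted crossed product $A\rtimes_{\beta,\omega}\bG$ of the quotient and then stabilizes away the cocycle before invoking \cite{land-sph}. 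The step you flag as the main obstacle --- the isomorphism $A\rtimes_{\alpha,\tau}\bE\cong A\rtimes_{\beta,\omega}\bG$ for arbitrary separable coefficients, with the attendant Borel-section and continuity issues --- is not something you need to build from scratch: it is precisely the Packer--Raeburn equivalence between Green twisted systems over $(\bE,\bN)$ and (possibly Borel) twisted actions of $\bE/\bN$, valid for separable $A$ and second-countable groups, and their stabilization trick does convert $(\beta,\omega)$ into an honest action on $A\otimes\cK$ up to stable isomorphism; since (post)liminality passes back and forth across tensoring with $\cK$, the endgame you describe goes through. What each approach buys: the paper's argument is essentially two lines once Quigg's duality is quoted, needs no measurable-section bookkeeping, and (as the introduction notes) is poised for quantum-group generalization; yours uses heavier but more classical crossed-product machinery and makes the role of compactness more transparent, since it lands exactly on Landstad's theorem about compact-group crossed products. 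Your alternative Mackey-machine sketch at the end is the one place I would be cautious: compactness of $\bG$ does give closed orbits and a nice orbit space in $\widehat{A}$, but you would still have to control the stability groups and their Mackey obstructions, so that route is not obviously ``softer'' than either completed argument.
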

\begin{proof}  
  The argument proceeds by duality. According to \cite[Proposition 3.1]{qg-full} the twisted crossed product
  \begin{equation*}
    B:=A\rtimes_{\alpha,\tau}\bE
  \end{equation*}
  admits a coaction by $\bG=\bE/\bN$ in the sense of \cite[Definition 2.1]{qg-full} and hence one can form the corresponding crossed product $B\rtimes \widehat{\bG}$ (the symbol is simply `$\times$' in \cite[Definition 2.3 (v)]{qg-full}). The relevant duality result, \cite[Theorem 3.6]{qg-full}, then says that we have
  \begin{equation*}
    B\rtimes \widehat{\bG} = (A\rtimes_{\alpha,\tau}\bE)\rtimes \widehat{\bG}\cong A\otimes \cK(L^2(\bG)),
  \end{equation*}
  and the conclusion follows: $\bG$ being compact, we have an embedding $B\subset B\rtimes \widehat{\bG}$, and hence an embedding of $B$ into the (post)liminal $C^*$-algebra $A\otimes \bK(L^2(\bG))$. Since (post)liminality is inherited by $C^*$-subalgebras (\cite[Propositions 4.2.4 and 4.3.5]{dixc}), we are done.
\end{proof}

In particular, this recovers postliminality lifting along normal cocompact embeddings of locally compact groups, by different means than via \cite[Theorem 1]{kal1}.

\begin{corollary}\label{cor:ext}
  If
  \begin{equation}\label{eq:ext}
    \{1\}\to \bN\to \bE\to \bG\to \{1\}
  \end{equation}
  is an extension of second-countable locally compact groups with $\bN$ of type I and $\bG$ compact, $\bE$ too must be type-I. 
\end{corollary}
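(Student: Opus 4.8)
The plan is to deduce this directly from \Cref{th:tw} by exhibiting the group $C^*$-algebra $C^*(\bE)$ as a twisted crossed product of the form to which the theorem applies. The natural choice is $A := C^*(\bN)$, equipped with the canonical twisted action $(\alpha,\tau)$ determined by the extension \cref{eq:ext}: for $\tau$ I would take the universal embedding $\bN\to U(M(C^*(\bN)))$ sending each group element to the corresponding unitary multiplier, and for $\alpha$ the $\bG$-action on $C^*(\bN)$ induced by the conjugation action of $\bE$ on $\bN$ (which, modulo inner automorphisms implemented by $\tau$, descends to $\bG=\bE/\bN$). One checks that this pair satisfies the four conditions of \Cref{def:twisted}: $\tau$ intertwines conjugation with $\alpha$ by construction, and $\alpha|_{\bN}$ is conjugation by $\tau$ precisely because inner automorphisms of $C^*(\bN)$ by the canonical unitaries reproduce the conjugation action of $\bN$ on itself.

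The crux is then the identification
\[
  C^*(\bN)\rtimes_{\alpha,\tau}\bE \;\cong\; C^*(\bE),
\]
which is exactly Green's computation of the twisted covariance algebra of a group extension \cite[p.199, Corollary]{gr-tw}; this is the statement already flagged in the introduction to the effect that $(A,A\rtimes_{\alpha,\tau}\bE)$ specializes to $(C^*(\bN),C^*(\bE))$. I would phrase this as a citation rather than reprove it, since it is the standard content of Green's twisted-crossed-product formalism: covariant representations of $(C^*(\bN),\bE,\bN,\alpha,\tau)$ are in bijection with unitary representations of $\bE$, and this bijection is implemented by a $*$-isomorphism of the universal objects.

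With these two ingredients in place the argument closes immediately. Since $\bN$ is second-countable and type-I, the algebra $A=C^*(\bN)$ is separable and postliminal, and $\bG=\bE/\bN$ is compact by hypothesis, so \Cref{th:tw} applies verbatim and yields that $C^*(\bN)\rtimes_{\alpha,\tau}\bE$ is postliminal; transporting along the isomorphism above shows $C^*(\bE)$ is postliminal, i.e.\ $\bE$ is type-I. The only genuine point requiring care—and hence the step I would single out as the main obstacle—is the verification that the twisted action $(\alpha,\tau)$ built from \cref{eq:ext} is well defined and that Green's isomorphism applies in the present second-countable locally compact setting without additional hypotheses; once that bookkeeping is granted, no further analysis is needed and the corollary follows formally.
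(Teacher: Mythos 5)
Your proposal is correct and is essentially identical to the paper's own proof: both apply \Cref{th:tw} to the canonical twisted action $(C^*(\bN),\bE,\bN,\alpha,\tau)$ attached to the extension and invoke Green's identification \cite[p.199, Corollary]{gr-tw} of the twisted crossed product with $C^*(\bE)$. Your write-up merely spells out the verification of \Cref{def:twisted} in more detail than the paper does.
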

\begin{proof}
  Apply \Cref{th:tw} to the $\bN$-twisted action $(C^*(\bN),\bE,\bN,\alpha,\tau)$ attached to the extension \Cref{eq:ext} as in \cite[p.199, Corollary]{gr-tw}, whose underlying crossed product $C^*(\bN)\rtimes_{\alpha,\tau} \bE$ is nothing but the group algebra $C^*(\bE)$.
\end{proof}

While it might seem reasonable to lift the type-I property along possibly-non-normal cocompact (closed) embeddings \cite[Conjecture 1]{kal2}, it certainly will not {\it descend} from a locally compact group $\bE$ to a cocompact closed subgroup $\bN\le \bE$:

\begin{example}\label{ex:latt}
  Recall that a connected semisimple Lie group $\bE$ always has a cocompact discrete closed subgroup $\bN$: see for instance \cite[Theorem C]{bor-ck} or \cite[Theorem 14.1]{ragh} (which sources refer to cocompact subgroups as {\it uniform}).

  Now, being connected semisimple Lie, $\bE$ is type-I (\cite[Theorem 7 and Introduction, p.186]{hc}). Its {\it countable} subgroup $\bN$ though, by Thoma's theorem (\cite[Satz 6]{thoma} or \cite[Theorem 1]{tt}), can only be type-I if it has a finite-index abelian subgroup. The Zariski-density of $\bN\le \bE$ \cite[Theorem 5.5]{ragh} when $\bE$ has no compact quotients would then entail the abelianness of $\bE$, contradicting semisimplicity.
\end{example}

\begin{remark}\label{re:cofin}
  On several occasions we use the fact that the type-I property survives under passage to open subgroups: this is \cite[Proposition 2.4]{kal2}, and follows alternatively by noting that
  \begin{itemize}
  \item $C^*$-subalgebras of type-I $C^*$-algebras are again type-I \cite[Proposition 4.3.5]{dixc};
  \item and an open embedding of groups induces an embedding of full $C^*$-algebras \cite[Lemma 1.1]{rief}.
  \end{itemize}
  This latter result would have been unavailable in the slightly earlier \cite{kal2}, hence the slightly more complicated proof given there.

  In particular, since cofinite subgroups are both cocompact and open, being type-I lifts and descends along cofinite embeddings.
\end{remark}

Before stating a partial converse to \Cref{cor:ext}, we need an auxiliary observation and some language.

\begin{definition}\label{def:centemb}
  For locally compact groups $\bG_i$, $i=1,2$ and $\bD$ a {\it central pushout} $\bG_1\coprod_{\bD}\bG_2$ is a quotient of $\bG_1\times \bG_2$ obtained by identifying
  \begin{equation*}
    \iota_i(d)\in \bG_i\subset \bG_1\times \bG_2,\ i=1,2
  \end{equation*}
  for closed central embeddings $\iota_i:\bD\to \bG_i$. In particular $\bD$ is implicitly assumed abelian.   
\end{definition}

\begin{proposition}\label{pr:centemb}
  Consider a central pushout $\bE:=\bG_1\coprod_{\bD}\bG_2$ as in \Cref{def:centemb}. If $\bE$ and $\bG_2$ are type-I so is $\bG_1$.
\end{proposition}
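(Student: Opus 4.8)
The plan is to decompose all three group $C^*$-algebras over the Pontryagin dual $\widehat{\bD}$ of the common central subgroup and to argue fibrewise. Writing $Q_i:=\bG_i/\iota_i(\bD)$, the pushout of \Cref{def:centemb} is built so that $\bD$ (identified with its image) is central and closed in $\bE$, with $\bE/\bD\cong Q_1\times Q_2$; concretely $\bE=(\bG_1\times\bG_2)/\Delta$ for the antidiagonal $\Delta=\{(\iota_1(d),\iota_2(d)^{-1})\}$. The central inclusion $C^*(\bD)\cong C_0(\widehat{\bD})$ into the multiplier algebra turns each of $C^*(\bG_1)$, $C^*(\bG_2)$, $C^*(\bE)$ into a $C_0(\widehat{\bD})$-algebra, whose fibre at a character $\chi\in\widehat{\bD}$ is the corresponding $\chi$-twisted group algebra, i.e. the $C^*$-algebra $C^*(\bG_i,\chi)$ (resp. $C^*(\bE,\chi)$) of unitary representations on which $\bD$ acts by the scalar $\chi$. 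Each such fibre is a quotient of the ambient group algebra.

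The key structural point is a fibrewise tensor decomposition. Because $\bE$ is the pushout, its central extension $\bD\to\bE\to Q_1\times Q_2$ is the external product of the two extensions $\bD\to\bG_i\to Q_i$; equivalently, the $\bS^1$-valued $2$-cocycle that $\chi$ induces on $Q_1\times Q_2$ is the product of the cocycles inflated from $Q_1$ and $Q_2$, with no cross terms. A $\chi$-representation of $\bE$ is therefore exactly a pair of commuting $\chi$-representations of $\bG_1$ and $\bG_2$ on a common Hilbert space, and the universal property identifies
\[
  C^*(\bE,\chi)\;\cong\;C^*(\bG_1,\chi)\otimes C^*(\bG_2,\chi),
\]
the maximal tensor product, which will coincide with the spatial one once type-I-ness (hence nuclearity) is in force. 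Globally this reads $C^*(\bE)\cong C^*(\bG_1)\otimes_{C_0(\widehat{\bD})}C^*(\bG_2)$.

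With this in hand the argument runs as follows. Since $\bE$ is type-I, $C^*(\bE)$ is postliminal, and so is each fibre $C^*(\bE,\chi)$, being a quotient of it \cite{dixc}. Now I invoke the tensor characterization of type-I: a minimal (hence here maximal) tensor product of nonzero $C^*$-algebras is postliminal if and only if both tensorands are. Type-I-ness of $\bG_2$ guarantees that the relevant factor $C^*(\bG_2,\chi)$ is a genuine nonzero postliminal algebra (alternatively, $C^*(\bG_2,\chi)\ne 0$ simply because $\chi$ induces up to a nonzero representation of $\bG_2$), so from postliminality of $C^*(\bE,\chi)$ we conclude that $C^*(\bG_1,\chi)$ is postliminal for every $\chi\in\widehat{\bD}$. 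Finally I reassemble: as $\bD$ is central, Schur's lemma forces every irreducible representation of $C^*(\bG_1)$ to admit a central character $\chi$, whence it factors through the fibre $C^*(\bG_1,\chi)$; that fibre being postliminal, the representation contains the compact operators $\cK$. Thus every irreducible representation of $C^*(\bG_1)$ contains $\cK$, i.e. $C^*(\bG_1)$ is GCR and $\bG_1$ is type-I.

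The main obstacle I anticipate is making the fibrewise tensor identification rigorous as an isomorphism of $C_0(\widehat{\bD})$-algebras in the topological (non-discrete) setting: tracking continuity of the field of twisted group algebras over $\widehat{\bD}$, and reconciling the maximal tensor product coming from the universal property with the minimal one used in the type-I tensor criterion (the two agree precisely because the total algebra, being postliminal, is nuclear). By contrast the reassembly step is comparatively soft, resting only on centrality of $\bD$—so that irreducibles factor through fibres—together with the standard facts that quotients and tensor factors of postliminal algebras are again postliminal.
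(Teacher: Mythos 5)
Your argument is correct, but it takes a genuinely different route from the paper's. The paper runs the Mackey normal-subgroup analysis of Auslander--Moore twice: once for $\bD\trianglelefteq\bG_1$, reducing type-I-ness of $\bG_1$ to type-I-ness of the projective $c_\chi$-representations of $\bH=\bG_1/\bD$ for all $\chi\in\widehat{\bD}$, and once for $\bG_2\trianglelefteq\bE$, checking that the Mackey obstruction attached to a factor representation of $\bG_2$ with central character $\chi$ is that same class $c_\chi\in H^2(\bH,\bS^1)$, so that type-I-ness of $\bE$ delivers exactly the needed statement about projective representations. You replace both invocations of the Mackey machine by the $C^*$-algebraic facts that (i) the central-character fibre $C^*(\bE,\chi)$ is a quotient of $C^*(\bE)$ and factors as $C^*(\bG_1,\chi)\otimes_{\max}C^*(\bG_2,\chi)$ because $\chi$-representations of the pushout are precisely commuting pairs of $\chi$-representations, and (ii) a tensor product of $C^*$-algebras is postliminal only if each nonzero tensorand is, plus Schur's lemma to reassemble over $\chi\in\widehat{\bD}$. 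The two proofs are of course parallel --- $C^*(\bG_1,\chi)$ is exactly the algebra governing the projective $c_\chi$-representations of $\bH$ --- but yours buys two things: it sidesteps the regularity hypotheses of the Mackey machine (trivially satisfied here since the actions on the relevant duals are trivial, but still requiring mention), and it reveals that the hypothesis that $\bG_2$ be type I is not really needed, since all your tensor criterion uses is $C^*(\bG_2,\chi)\ne 0$, which follows by inducing $\chi$ up to $\bG_2$ (centrality of $\bD$ guaranteeing that the induced representation still has central character $\chi$). Note also that the obstacle you anticipate --- continuity of the field of twisted group algebras over $\widehat{\bD}$ --- never actually arises: your argument only uses that each fibre is a quotient together with the single-fibre tensor factorization, so the global identification $C^*(\bE)\cong C^*(\bG_1)\otimes_{C_0(\widehat{\bD})}C^*(\bG_2)$ can be dispensed with entirely.
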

\begin{proof}
  We are assuming $\bG_2$ is of type I. It is also clearly normal in $\bE$, so we can apply the usual Mackey machinery as outlined in \cite[Chapter I, Proposition 10.4]{am}. Write
  \begin{equation*}
    \bH:=\bG_1/\bD\cong \bE/\bG_2. 
  \end{equation*}
  The extension
  \begin{equation*}
    \{1\}\to \bD\to \bG_1\to \bH\to \{1\}
  \end{equation*}
  corresponds to a cohomology class in $H^2(\bH,\bD)$, where the cohomology groups are those introduced on \cite[p.43]{moore-ext} (see also \cite[top of p.44, before \S 2]{moore-ext} for a sketch of the extension-cohomology correspondence and \cite[Theorem 7.8]{vrd} for a detailed treatment of {\it central} extensions).

  A character
  \begin{equation*}
    \widehat{\bD}\ni \chi:\bD\to \bS^1
  \end{equation*}
  pushes this class forward to
  \begin{equation*}
    c_{\chi}\in H^2(\bH,\bS^1),
  \end{equation*}
  so that we can now speak of {\it projective $\bH$-$c_{\chi}$-representations} (\cite[p.22]{am} or \cite[\S VI.2]{vrd}).

  The centrality assumption implies that the action of $\bH$ on the spectrum $\widehat{\bD}$ is trivial, and \cite[Chapter I, Proposition 10.4]{am} then equates the type-I property for $\bG_1$ with the requirement that all projective $\bH$-$c_{\chi}$-representations be type-I, for arbitrary $\chi\in \widehat{\bG}$.

  Next, observe, that every $\chi\in \widehat{\bG}$ (or rather a sum of copies thereof) arises as the restriction to $\bD$ of a factor $\bG_2$-representation $\rho$: consider the {\it induced representation} $\mathrm{Ind}_{\bD}^{\bG_2}\chi$ (\cite[Chapter I, Section 9]{am} or \cite[\S 2.3]{kt}), and then take for $\rho$ any of the constituents in a {\it direct-integral decomposition}
  \begin{equation*}
    \mathrm{Ind}_{\bD}^{\bG_2}\chi\cong \int_{\widehat{\bG_2}}\rho_x\ \mathrm{d}\mu(x);
  \end{equation*}
  see \cite[Theorem 8.4.2]{dixc} or \cite[Theorem 4.12.4]{ped-aut}. Fix $\chi\in \widehat{\bD}$ and $\rho\in \widehat{\bG_2}$ restricting to the former.

  Now apply the Mackey machine to the normal inclusion $\bG_2\trianglelefteq \bE$ instead; here too, $\bH\cong \bE/\bG_2$ acts trivially on $\widehat{\bG_2}$. By direct examination of its construction in \cite[Chapter I, Proposition 10.3]{am}, it is clear that the {\it Mackey obstruction cocycle} in
  \begin{equation*}
    H^2(\bE/\bG_2,\bS^1)\cong H^2(\bH,\bS^1)
  \end{equation*}
  attached to $\rho$ is nothing but $c_{\chi}$. The type-I assumption on $\bE$ together with \cite[Chapter I, Proposition 10.4]{am} then tell us that indeed projective $c_{\chi}$-representations of
  \begin{equation*}
    \bH\cong \bE/\bG_2\cong \bG_1/\bD
  \end{equation*}
  are type-I, so we are done. 
\end{proof}

As a consequence, we obtain the following partial converse to \Cref{cor:ext}. It also shows that \Cref{ex:latt} could not have gone through (for {\it Lie} groups; an important constraint) if $\bN$ were normal.

\begin{proposition}\label{pr:cpctliequot}
  If \Cref{eq:ext} is a type-I extension with $\bN$ discrete and $\bG$ compact Lie then $\bN$ too must be type-I.
\end{proposition}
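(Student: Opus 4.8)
The plan is to realize $\bE$ as a central pushout of $\bN$ with the identity component $\bE^0$, and then to invoke \Cref{pr:centemb}. First I would reduce to the case in which $\bG$ is \emph{connected}: since $\bG$ is compact Lie, its identity component $\bG^0$ is open of finite index, so its preimage $\bE_1\le\bE$ is open (hence type-I by \Cref{re:cofin}), contains $\bN$ as a closed normal subgroup, and satisfies $\bE_1/\bN\cong\bG^0$. As $\bN$ is literally unchanged by this replacement, it suffices to treat $\bE_1$; so assume from now on that $\bG$ is connected compact Lie.

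Next I would analyze the conjugation action. Because $\bN$ is discrete and normal, for each $n\in\bN$ the map $g\mapsto gng^{-1}$ is continuous with discrete image, hence locally constant; restricting it to the connected identity component $\bE^0$ shows that $\bE^0$ centralizes $\bN$. Writing $\bN_0:=\bN\cap\bE^0$, it follows that $\bN_0\subseteq Z(\bN)$ is abelian, and that $\bN_0$ is central in $\bE^0$ as well. Since $\bG$ is connected and $\bE\to\bG$ is open, $\bE^0$ surjects onto $\bG$, whence $\bE=\bN\bE^0$ with $\bN\cap\bE^0=\bN_0$ and with $\bN$ and $\bE^0$ commuting. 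The multiplication map then induces an isomorphism of topological groups $\bE\cong\bN\coprod_{\bN_0}\bE^0$, exhibiting $\bE$ as a central pushout in the sense of \Cref{def:centemb} with $\bG_1=\bN$, $\bG_2=\bE^0$ and $\bD=\bN_0$.

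It remains to feed this into \Cref{pr:centemb}, for which I must check that the factor $\bE^0$ is type-I. But $\bE^0$ sits in an extension $\{1\}\to\bN_0\to\bE^0\to\bG\to\{1\}$ with $\bN_0$ abelian (hence type-I) and $\bG$ compact, so \Cref{cor:ext} applies and $\bE^0$ is type-I. With $\bE$ type-I by hypothesis and $\bE^0$ type-I, \Cref{pr:centemb} yields the type-I property for $\bG_1=\bN$, as desired. The step demanding the most care is the second one: verifying that $\bE^0$ genuinely centralizes $\bN$ and that $\bE=\bN\bE^0$ assembles into a central pushout in the precise topological sense — in particular that the algebraic isomorphism $(\bN\times\bE^0)/\bN_0\to\bE$ is a homeomorphism (an open-mapping argument) and that $\bN_0$ is abelian. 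This is also exactly where the hypothesis that $\bE$ be type-I becomes indispensable, entering only through \Cref{pr:centemb}: absent it, the index $[\bN:\bN_0]$ may be infinite and $\bN$ need not be type-I.
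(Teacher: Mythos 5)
Your strategy is the paper's own: reduce to connected $\bG$ by passing to a cofinite open subgroup, show $\bE=\bN\bE^0$ with $\bE^0$ centralizing $\bN$, realize $\bE$ as the central pushout $\bN\coprod_{\bN_0}\bE^0$, get the type-I property for $\bE^0$ from \Cref{cor:ext} (since $\bN_0$ is abelian, normal and cocompact there), and conclude with \Cref{pr:centemb}. All of that matches the paper step for step.

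There is, however, one genuine gap, and it sits exactly where the Lie hypothesis has to do its real work. You assert that ``since $\bG$ is connected and $\bE\to\bG$ is open, $\bE^0$ surjects onto $\bG$.'' Openness of the quotient map does not by itself make the image of $\bE^0$ open: you need $\bE^0$ to be open \emph{in $\bE$}, and for a general locally compact group the identity component need not be open. The paper supplies this by observing that $\bE$, being an extension of Lie groups (discrete $\bN$ by compact Lie $\bG$), is itself Lie \cite[Theorem 3.1]{gls}, so $\bE_0$ is open, its image under the open quotient map is an open subgroup of the connected group $\bG$, and hence is all of $\bG$. This is not a pedantic quibble: \Cref{ex:qhat} exhibits $\bE=\bR\times\overline{\bZ}$ with a discrete normal copy of $\bZ$ and compact \emph{connected} (non-Lie) quotient $\widehat{\bQ}$ in which $\bN\bE_0$ is a proper dense subgroup, so your surjectivity claim genuinely fails without the Lie assumption --- and \Cref{ex:discbyconn} shows the proposition itself fails there. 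Everything else in your write-up (the locally-constant conjugation argument, the open-mapping identification of the pushout, and the applications of \Cref{cor:ext} and \Cref{pr:centemb}) is sound once openness of $\bE^0$ is in hand.
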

\begin{proof}
  First, because $\bG$ is compact Lie, it has finitely many connected components. By \Cref{re:cofin}, there is no harm in assuming it connected upon passing to a finite-index subgroup.
  
  Being an extension of Lie groups, $\bE$ itself is Lie \cite[Theorem 3.1]{gls} and hence its connected component $\bE_0$ is open. Since the map $\bE\to \bE/\bN\cong \bG$ is open, it maps $\bE_0$ onto an open subgroup. Because $\bG$ is connected, $\bE_0$ {\it surjects} onto $\bG$.

  This means that $\bE$ equals $\bN \bE_0$. The intersection $\bD:=\bN\cap \bE_0$ is discrete and normal in the connected group $\bE_0$, so it must be central there. It is also central in $\bN$ (and hence globally, in $\bE$) because the connected group $\bE_0$ normalizes and hence centralizes the discrete group $\bN$.

  We can now recover $\bE$ as a central pushout
  \begin{equation*}
    \bE\cong \bN\coprod_{\bD}\bE_0.
  \end{equation*}
  It is of type I by assumption, and $\bE_0$ is of type I by \Cref{cor:ext} because it contains an abelian, normal, cocompact subgroup $\bD$. The conclusion follows from \Cref{pr:centemb}.
\end{proof}

Given an extension \Cref{eq:ext} with $\bN$ discrete and $\bG$ compact connected but {\it not} Lie, the neat decomposition phenomena leveraged in the proof of \Cref{pr:cpctliequot} might not obtain. Specifically, the discrete normal subgroup $\bN\trianglelefteq \bE$ and the connected component $\bE_0$ might not generate a closed subgroup of $\bE$.

\begin{example}\label{ex:qhat}
  Take the group $\bE$ in the extension \Cref{eq:ext} to be the product $\bR\times \overline{\bZ}$, where the second factor is the {\it profinite completion}
  \begin{equation*}
    \overline{\bZ}\cong \widehat{\bQ/\bZ}
  \end{equation*}
  of $\bZ$ (for the latter notion see \cite[Example 2.1.6 (2)]{rz}, with the caveat that that reference uses a hat for the bar; this would conflict with the present paper's use of hats to denote {\it duals}). 

  The normal subgroup $\bN\le \bE$ of \Cref{eq:ext} will be the ``diagonal'' copy of $\bZ$:
  \begin{equation*}
    \bN:=\{(n,n)\in \bR\times \overline{\bZ}\ |\ n\in \bZ\}. 
  \end{equation*}
  The quotient $\bG=\bE/\bN$ of \Cref{eq:ext} is a central pushout $\bR\coprod_{\bZ}\overline{\bZ}$ in the sense of \Cref{def:centemb}, and thus dual to the fibered product $\bR\times_{\bS_1}(\bQ/\bZ)$ of the two familiar maps
  \begin{equation*}
    \bR\to \bS^1\quad\text{and}\quad \bQ/\bZ\to \bS^1
  \end{equation*}
  (the latter identifying $\bQ/\bZ$ with the torsion subgroup of the circle, i.e. the group of roots of unity). This is easily seen to be precisely $\bQ$, and hence
  \begin{equation*}
    \bG=\bE/\bZ\cong \widehat{\bQ}.
  \end{equation*}
  We thus have an (abelian) extension
  \begin{equation*}
    \{1\}\to \bZ\to \bE\to \widehat{\bQ}\to \{1\}
  \end{equation*}
  with $\bZ$ discrete and $\widehat{\bQ}$ compact and connected (being dual to a torsion-free discrete group \cite[Corollary 8.5]{hm3}). Note, though, that the specific copy of $\bZ\subset \bE$ we chose maps densely into
  \begin{equation*}
    \bE/\bE_0 = \bE/\bR\cong \overline{\bZ},
  \end{equation*}
  so that the subgroup $\bN\bE_0\le \bE$ featuring in the proof of \Cref{pr:cpctliequot}, this time around, is proper and dense.
\end{example}

\subsection{Cocompact counterexamples}\label{subse:coco-counter}

We gather a number of examples of the type-I property failing to descend along cocompact normal embeddings. The first batch, based on \Cref{con:cocobad}, will produce extensions with {\it connected} compact quotient.

\begin{construction}\label{con:cocobad}
  Let $\bA$ be a torsion-free discrete abelian group fitting into a non-type-I central extension
  \begin{equation}\label{eq:s1ea}
    \{1\}\to \bS^1\xrightarrow[]{\quad\phantom{\pi}\quad} \bE\xrightarrow[]{\quad\pi\quad} \bA\to \{1\}. 
  \end{equation}
  Denote by $\bK:=\widehat{\bA}$ its compact Pontryagin dual, and form the semidirect product $\bE\rtimes \bK$ with respect to the left action
  \begin{equation*}
    \bK\times \bE\xrightarrow[]{\quad \triangleright\quad} \bE
  \end{equation*}
  defined by  
  \begin{equation*}
    \chi\triangleright x:=\chi(\pi(x))x,\ \forall x\in \bE,\ \chi\in \bK,
  \end{equation*}
  where $\chi(\pi(x))\in \bS^1$ is regarded as an element of the central circle in \Cref{eq:s1ea}. 

  On the one hand, we are assuming that $\bE$ itself is not type-1. On the other, $\bE\rtimes \bK$ will be. To see this, note that it can also be recovered as an extension
  \begin{equation}\label{eq:s1kea}
    \{1\}\to \bS^1\times \bK\to  \bE\rtimes \bK\to \bA\to \{1\},
  \end{equation}
  to which we can apply the usual Mackey analysis (as recalled, say, in \cite[\S 4.3]{kt} or \cite[Chapter I]{am}). \Cref{eq:s1kea} induces an action of $\bA$ on the discrete character space
  \begin{equation*}
    \widehat{\bS^1\times \bK}\cong \bZ\times \bA
  \end{equation*}
  that is easily seen to be
  \begin{itemize}
  \item trivial on
    \begin{equation*}
      \bA\cong \{0\}\times \bA\subset \bZ\times \bA,
    \end{equation*}
    i.e. on those characters that vanish on $\bS^1$, and hence lift to characters of the abelian quotient $\bA\times \bK$ of $\bE\rtimes \bK$;
  \item and free elsewhere: on the component
    \begin{equation*}
      \bA\cong \{n\}\times \bA\subset \bZ\times \bA\cong \widehat{\bS^1\times \bK},\ n\ne 0
    \end{equation*}
    the action of $a\in \bA$ is translation by $na\in \bA$. Since we are assuming torsion-freeness, the action is indeed free.
  \end{itemize}
  All spaces in sight (the character space $\bZ\times \bA$ being acted upon and the acting group $\bA$) are discrete, so the regular embedding condition of \cite[p.186, Definition]{mack-unit} holds. One can then apply \cite[Chapter I, Proposition 10.4]{am} to conclude that $\bE\rtimes \bK$ is indeed of type I.
\end{construction}

As for producing central extensions \Cref{eq:s1ea} needed in \Cref{con:cocobad}, we first recall the following familiar construction (see e.g. the {\it groups of Heisenberg type} of \cite[discussion preceding Remarks 4]{milnes-heis}).

\begin{definition}\label{def:gendbl}
  Let
  \begin{equation*}
    p:\bA_l\times \bA_r\to \bB
  \end{equation*}
  be a continuous bilinear map of locally compact abelian groups. The {\it Heisenberg double} $D(p)$ associated to it is the semidirect product
  \begin{equation*}
    (\bB\times \bA_l)\rtimes \bA_r
  \end{equation*}
  induced by the action
  \begin{equation*}
    \bA_r\times (\bB\times \bA_l)\ni (a_r,b,a_l)\mapsto (p(a_l,a_r)b, a_l)\in \bB\times \bA_l.
  \end{equation*}
  It can alternatively be described as a semidirect product $(B\times \bA_r)\rtimes \bA_l$ (by interchanging the roles of the two $\bA_{\bullet}$s) or as a central extension
  \begin{equation*}
    \{1\}\to \bB\to D(p)\to \bA_l\times \bA_r\to \{1\}.
  \end{equation*}  
\end{definition}

We can now elaborate on the extensions \Cref{eq:s1ea}. 

\begin{construction}\label{con:nt1}
  The discrete abelian group $\bA$ of \Cref{con:cocobad} will be of the form $\bA_l\times \bA_r$ and $\bE$ will be a Heisenberg double $D(p)$ (per \Cref{def:gendbl}) for a pairing
  \begin{equation}\label{eq:pairalr}
    p:\bA_l\times \bA_r\to \bS^1
  \end{equation}
  whose associated morphism
  \begin{equation}\label{eq:phia1a2}
    \varphi:\bA_r\to \widehat{\bA_l} = \mathrm{Hom}\left(\bA_l,\bS^1\right)
  \end{equation}
  has infinite (or equivalently, non-discrete) image.

  The action of $\bA_r$ on the dual
  \begin{equation*}
    \widehat{\bS^1\times \bA_l}\cong \bZ\times \widehat{\bA_l}
  \end{equation*}
  is such that $a_r\in \bA_r$ operates on the copy of $\widehat{\bA_l}$ indexed by $n\in \bZ$ by translation with $\varphi(na_r)\in \widehat{\bA_l}$.

  The hypotheses of \cite[Theorem 1]{glm} are met and condition (6) there is clearly violated: being infinite, at least some of the orbits of this action are not homeomorphic to (discrete) quotients of $\bA_2$. But then we can conclude that $D(p)$ is not of type I by \cite[Chapter II, Corollary to Proposition 9]{am}: the discreteness condition therein holds, hence the conclusion.
\end{construction}

There is no shortage of {\it concrete} incarnations of \Cref{con:cocobad} (jointly with \Cref{con:nt1}):

\begin{examples}\label{ex:cocobad-concrete}
  One can take the morphism \Cref{eq:phia1a2} to be 
  \begin{itemize}
  \item the embedding
    \begin{equation*}
      \bZ\to \bS^1\cong \widehat{\bZ}
    \end{equation*}
    sending the generator to a non-root of unity; this was (essentially) the example produced by an anonymous referee in discussing an earlier version of this work.
  \item or more generally, any morphism from $\bZ$ to the discrete dual of a torsion-free abelian group, sending a generator to a non-torsion element.

    {\it Any} (non-trivial) torsion-free abelian group will do, since the dual of such a group is compact connected \cite[Corollary 8.5]{hm3} and hence cannot be torsion (e.g. \cite[\S 5, Exercise 2]{mor-pont}).
  \item or any non-trivial morphism $\bQ\to \widehat{\bQ}$. 

    Such morphisms exist in abundance, since $\widehat{\bQ}$ is {\it divisible} and hence any morphism $\bZ\to \widehat{\bQ}$ extends (\cite[Corollary 8.5 and discussion preceding Lemma 1.21]{hm3}). And any such morphism will meet the requirements (i.e. have infinite image), since $\widehat{\bQ}$ is torsion-free \cite[Corollary 8.5]{hm3}.
  \end{itemize}
  In summary, all examples produced this way will give non-type-I $\bE$ with type-I semidirect products $\bE\rtimes \bK$ for compact connected $\bK$.
\end{examples}

A different class of examples will produce type-I semidirect products $\bD\rtimes \bK$ with
\begin{itemize}
\item $\bD$ discrete, nilpotent but {\it not} of type I;
\item and $\bK$ profinite. 
\end{itemize}

First, a follow-up on \Cref{def:gendbl}:

\begin{definition}\label{def:dbla}
  Let $\bA$ be a locally compact abelian group.
  \begin{itemize}
  \item Its {\it Heisenberg double} $D(\bA)$ is the $D(p)$ of \Cref{def:gendbl} for the canonical pairing
    \begin{equation}\label{eq:canps1}
      p:\bA\times \widehat{\bA}\to \bS^1.
    \end{equation}
  \item The {\it restricted (Heisenberg) double} $D_r(\bA)$ is $D(p_r)$, where
    \begin{equation*}
      p_r:\bA\times\widehat{\bA}\to \bP
    \end{equation*}
    is the corestriction of \Cref{eq:canps1} to the closed subgroup of $\bS^1$ its image generates.
  \end{itemize}
  The distinction between $D$ and $D_r$ of course only makes a difference for {\it finite} $\bA$: $D_r(\bA)$ is then finite, while $D(\bA)$ is not.  
\end{definition}

% % \begin{definition}\label{def:heisdbl}
% %   Let $\bA$ be a finite abelian group. Its {\it Heisenberg double} $D(\bA)$ is the central extension
% %   \begin{equation*}
% %     \{1\}\to \bP \to D(\bA)\to \bA\times \widehat{\bA}\to \{1\},
% %   \end{equation*}
% %   where
% %   \begin{itemize}
% %   \item $\widehat{\bA}$ is the Pontryagin dual, as always;    
% %   \item we denote by
% %     \begin{equation*}
% %       p:\widehat{\bA}\times \bA\to \bS^1
% %     \end{equation*}
% %     the canonical pairing;
% %   \item and
% %     \begin{equation*}
% %       \bP:= \langle \mathrm{im}(p)\rangle\subset \bS^1
% %     \end{equation*}
% %     is the (finite cyclic) subgroup of the circle generated by the image of that pairing.
% %   \end{itemize}
% %   We can also recover $\bD:=D(\bA)$ as the semidirect product $(\bP\times \bA)\rtimes\widehat{\bA}$, where the action of $\chi\in\widehat{\bA}$ on $(z,a)\in \bP\times \bA$ is
% %   \begin{equation*}
% %     \chi\triangleright (z,a) := (\chi(a)z,a). 
% %   \end{equation*}
% % \end{definition}
% % 

\begin{remark}\label{re:heisplit}
  For finite cyclic $\bA\cong \bZ/n$ (in which case $\widehat{\bA}\cong \bZ/n$ as well) the restricted double $D_r(\bA)$ is nothing but the finite Heisenberg group of order $n^3$ denoted by $\cH(\bZ_n)$ in \cite[\S 2]{schul-heis}.

  An arbitrary finite abelian $\bA$ decomposes as a direct sum of finite cyclic groups. Any decomposition $\bA\cong \bA_1\oplus \bA_2$ gives provides its dual decomposition for $\widehat{\bA}$ compatible with pairings. This then gives embeddings
  \begin{equation*}
    D_r(\bA_i)\subseteq D_r(\bA),\ i=1,2,
  \end{equation*}
  which induce a surjection
  \begin{equation*}
    D_r(\bA_1)\oplus D_r(\bA_2)\to D_r(\bA).
  \end{equation*}
  All in all, we have a surjection of the form
  \begin{equation*}
    \bigoplus_i D_r(\bZ/n_i)\to D_r(\bA)
  \end{equation*}
  realized by identifying the centers of the summands of the domain with cyclic subgroups of the same circle; in particular, that surjection restricts to an embedding for each individual $D_r(\bZ/n)$. This will all be implicit in much of the discussion below.
\end{remark}

\begin{construction}\label{con:heisdual}
  Let $(\bA_n)_{n\in \bZ_{>0}}$ be any infinite family of non-trivial finite abelian groups and set
  \begin{equation*}
    \bD_n:=D_r(\bA_n),\quad \bD:=\bigoplus_n \bD_n.
  \end{equation*}
  Each $\bA_n$ acts on its own $D_r(\bA_n)$ by conjugation, giving an action by the compact {\it product}
  \begin{equation*}
    \bK:=\prod_n \bA_n
  \end{equation*}
  on $\bD$.

  The group we are interested in will be the {\it family-wise equivariant restricted double}
  \begin{equation*}
    \bE_r\left((\bA_n)_n\right):=\bD\rtimes \bK = \left(\bigoplus_n \bD_n\right)\rtimes\left(\prod_n \bA_n\right). 
  \end{equation*}
  That $\bD$ is not virtually abelian and hence not of type I is easily seen, being an infinite sum of non-abelian groups. Alternatively, instead of appealing to Thoma's \cite[Theorem 1]{tt}, note that every $C^*(\bD_n)$ has some matrix-algebra quotient $M_{d_n}$ for $d_n\ge 1$ because $\bD_n=D_r(\bA_n)$ is not abelian, and hence we have a surjection
  \begin{equation*}
    C^*(\bD)\cong \bigotimes_n C^*(\bD_n)\twoheadrightarrow \bigotimes_n M_{k_n}
  \end{equation*}
  between the respective {\it infinite tensor products} \cite[Definition XIV.1.5]{tak3}. The right-hand side is one of the {\it Glimm algebras} \cite[\S 6.5]{ped-aut}, well known not to be of type I \cite[Theorem 6.5.15]{ped-aut}.

  It thus remains to argue that $\bE:=\bE_r\left((\bA_n)_n\right)$ is type-I. 
\end{construction}

\begin{proposition}\label{pr:kbydist1}
  For any family $(\bA_n)_n$ of non-trivial finite abelian groups the family-wise equivariant restricted double $\bE:=\bE_r\left((\bA_n)_n\right)$ of \Cref{con:heisdual} is of type I.
\end{proposition}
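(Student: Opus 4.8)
The plan is to exhibit $\bE$ as a two-step nilpotent (compact-by-discrete) group and then run the central-subgroup Mackey analysis already used in the proof of \Cref{pr:centemb}. First I would locate the center. Each $\bD_n=D_r(\bA_n)$ is a finite Heisenberg group whose center is the subgroup $\bP_n$ generated by the restricted pairing, with $\bD_n/\bP_n\cong \bA_n\times\widehat{\bA_n}$ abelian; moreover the conjugation action of $\bA_n\subseteq \bD_n$ is inner, so it fixes $\bP_n$ pointwise and acts trivially on $\bD_n/\bP_n$. Consequently $\bZ:=\bigoplus_n \bP_n$ is a discrete subgroup that is \emph{central} in $\bE=\bD\rtimes\bK$ (it is central in $\bD$, and $\bK$ fixes it pointwise), and it is closed in $\bE$ since it is a subgroup of the closed discrete normal subgroup $\bD$. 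The quotient $\bV:=\bE/\bZ\cong\big(\bigoplus_n(\bA_n\times\widehat{\bA_n})\big)\times\bK$ is abelian, because the conjugation action of $\bK$ becomes trivial on the abelian quotients $\bD_n/\bP_n$. Everything in sight is second countable.

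With $\bZ$ central, $\bV$ acts trivially on $\widehat{\bZ}$, so the regular-embedding hypothesis of \cite[Chapter I, Proposition 10.4]{am} holds automatically (the orbits in $\widehat{\bZ}$ are singletons), and exactly as in the proof of \Cref{pr:centemb} the group $\bE$ is type I precisely when, for every character $\chi\in\widehat{\bZ}=\prod_n\widehat{\bP_n}$, all projective $\bV$-$c_\chi$-representations are type I; equivalently, when each twisted group $C^*$-algebra $C^*(\bV,c_\chi)$ is type I. This is the reduction in which the contrast between the \emph{discrete} $\bD=\bigoplus_n\bD_n$ (not type I) and the \emph{compact} acting group $\bK=\prod_n\bA_n$ will do its work.

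The substantive step is the symplectic analysis of $C^*(\bV,c_\chi)$ for fixed $\chi$. I would compute the alternating bicharacter $B_\chi=\chi\circ\omega$, where $\omega:\bV\times\bV\to\bZ$ is the commutator pairing: blockwise $\omega$ restricts on $\bA_n\times\widehat{\bA_n}$ to the Heisenberg symplectic form valued in $\bP_n$, augmented by the pairing of the compact coordinate $k_n\in\bA_n\subseteq\bK$ against $\widehat{\bA_n}$. The crux is the radical $S_\chi$ of $B_\chi$: the vanishing conditions force the $\widehat{\bA_n}$-coordinates to be trivial and identify each summand coordinate $a_n\in\bigoplus_n\bA_n$ with the corresponding compact coordinate $k_n\in\bK$, so that $S_\chi$ is a discrete subgroup embedded diagonally across $\bW$ and $\bK$. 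The upshot is that passing to $\bV/S_\chi$ \emph{absorbs the troublesome discrete summand $\bigoplus_n\bA_n$ into the compact factor $\prod_n\bA_n$}, and the residual nondegenerate form on $\bV/S_\chi$ becomes the canonical pairing between a compact product $\prod_n(\bA_n/\bA_n^{(\chi)})$ and its Pontryagin-dual discrete sum $\bigoplus_n\widehat{(\cdots)}$. This is exactly a Stone--von Neumann situation, so the reduced twisted algebra is $\cK(L^2(M))$ for a suitable compact $M$; since $B_\chi$ vanishes on its radical, $C^*(\bV,c_\chi)\cong C_0(\widehat{S_\chi})\otimes\cK(L^2(M))$, which is visibly type I.

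Feeding this back into the criterion of the second step shows that all $c_\chi$-projective representations of $\bV$ are type I, whence $\bE$ is type I. I expect the main obstacle to be precisely the radical computation above, together with the uniform treatment of characters $\chi$ whose components $\chi_n$ fail to be faithful on $\bP_n$: one must verify that replacing the finite blocks $\bA_n$ by the appropriate quotients $\bA_n/\bA_n^{(\chi)}$ preserves the product--sum Pontryagin duality that delivers Stone--von Neumann. This is the exact mechanism by which the compact $\bK$ repairs the non-closed-range pathology of \Cref{con:nt1} that is responsible for $\bD$ itself not being type I.
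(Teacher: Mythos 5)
Your argument is correct in substance but follows a genuinely different route from the paper's. The paper works directly with $C^*(\bE)\cong C^*(\bD)\rtimes\bK$: it decomposes each finite block $C^*(\bD_n)$ into matrix algebras, notes that the minimal central projections act centrally in every representation of $\bE$, so that factor representations factor through an infinite tensor product of single matrix blocks $M_{k_n}\cong C^*(\bB_n)\rtimes\widehat{\bB_n}$ crossed by $\bK$, and concludes by Takai duality after twisting away the central kernel of $\prod_n\bA_n\to\prod_n\bB_n$. You instead peel off the group-theoretic center $\bZ=\bigoplus_n\bP_n$ first, reduce via the Mackey machine (exactly as in \Cref{pr:centemb}) to type-I-ness of the twisted algebras $C^*(\bV,c_\chi)$ of the abelian quotient $\bV$, and settle those by computing the radical of the antisymmetrized multiplier and invoking Stone--von Neumann. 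Both proofs turn on the same mechanism---the compact $\prod_n\bA_n$ ``completes'' the discrete $\bigoplus_n\bA_n$ so that the relevant dual pair becomes a compact product against a discrete sum, restoring self-duality/closed orbits---and both arrive at the same quotients $\bB_n$ of $\bA_n$. What your route buys is a cleaner conceptual reduction to a single abelian multiplier problem; what it costs is the need for a citable type-I criterion for multipliers on second-countable LCA groups (Baggett--Kleppner, or a hands-on Stone--von Neumann argument for $G\times\widehat{G}$ with $G$ compact), an input the paper's Takai-duality route does not require.

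One local inaccuracy, which you partly anticipate yourself: the description of $S_\chi$ as a discrete, diagonally embedded subgroup is only correct when each component $\chi_n$ is faithful on the cyclic group $\bP_n$. In general, writing $\chi_n$ as raising to the power $m_n$ and $[m]$ for $m$-torsion, the radical is
\begin{equation*}
  S_\chi=\Bigl(\bigoplus_n\widehat{\bA_n}[m_n]\Bigr)\oplus\bigl\{(a,k)\ :\ m_n(a_n+k_n)=0\ \forall n\bigr\},
\end{equation*}
which contains the compact group $\prod_n\bA_n[m_n]$ and is therefore not discrete, and the $\widehat{\bA_n}$-coordinates need not vanish. This does not derail the argument: the quotient $\bV/S_\chi$ is still topologically $\bigl(\prod_n\bB_n\bigr)\times\bigl(\bigoplus_n\widehat{\bB_n}\bigr)$ with $\bB_n=m_n\bA_n$, carrying the canonical pairing of a compact group against its discrete dual, so the Stone--von Neumann conclusion (and hence liminality of each $C^*(\bV,c_\chi)$) survives.
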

\begin{proof}
  Decompose the finite-dimensional group algebra of each individual $\bD_n=D_r(\bA_n)$ as a product of matrix algebras, as in \Cref{con:heisdual}. That decomposition is of course invariant under the inner action by $\bA_n$, so every minimal central projection in
  \begin{equation*}
    C^*(\bD_n)\subset C^*(\bD)
  \end{equation*}
  will act centrally in any representation of $\bE=\bD\rtimes \bK$. It follows that {\it factor} representations factor through surjections of the form
  \begin{align*}
    C^*(\bE) &= C^*(\bD\rtimes \bK)\\
             &\cong C^*(\bD)\rtimes \bK \\
             &\cong \left(\bigotimes_n C^*(\bD_n)\right)\rtimes \bK\numberthis \label{eq:bigtens} \\
             &\twoheadrightarrow \left(\bigotimes_n M_{k_n}\right)\rtimes \bK,
  \end{align*}
  so it is enough to focus on these quotients of $C^*(\bE)$.

  Consider an individual $\bA_n$. The quotient $C^*(\bD_n)\twoheadrightarrow M_{k_n}$ can be identified, for some quotient $\bA_n\twoheadrightarrow \bB_n$ (of order $k_n$)
  \begin{itemize}
  \item with the crossed product $C^*\left(\bB_n\right)\rtimes \widehat{\bB_n}$;
  \item so that the inner action by $\bA_n$ factors through the {\it dual action} \cite[Definition X.2.4]{tak2} by $\bB_n$ on that crossed product.
  \end{itemize}
  This is easy to see for cyclic $\bA_n$ from the explicit construction of its finite-dimensional representations \cite[\S 2]{gh-heis}, and follows in general from a decomposition of $\bA_n$ as a sum of cyclic summands, as in \Cref{re:heisplit}.

  Piecing this all together then, \Cref{eq:bigtens} is isomorphic to
  \begin{equation}\label{eq:doublecross}
    \left(\bigotimes_n C^*(\bB_n)\right)\rtimes \left(\bigoplus_n \widehat{\bB_n}\right)\rtimes \left(\prod_n \bA_n\right)
  \end{equation}
  with the rightmost action factoring through the dual action of the smaller quotient
  \begin{equation}\label{eq:atobquot}
    \bK=\prod_n \bA_n\to \prod_n \bB_n. 
  \end{equation}
  If we had {\it equality} $\bA_n=\bB_n$ (for all $n$) the resulting algebra, by {\it Takai duality} (\cite[Theorem 3.4]{takai-dual}, \cite[Theorem 7]{raeb-dual}, etc.), would be Morita equivalent to the leftmost (commutative) factor $C^*\left(\bigoplus \bB_n\right)$.
  
  In general, \Cref{eq:atobquot} has a kernel
  \begin{equation*}
    \bK_0:=\prod\bA_{n,0}
  \end{equation*}
  (say). That kernel will act centrally, and hence by a character in any factor representation. Every character $\chi_0\in\widehat{\bK_{0}}$ lifts to a character $\chi$ on $\bK$; the lift is not unique, but we choose one once and for all for each $\chi_0$. Then, in every factor representation with $\bK_{0}$ acting via $\chi_0$, we can twist the action of the larger group $\bK$ by $\chi^{-1}$ to obtain a (still factor) representation with $\bK$ acting trivially.

  All in all, we have decomposed the {\it factor spectrum} or {\it quasi-spectrum} (\cite[\S 7.2]{dixc} or \cite[\S 4.8.2]{ped-aut}) of \Cref{eq:doublecross} as a disjoint union of copies of 
  \begin{equation*}
    \left(\bigotimes_n C^*(\bB_n)\right)\rtimes \left(\bigoplus_n \widehat{\bB_n}\right)\rtimes \left(\prod_n \bB_n\right).
  \end{equation*}
  Once more, Takai duality delivers the conclusion that the $C^*$-algebra is of type I. 
\end{proof}

Per \Cref{con:heisdual} and \Cref{pr:kbydist1} we now have examples
\begin{equation*}
  \bE = \bE_r\left((\bA_n)_n\right) =
  \left(\bigoplus_n \bD_r(\bA_n)\right)
  \rtimes
  \left(\prod_n \bA_n\right)
\end{equation*}
of type-I semidirect products of non-type-I discrete groups by arbitrary products of finite abelian groups.

We can now combine this discussion with \Cref{ex:qhat} to produce counterexamples to (the conclusion of) \Cref{pr:cpctliequot} when the compact quotient $\bG$ is connected but not Lie.

\begin{example}\label{ex:discbyconn}
  Consider the group $\bE=\bR\times \overline{\bZ}$ of \Cref{ex:qhat}. Its $\overline{\bZ}$ factor decomposes the product
  \begin{equation*}
    \overline{\bZ}\cong \prod_{\text{primes } p}\bZ_p
  \end{equation*}
  of the groups
  \begin{equation*}
    \bZ_p:=\varprojlim_n \bZ/p^n
  \end{equation*}
  of {\it $p$-adic integers} \cite[Examples 2.1.6 (2) and 2.3.11]{rz}. Its quotient
  \begin{equation}\label{eq:ozonzps}
    \overline{\bZ}\cong \prod_p\bZ_p\twoheadrightarrow \prod_p \bZ/p
  \end{equation}
  acts, as in the preceding discussion, on some discrete non-type-I group $\bD_1$ so that $\bD_1\rtimes\left(\prod_p \bZ/p\right)$ is type-I. If now that action is extended (i.e. pulled back) to all of $\overline{\bZ}$ along \Cref{eq:ozonzps} and then also to $\bE$ along the surjection $\bE\to \overline{\bZ}$, we have an $\bE$-action on $\bD_1$. As in the proof of \Cref{pr:kbydist1}, the semidirect product
  \begin{equation*}
    \bD_1\rtimes \bE\cong \bR\times (\bD_1\rtimes\overline{\bZ})
  \end{equation*}
  is still of type I. 

  Now recall the diagonally-embedded $\bZ\subset \bE$ in \Cref{ex:qhat}. {\it Its} action on $\bD_1$ will provide us with a normal subgroup
  \begin{equation*}
    \bD:=\bD_1\rtimes \bZ \trianglelefteq \bD_1\rtimes \bE,
  \end{equation*}
  with quotient
  \begin{equation*}
    (\bD_1\rtimes \bE)/(\bD_1\rtimes \bZ)\cong \bE/\bZ\cong \widehat{\bQ}. 
  \end{equation*}
  This is compact and connected, so \Cref{pr:cpctliequot} fails even for {\it connected} compact quotients $\bG$ if they are not Lie.
\end{example}

For completeness, we end this section with a variant of \Cref{pr:cpctliequot} that leaves the compact quotient unrestricted but instead places the requirements on the discrete normal subgroup $\bN\trianglelefteq \bE$.

\begin{proposition}\label{pr:fgok}
  If \Cref{eq:ext} is a type-I extension with $\bN$ discrete finitely-generated and $\bG$ compact then $\bN$ too must be type-I.
\end{proposition}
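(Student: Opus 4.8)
The plan is to reduce, via Thoma's theorem (\cite[Satz 6]{thoma}), to the assertion that $\bN$ is virtually abelian, and then to produce a central-pushout decomposition of a cofinite subgroup of $\bE$ in the spirit of \Cref{pr:cpctliequot}---but with the \emph{centralizer} of $\bN$ playing the role that the connected component $\bE_0$ played there (indeed $\bE_0\subseteq C_{\bE}(\bN)$ always, since a connected group centralizes a discrete normal subgroup). The single observation driving everything is that, because $\bN$ is \emph{finitely generated}, its centralizer $C:=C_{\bE}(\bN)$ is \emph{open} in $\bE$. To see this, fix generators $n_1,\dots,n_k$ of $\bN$: each conjugation map $e\mapsto e n_i e^{-1}$ is continuous $\bE\to \bN$ into the discrete group $\bN$, so every stabilizer $\mathrm{Stab}_{\bE}(n_i)$ is open, and $C=\bigcap_{i=1}^{k}\mathrm{Stab}_{\bE}(n_i)$ is a finite intersection of open subgroups.

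Next I would pass to $\bE':=\bN C$. As $C$ is open, so is $\bE'$, and its image in the \emph{compact} quotient $\bG=\bE/\bN$ is an open, hence finite-index, subgroup; thus $\bE'$ is clopen of finite index---i.e.\ cofinite---in $\bE$. By \Cref{re:cofin} the type-I property is inherited by open and cofinite subgroups, so both $\bE'$ and $C$ are type-I. Replacing $\bE$ by $\bE'$ changes nothing in the hypotheses ($\bN$ remains discrete, finitely generated, normal and cocompact), so I may assume outright that $\bE=\bN C$ with $C=C_{\bE}(\bN)$ centralizing $\bN$.

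Now set $\bD:=\bN\cap C$. Since $\bN$ is closed and $C$ clopen, $\bD$ is a closed discrete subgroup, consisting of exactly the elements of $\bN$ that centralize $\bN$; that is, $\bD=Z(\bN)$, and in particular $\bD$ is abelian. Because $\bN$ and $C$ commute elementwise and $\bD$ lies in both, $\bD$ is central in $\bE=\bN C$ and embeds closedly and centrally into each of $\bN$ and $C$. The multiplication map $\bN\times C\to\bE$ is then a continuous homomorphism with kernel the antidiagonal copy of $\bD$, exhibiting $\bE$ as the central pushout
\begin{equation*}
  \bE\cong \bN\coprod_{\bD} C
\end{equation*}
of \Cref{def:centemb}. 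With $\bE$ and $C$ both type-I, \Cref{pr:centemb} yields the type-I property for $\bN$, as required.

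The crux of the argument---and the only place where finite generation is genuinely used---is the openness of $C_{\bE}(\bN)$; once that is in hand, everything reduces to the central-pushout machinery already assembled for \Cref{pr:cpctliequot}. The sole remaining technical nuisance, which I would treat briefly, is verifying that the abstract isomorphism $\bE\cong \bN\coprod_{\bD}C$ is a homeomorphism: this holds because $(n,c)\mapsto nc$ restricts to a homeomorphism on each clopen slice $\{n\}\times C$, hence is a continuous open surjection descending to a continuous open bijection on the quotient.
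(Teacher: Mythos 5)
Your proof is correct, and it takes a genuinely different route from the paper's. The paper reaches the same central-pushout endgame (\Cref{pr:centemb}) but gets there through structure theory: it passes to an open almost-connected subgroup $\bE_1\le\bE$ via Montgomery--Zippin, invokes the pro-Lie decomposition $\bE_1=\bE_0\bK$ with $\bK$ profinite, and uses the finiteness of orbits of the compact group $\bK$ on the discrete group $\bN$ to arrange that a cofinite subgroup centralizes $\bN$; a separate lemma (\Cref{le:almostco}) is then used to verify that $\bN\cap\bE_1$ is abelian so that \Cref{cor:ext} applies. Your single observation---that $C_{\bE}(\bN)$ is \emph{open} because each point-centralizer $C_{\bE}(n_i)$ is the preimage of the open singleton $\{n_i\}$ under the continuous map $e\mapsto en_ie^{-1}$ into the discrete group $\bN$, and there are only finitely many generators---short-circuits all of this: $\bN C_{\bE}(\bN)$ is automatically cofinite, $C_{\bE}(\bN)$ is type-I simply by openness (\Cref{re:cofin}), and the pushout $\bN\coprod_{Z(\bN)}C_{\bE}(\bN)$ is immediate. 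In effect you have noticed that the continuity-into-a-discrete-group argument, which the paper applies only to the profinite factor $\bK$ (via finite orbits), works for all of $\bE$ at once, making the pro-Lie detour unnecessary. Your argument is shorter, more elementary, and isolates precisely where finite generation enters; what the paper's longer route buys is the auxiliary \Cref{le:almostco} (centrality of discrete normal subgroups of almost connected groups up to finite index), which has some independent interest but is not needed for this proposition. The topological housekeeping at the end (openness of $(n,c)\mapsto nc$ on slices $\{n\}\times C$, hence the quotient identification is a homeomorphism) is exactly the right point to check and is handled correctly.
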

\begin{proof}
  Recall (e.g. \cite[discussion preceding Theorem 10.89]{hm3}) that a locally compact group is {\it almost connected} if its quotient by its identity connected component is compact. 
  
  Being locally compact, $\bE$ has an open almost-connected subgroup $\bE_1$ \cite[Lemma 2.3.1]{mz}. That group will then map onto an open (hence cofinite) subgroup of the compact group $\bG=\bE/\bN$, so by passing to a cofinite subgroup (harmlessly, by \Cref{re:cofin}) we can assume that
  \begin{equation*}
    \bE_1\text{ surjects onto }\bE/
    \bN\Rightarrow
    \bE = \bN\bE_1. 
  \end{equation*}
  Being almost-connected, $\bE_1$ is {\it approximable by Lie groups} (or {\it pro-Lie} \cite[Chapter 3]{hm-pl-bk}): every neighborhood of the identity contains a closed (hence compact) normal subgroup so that the resulting quotient is Lie \cite[\S 4.6, Theorem]{mz}. It follows from \cite[Corollary 8.3]{hm-struct-plie} that $\bE_1$ is of the form
  \begin{equation}\label{eq:e1decomp}
    \bE_1 = \bE_0 \bK,\quad \bE_0\text{ the connected component of }\bE,\ \bK\text{ profinite}.
  \end{equation}
  The inner action of the connected group $\bE_0$ on the discrete group $\bN$ is of course trivial, while that of the {\it compact} group $\bK$ has finite orbits. In particular, the centralizer in $\bK$ of the finitely many generators of $\bN$ has finite index, so upon passing to a cofinite subgroup again we can assume that all of \Cref{eq:e1decomp} centralizes $\bN$.

  Note then that $\bE$ is a type-I central pushout
  \begin{equation*}
    \bE = \bN\coprod_{\bN\cap \bE_1}\bE_1,
  \end{equation*}
  and \Cref{pr:centemb} will allow us to conclude as soon as we prove $\bE_1$ is of type I. It contains $\bN\cap \bE_1$ as a cocompact discrete normal subgroup, so by \Cref{cor:ext} it is enough to prove that $\bN\cap \bE_1$ is of type I, i.e. virtually abelian. That, in turn, follows from \Cref{le:almostco}.
\end{proof}

\begin{lemma}\label{le:almostco}
  Let $\bN\trianglelefteq \bE$ be a closed, normal, discrete subgroup of an almost connected locally compact group. There is a cofinite subgroup $\bE_1\le \bE$ such that
  \begin{equation*}
    \bN\cap \bE_1\le \bE_1
  \end{equation*}
  is central.
\end{lemma}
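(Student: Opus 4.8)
The plan is to reduce the statement to the case of a Lie group, where it becomes almost immediate. The point is that for a Lie group $\bL$ with finitely many connected components and a discrete normal subgroup, the identity component $\bL_0$ is already a cofinite (open) subgroup, and a connected group centralizes any discrete normal subgroup; so $\bL_0$ itself will do the job. To descend to this situation I would quotient $\bE$ by a small compact normal subgroup, exactly as in the proof of \Cref{pr:fgok}.

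Concretely, I would first invoke the pro-Lie structure of almost connected groups (\cite[\S 4.6, Theorem]{mz}) to produce a compact normal subgroup $\bC\trianglelefteq\bE$ with $\bL:=\bE/\bC$ a Lie group, choosing the defining neighborhood of the identity small enough that $\bC\cap\bN=\{1\}$ (possible since $\bN$ is discrete). The quotient $\bL$ is again almost connected, being a continuous image of $\bE$; as it is moreover Lie, its identity component $\bL_0$ is open, so $\bL/\bL_0$ is at once discrete and compact, hence finite. Thus $\bL_0\le\bL$ is open and cofinite. Writing $\pi:\bE\to\bL$ for the quotient map, I would then set
\begin{equation*}
  \bE_1:=\pi^{-1}(\bL_0),
\end{equation*}
an open, cofinite subgroup of $\bE$ with $[\bE:\bE_1]=[\bL:\bL_0]<\infty$. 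The key technical input is that $\pi(\bN)$ is again \emph{discrete} in $\bL$; granting this, $\pi(\bN)\cap\bL_0$ is a discrete normal subgroup of the connected group $\bL_0$, hence central in $\bL_0$. To verify centrality of $\bN\cap\bE_1$ in $\bE_1$, take $n\in\bN\cap\bE_1$ and $x\in\bE_1$: then $\pi(n),\pi(x)\in\bL_0$ with $\pi(n)$ central, so $\pi(xnx^{-1}n^{-1})=1$, i.e. $xnx^{-1}n^{-1}\in\bC$; but this commutator also lies in the normal subgroup $\bN$, so it lies in $\bN\cap\bC=\{1\}$ and $x$ centralizes $n$. This is exactly what is required.

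I expect the main obstacle to be the discreteness of $\pi(\bN)$ flagged above: images of discrete normal subgroups need not stay discrete — the diagonal $\bZ\subset\bR\times\overline{\bZ}$ of \Cref{ex:qhat} becomes dense modulo the \emph{noncompact} factor $\bR$ — so it is precisely here that compactness of $\bC$ and the triviality of $\bC\cap\bN$ must be used. I would establish it by a separation argument: $\bN\setminus\{1\}$ is closed in $\bE$ (as $\bN$ is closed and discrete) and disjoint from the compact set $\bC$, so by local compactness there is an open, $\bC$-saturated neighborhood $V\supseteq\bC$ with $V\cap(\bN\setminus\{1\})=\emptyset$, and its image is then a neighborhood of the identity in $\bL$ meeting $\pi(\bN)$ only trivially. (One can streamline this using that $\bN$ and $\bC$ commute elementwise, since $[\bN,\bC]\subseteq\bN\cap\bC=\{1\}$ for normal subgroups.) Everything else — the existence of $\bC$, the finiteness of $\bL/\bL_0$, and the commutator bookkeeping — is routine once discreteness is in place.
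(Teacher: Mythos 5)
Your argument is correct and follows essentially the same route as the paper's: quotient by a small compact normal subgroup $\bC$ with $\bC\cap\bN=\{1\}$ to reach a Lie group, pass to the preimage of its identity component, use that a discrete normal subgroup of a connected group is central, and finish by noting the relevant commutators lie in $\bN\cap\bC=\{1\}$. The only difference is that you spell out the separation argument for why $\pi(\bN)$ remains discrete, a point the paper's proof asserts directly from compactness of the kernel; your justification of it is sound.
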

\begin{proof}
  Using once more the fact that $\bE$ is pro-Lie\cite[\S 4.6, Theorem]{mz}, we can select a compact normal subgroup $\bK\trianglelefteq \bE$, sufficiently small that it intersects $\bN$ trivially, with Lie quotient $\bE/\bK$.

  That quotient has finitely many connected components by the almost-connectedness of $\bE$; there is thus a group
  \begin{equation*}
    \bK\le \bE_1\le \bE,
  \end{equation*}
  cofinite in $\bE$, such that $\bE_1/\bK$ is Lie and connected. Because $\bK$ is compact the image $\overline{\bN_1}$ of
  \begin{equation*}
    \bN_1:=\bN\cap \bE_1
  \end{equation*}
  in $\bE_1/\bK$ is still closed, discrete, and normal. But because $\bE_1/\bK$ is connected, $\overline{\bN_1}$ must be central therein. This means that $\bN_1$ is central in $\bE_1$ modulo $\bK$, hence central period by the assumption that $\bN_1$ and $\bK$ intersect trivially. 
\end{proof}

%%%%%%%%%%%%%%%%%%%%%%%%%%%%%%%%%%%%%%%%%%%%%%%%%%%%%%%%%%%%%%%%%%%%%%%%%%%%%
%%%%%%%%%%%%%%%%%%%%%%%%%%%%%%%%%%%%%%%%%%%%%%%%%%%%%%%%%%%%%%%%%%%%%%%%%%%%%
\section{Characterizing groups via type-I permanence}\label{se:converse}

We focus here on results converse to those of \Cref{se:cls}: the goal is to deduce various group-theoretic or topological properties (e.g. compactness) {\it assuming} extensions by the group in question preserve the type-I property. It will be convenient to have a short phrase indicating this quality.

\begin{definition}\label{def:t1pres}
  A second-countable locally compact group $\bG$ is {\it type-I-preserving} if for any action of $\bG$ on a type-I second-countable locally compact group $\bN$ the semidirect product $\bN\rtimes \bG$ is again type-I.
\end{definition}

Since one salient procedure that will produce a semidirect product as in \Cref{def:t1pres} is to take for $\bN$ the underlying space of a finite-dimensional $\bG$-representation, we single out a more restrictive notion. 

\begin{definition}\label{def:t1linpres}
  A second-countable locally compact group $\bG$ is {\it linearly type-I-preserving} if for any finite-dimensional linear representation of $\bG$ on $\bR^n$ the semidirect product $\bR^n\rtimes \bG$ is type-I.
\end{definition}

\begin{remark}\label{re:ist1}
  Note that (linearly) type-I-preserving groups are always type-I, since we can apply the defining property to the trivial action on the trivial group.
\end{remark}

\begin{lemma}\label{le:quot}
  If a second-countable locally compact group $\bG$ is (linearly) type-I-preserving, so, respectively, is any quotient $\bG/\bH$ by a closed normal subgroup $\bH\trianglelefteq \bG$.
\end{lemma}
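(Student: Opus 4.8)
The plan is to push every test action down through the quotient map $q:\bG\to\bG/\bH$ and then recognize the relevant semidirect product for $\bG/\bH$ as a quotient \emph{group} of the corresponding one for $\bG$, on which the hypothesis applies directly. I would treat both cases (type-I-preserving and linearly type-I-preserving) in parallel, the only difference being the class of $\bN$ allowed.

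First I would set up the pullback. Suppose $\bG$ is type-I-preserving and $\bG/\bH$ acts continuously on a type-I second-countable locally compact group $\bN$. Precomposing with $q$ yields an action of $\bG$ on the \emph{same} group $\bN$, in which the normal subgroup $\bH\trianglelefteq\bG$ acts trivially by construction. Since $\bN$ is unchanged it is still type-I, so the defining property of $\bG$ makes $\bN\rtimes\bG$ type-I; note $\bG/\bH$, and hence all groups in sight, remain second-countable. The linear case is identical: a finite-dimensional representation $\bG/\bH\to GL(\bR^n)$ pulls back to one of $\bG$, and linear type-I-preservation of $\bG$ renders $\bR^n\rtimes\bG$ type-I.

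Next I would identify $\bN\rtimes(\bG/\bH)$ as a quotient group. Because $\bH$ acts trivially on $\bN$ and is normal in $\bG$, a one-line conjugation computation shows $\{1\}\times\bH$ is a closed normal subgroup of $\bN\rtimes\bG$, and $(n,g)\mapsto(n,q(g))$ descends to an isomorphism
\begin{equation*}
  (\bN\rtimes\bG)\big/(\{1\}\times\bH)\;\cong\;\bN\rtimes(\bG/\bH),
\end{equation*}
the semidirect product on the right using the original $\bG/\bH$-action. The only thing to verify is that this map is a homomorphism, which holds precisely because the $\bG$-action on $\bN$ factors through $q$; everything else is the first-isomorphism theorem.

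It then remains to invoke permanence of type-I under quotient \emph{groups}: the map $q$ induces a surjection of full group $C^*$-algebras $C^*(\bN\rtimes\bG)\twoheadrightarrow C^*\!\big(\bN\rtimes(\bG/\bH)\big)$, and a quotient of a postliminal $C^*$-algebra is again postliminal (the companion, for quotients, of the subalgebra permanence \cite[Propositions 4.2.4 and 4.3.5]{dixc} used repeatedly above; see \cite[Chapter 4]{dixc}). Hence $\bN\rtimes(\bG/\bH)$ is type-I, which is exactly the conclusion in each of the two cases. The only substantive ingredient — everything else being formal pullback bookkeeping — is this quotient-permanence of the type-I property, so that is where I would be careful to cite the right statement.
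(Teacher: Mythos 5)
Your proposal is correct and is essentially the paper's own argument: pull the action back along $\bG\to\bG/\bH$, observe that $\bN\rtimes(\bG/\bH)$ is a quotient group of $\bN\rtimes\bG$, and conclude via quotient-permanence of postliminality for the full group $C^*$-algebras \cite[Proposition 4.3.5]{dixc}. Nothing further is needed.
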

\begin{proof}
  Consider an action of $\bG/\bH$ on a second-countable locally compact type-I group $\bN$. It restricts to an action of $\bG$ along the quotient $\bG\to \bG/\bH$, and we correspondingly have a quotient
  \begin{equation*}
    \bN\rtimes \bG\to \bN\rtimes (\bG/\bH)
  \end{equation*}
  of semidirect products and hence one between their attached full $C^*$-algebras. Since $C^*(\bN\rtimes \bG)$ is assumed type-I, its quotient $C^*(\bN\rtimes (\bG/\bH))$ must be too \cite[Proposition 4.3.5]{dixc}.

  The proof applies uniformly whether $\bN$ is the underlying vector space of a finite-dimensional $\bG/\bH$-representation or not, so this argument delivers both claims.
\end{proof}

We also have the following simple remark on how the various type-I-preservation conditions relate to one another.

\begin{lemma}\label{le:t1impl}
  Consider the following conditions for a second-countable locally compact group $\bG$.
  \begin{enumerate}[(a)]
  \item\label{item:3} $\bG$ is compact.
  \item\label{item:4} $\bG$ is type-I-preserving.
  \item\label{item:5} $\bG$ is linearly type-I-preserving. 
  \end{enumerate}
  We then have
  \begin{equation*}
    \text{\Cref{item:3} $\Rightarrow$ \Cref{item:4} $\Rightarrow$ \Cref{item:5}}.
  \end{equation*}
\end{lemma}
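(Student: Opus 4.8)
The plan is to establish the two implications separately, each reducing to material already in hand.

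For \Cref{item:3} $\Rightarrow$ \Cref{item:4} I would start from an arbitrary continuous action of a compact group $\bG$ on a second-countable type-I locally compact group $\bN$ and simply observe that the resulting semidirect product sits inside the extension
\begin{equation*}
  \{1\}\to \bN\to \bN\rtimes \bG\to \bG\to \{1\}.
\end{equation*}
Since $\bN$ is type-I and $\bG$ is compact, \Cref{cor:ext} applies verbatim and yields that $\bN\rtimes\bG$ is type-I; note also that $\bN\rtimes\bG$ is again second-countable, being a semidirect product of second-countable groups. As the action was arbitrary, this is precisely the assertion that $\bG$ is type-I-preserving. (This is of course the same fact recalled at the start of \Cref{subse:pos}, which one could alternatively deduce directly from the preservation of postliminality under crossed products by compact groups.)

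For \Cref{item:4} $\Rightarrow$ \Cref{item:5} I would note that a finite-dimensional linear representation $\bG\to GL(\bR^n)$ is in particular a continuous action of $\bG$ by automorphisms on the abelian locally compact group $(\bR^n,+)$, which is second-countable and type-I by virtue of being abelian. Feeding this choice $\bN=\bR^n$ into the type-I-preserving hypothesis gives that $\bR^n\rtimes\bG$ is type-I, which is exactly the defining condition for $\bG$ to be linearly type-I-preserving.

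I do not anticipate any genuine obstacle here: both steps are immediate consequences of \Cref{cor:ext} together with the triviality that Euclidean spaces, being abelian, are type-I. The only point meriting a word of care is that the standing second-countability conventions are maintained throughout, and they are, since $\bG$ compact second-countable and $\bN$ second-countable force $\bN\rtimes\bG$ to be second-countable as well. In short, the content of the lemma is already packaged inside \Cref{cor:ext}.
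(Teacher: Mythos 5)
Your proposal is correct and matches the paper's proof, which likewise derives the first implication from \Cref{cor:ext} and notes the second is immediate (since $\bR^n$ is an abelian, hence type-I, second-countable locally compact group). The extra details you spell out—second-countability of the semidirect product and the explicit extension—are exactly the routine verifications the paper leaves implicit.
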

\begin{proof}
  The first implication follows from \Cref{cor:ext}, while the second is immediate.
\end{proof}

A few other fairly straightforward observations will help transition between groups.

\begin{lemma}\label{le:changegp}
  Let $\bH\le \bG$ be a closed cocompact embedding of second-countable locally compact groups.
  \begin{enumerate}[(1)]
  \item\label{item:8} If $\bH$ is (linearly) type-I-preserving then so, respectively, is $\bG$.
  \item\label{item:9} If $\bH\le \bG$ is furthermore {\it cofinite} and $\bG$ is linearly type-I-preserving then so is $\bH$.
  \end{enumerate}
\end{lemma}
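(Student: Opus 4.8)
The plan is to prove the two parts by different mechanisms: \Cref{item:8} by exhibiting a cocompact type-I subgroup of the relevant semidirect product and lifting type-I along it, and \Cref{item:9} by inducing a representation up to $\bG$ and then cutting back down.

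For \Cref{item:8}, assume $\bH$ is type-I-preserving and that $\bG$ acts continuously on a type-I second-countable locally compact group $\bN$; the goal is that $\bN\rtimes\bG$ be type-I. Restricting the action along $\bH\le\bG$ produces a closed subgroup $\bN\rtimes\bH\le\bN\rtimes\bG$, which is type-I precisely because $\bH$ is type-I-preserving. I would first record that $\bN\rtimes\bH$ is cocompact: the assignment $(n,g)\mapsto g\bH$ identifies $(\bN\rtimes\bG)/(\bN\rtimes\bH)$ with $\bG/\bH$ (the $\bN$-coordinate is absorbed, as $\bG$ acts on $\bN$ by automorphisms), and the latter is compact by hypothesis. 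The idea is then to lift type-I from $\bN\rtimes\bH$ to $\bN\rtimes\bG$ through Kallman's criterion \cite[Theorem 1]{kal1}: a locally compact group is type-I once it contains a closed type-I subgroup whose homogeneous quotient admits an invariant probability measure.

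Kallman's theorem asks for a $(\bN\rtimes\bG)$-invariant probability measure on $(\bN\rtimes\bG)/(\bN\rtimes\bH)\cong\bG/\bH$. Under the identification above the $\bN\rtimes\bG$-action factors through the projection $\bN\rtimes\bG\to\bG$, with $\bG$ translating $\bG/\bH$, so it is enough to find a $\bG$-invariant probability measure on $\bG/\bH$. This is where cocompactness is genuinely used, and it is the step I expect to be the main obstacle: one must know that a closed cocompact subgroup satisfies $\Delta_\bG|_\bH=\Delta_\bH$, so that the compact homogeneous space $\bG/\bH$ carries an invariant (finite, hence normalizable) measure. Without such a measure the lift is not available — non-normal cocompact lifting of type-I is conjectural in general (\cite[Conjecture 1]{kal2}) — so the argument really does rest on the modular-function comparison. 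The linearly-type-I-preserving statement is proved identically, taking $\bN=\bR^n$ for a finite-dimensional $\bG$-representation, restricting it to $\bH$ (where $\bR^n\rtimes\bH$ is type-I because $\bH$ is linearly type-I-preserving), and running the same cocompact lift.

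For \Cref{item:9}, now $\bH\le\bG$ is cofinite (hence open and cocompact) and $\bG$ is linearly type-I-preserving; given a finite-dimensional representation $\bH\to GL(\bR^n)$ I want $\bR^n\rtimes\bH$ type-I. The plan is to induce: put $V:=\mathrm{Ind}_\bH^\bG\bR^n$, a finite-dimensional real $\bG$-representation of dimension $[\bG:\bH]\cdot n$. Since $\bG$ is linearly type-I-preserving, $V\rtimes\bG$ is type-I, and because $\bH$ is cofinite in $\bG$ the subgroup $V\rtimes\bH$ is cofinite, hence open, in $V\rtimes\bG$, so type-I by \Cref{re:cofin}. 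Finally, restricting $V$ to $\bH$ exhibits $\bR^n$ as a direct summand of $V|_\bH$ — concretely the subspace of induced functions supported on $\bH$ is an $\bH$-invariant copy of $\bR^n$ — so that, quotienting $V\rtimes\bH$ by the complementary $\bH$-invariant summand, $\bR^n\rtimes\bH$ is a quotient of $V\rtimes\bH$. As quotients of type-I $C^*$-algebras are type-I \cite[Proposition 4.3.5]{dixc} (compare the proof of \Cref{le:quot}), $\bR^n\rtimes\bH$ is type-I, completing the argument.
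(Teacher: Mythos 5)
Part~\Cref{item:9} of your proposal is, step for step, the paper's own argument: induce to $\bG$, use linear type-I preservation there, descend to the open cofinite subgroup via \Cref{re:cofin}, and then surject onto $\bR^n\rtimes\bH$ by killing the $\bH$-invariant complement of $\bR^n$ inside the restriction of the induced module. Your extra sentence identifying that complement (the functions supported off the identity coset) is precisely the justification the paper leaves implicit, and the appeal to \cite[Proposition 4.3.5]{dixc} for quotients is the same one made in \Cref{le:quot}.

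For part~\Cref{item:8} your outline also agrees with the paper's --- restrict the action to $\bH$, note that $\bM\rtimes\bH\le\bM\rtimes\bG$ is closed and cocompact with quotient $\bG/\bH$, and lift the type-I property --- but the justification you give for the lifting step contains a false assertion. It is \emph{not} true that a closed cocompact subgroup automatically satisfies $\Delta_{\bG}|_{\bH}=\Delta_{\bH}$: the Borel subgroup $B\le SL(2,\bR)$ is closed with compact quotient (the real projective line), yet $SL(2,\bR)$ is unimodular while $B$ is not, and correspondingly $SL(2,\bR)/B$ carries no invariant measure. So Kallman's criterion \cite[Theorem 1]{kal1} is not available on the strength of cocompactness alone --- indeed, as the paper itself remarks near \cite[Conjecture 1]{kal2}, lifting the type-I property along general closed cocompact embeddings is open. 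You correctly located the crux of the argument, but the proposed resolution fails. The paper instead invokes \Cref{cor:ext}, i.e.\ it treats $\bM\rtimes\bH$ as a \emph{normal} cocompact subgroup of $\bM\rtimes\bG$; this is what actually happens in every application of the lemma in the paper ($\bH$ is either normal in $\bG$, which makes $\bM\rtimes\bH$ normal, or cofinite, in which case the finite quotient trivially carries an invariant probability measure). To repair your version you must either add the normality (or cofiniteness) hypothesis so that $\bG/\bH$ is a compact group carrying its Haar probability measure, or verify the modular-function identity separately in whatever situation you apply the lemma; it does not come for free from cocompactness.
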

\begin{proof}
  An action of $\bG$ on a locally compact group $\bM$ (resulting from a linear representation or not) restricts to $\bH$ and we have a cocompact embedding
  \begin{equation*}
    \bM\rtimes \bH\le \bM\rtimes \bG. 
  \end{equation*}
  Claim \Cref{item:8} now follows from \Cref{cor:ext}.
  
  As for claim \Cref{item:9}, suppose $\bG$ is linearly type-I-preserving and let $\pi:\bH\to GL(V)$ be a finite-dimensional representation and
  \begin{equation*}
    \rho:=\mathrm{Ind}_{\bH}^{\bG}\pi:\bG\to GL(W)
  \end{equation*}
  its induction to $\bG$ (unproblematic to define, given that the embedding $\bH\le \bG$ is cofinite). Then
  \begin{itemize}
  \item $W\rtimes \bG$ is type-I by assumption;
  \item hence its cofinite closed subgroup $W\rtimes \bH$ is type-I by \Cref{re:cofin};
  \item so its quotient
    \begin{equation*}
      W\rtimes \bH\to V\rtimes \bH
    \end{equation*}
    is type-I because that property survives under taking group quotients. 
  \end{itemize}
  This concludes the proof.
\end{proof}

And also:

\begin{lemma}\label{le:prod}
  Let $\bG$ and $\bH$ be two locally compact groups, with $\bH$ type-I-preserving. If $\bG$ is (linearly) type-I-preserving then so, respectively, is $\bG\times \bH$.
\end{lemma}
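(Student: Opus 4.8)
The plan is to realise $\bN\rtimes(\bG\times\bH)$ as an iterated semidirect product $(\bN\rtimes\bG)\rtimes\bH$ and to peel off the two factors one at a time, applying the (linear) type-I-preservation of $\bG$ at the inner stage and the full type-I-preservation of $\bH$ at the outer stage. Accordingly, fix an action of $\bG\times\bH$ on a second-countable type-I group $\bN$; in the linear case $\bN=\bR^n$ arises from a finite-dimensional representation $\bG\times\bH\to GL(\bR^n)$.

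First I would restrict the given action along the inclusion $g\mapsto(g,e)$ to obtain an action of $\bG$ on $\bN$ and form $\bN\rtimes\bG$. This group is second-countable, locally compact, and type-I: in the unrestricted case because $\bG$ is type-I-preserving and $\bN$ is type-I, and in the linear case because the restricted map $\bG\to GL(\bR^n)$ is a finite-dimensional representation and $\bG$ is linearly type-I-preserving. Thus in \emph{either} scenario the intermediate group $\bN\rtimes\bG$ is type-I.

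Next I would equip $\bN\rtimes\bG$ with an $\bH$-action by letting $\bH$ act on $\bN$ through the restriction along $h\mapsto(e,h)$ and trivially on the $\bG$-factor. Because $\bG$ and $\bH$ commute inside $\bG\times\bH$, their two actions on $\bN$ commute, and a direct check shows that this prescription respects the semidirect-product multiplication of $\bN\rtimes\bG$, so it genuinely defines an action by automorphisms; the same commutation yields the canonical identification $(\bN\rtimes\bG)\rtimes\bH\cong\bN\rtimes(\bG\times\bH)$. Since $\bN\rtimes\bG$ is type-I by the previous step and $\bH$ is type-I-preserving, the outer product $(\bN\rtimes\bG)\rtimes\bH$, and hence $\bN\rtimes(\bG\times\bH)$, is type-I, settling both the linear and the unrestricted claims at once.

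The point that needs care, and which dictates the asymmetry in the hypotheses, is that the intermediate group $\bN\rtimes\bG$ is in general \emph{non-abelian} even when $\bN$ was a vector space. Consequently the outer stage cannot be handled by a merely linear type-I-preservation property of $\bH$: one genuinely needs $\bH$ to be type-I-preserving in the full sense of \Cref{def:t1pres}, applied to the (possibly non-linear) type-I input $\bN\rtimes\bG$. This is precisely why the lemma asks that $\bH$ be type-I-preserving while allowing $\bG$ to be only linearly so.
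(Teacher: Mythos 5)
Your proposal is correct and follows essentially the same route as the paper: restrict the action to $\bG$, form the type-I intermediate group $\bN\rtimes\bG$, extend the $\bH$-action trivially on the $\bG$-factor, and identify $\bN\rtimes(\bG\times\bH)$ with $(\bN\rtimes\bG)\rtimes\bH$ to invoke the full type-I-preservation of $\bH$. Your closing remark on why $\bH$ must be type-I-preserving in the unrestricted sense is a useful clarification that the paper leaves implicit.
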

\begin{proof}

  Consider an action of $\bG\times \bH$ on a type-I group $\bM$ (linear or not, depending on which branch of the statement we are considering). That action restricts to either
  \begin{equation*}
    \bG\le \bG\times \bH\text{ or }\bH\le \bG\times \bH,
  \end{equation*}
  and upon extending the $\bH$-action to $\bM\rtimes \bG$ by acting trivially on the $\bG$ factor we have an isomorphism
  \begin{equation}\label{eq:mgh}
    \bM\rtimes(\bG\times\bH) \cong (\bM\rtimes \bG)\rtimes \bH.
  \end{equation}
  The initial semidirect product $\bM\rtimes \bG$ is type-I by the assumption on $\bG$ (either of the two), whence \Cref{eq:mgh} is type-I because $\bH$ is assumed type-I-preserving.
\end{proof}

The distinction between type-I preservation and its linear counterpart is already visible for discrete groups: this is obvious from \Cref{cor:disc}, say, but we prove a more general version thereof.

\begin{theorem}\label{th:disc-by-cpct}
  Let $\bG$ be a second-countable locally compact group fitting into an exact sequence
  \begin{equation}\label{eq:dgk}
    \{1\}\to \bD\to \bG\to \bK\to \{1\},
  \end{equation}
  with $\bD$ discrete and $\bK$ compact.
  \begin{enumerate}[(1)]
  \item\label{item:12} $\bG$ is type-I-preserving if it is compact, and the converse holds if $\bK$ is Lie.
  \item\label{item:13} $\bG$ is linearly type-I-preserving if it has a cocompact, discrete, normal abelian subgroup $\bA$ so that the orbit of every infinite-order element of $\widehat{\bA}$ under the conjugation action is infinite.
  \item\label{item:haschar} Conversely, if some cocompact normal, discrete subgroup $\bA\trianglelefteq \bG$ has an infinite-order character with finite orbit under the conjugation $\bG$-action then $\bG$ is not linearly type-I-preserving. 
  \item\label{item:14} $\bG$ is of type I if $\bD$ is virtually abelian, and the converse holds if $\bK$ is Lie.
  \end{enumerate}
\end{theorem}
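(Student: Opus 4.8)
The four parts are essentially independent, and three of the six implications are immediate from what precedes. The ``if compact'' half of \Cref{item:12} is \Cref{le:t1impl}. For \Cref{item:14}: if $\bD$ is virtually abelian it is type-I by Thoma's theorem (\cite{thoma,tt}), so the compact extension $\bG$ is type-I by \Cref{cor:ext}; conversely, when $\bK$ is Lie, applying \Cref{pr:cpctliequot} to the discrete normal subgroup $\bD\trianglelefteq\bG$ with compact Lie quotient $\bK$ shows $\bD$ is type-I, hence virtually abelian again by Thoma. The work therefore lies in the two linear statements and the converse of \Cref{item:12}.

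\emph{Part \Cref{item:13}.} Since $\bG/\bA$ is compact, \Cref{cor:ext} reduces the claim to the type-I-ness of $V\rtimes\bA$, which I would attack by the Mackey machine applied to the abelian normal subgroup $V$ (as in the analyses of \cite{am} used already in \Cref{con:nt1}): it suffices to show the dual action of the discrete abelian group $\bA$ on $\widehat V\cong\bR^n$ is smooth, the little groups being subgroups of $\bA$ and hence automatically type-I. The commuting operators $\pi(\bA)$ are simultaneously triangularizable over $\bC$ with finitely many weights $\lambda_1,\dots,\lambda_m\colon\bA\to\bC^\times$, which $\bG$ permutes in finite orbits. Writing $\lambda_j=\chi_j\rho_j$ with $\chi_j\in\widehat\bA$ unitary and $\rho_j\colon\bA\to\bR_{>0}$ radial, each $\chi_j$ inherits a finite orbit and so, by hypothesis, has finite order. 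The crux is to rule out a radial escape: a $\rho_j$ of infinite order would cut out a finite-rank torsion-free quotient of $\bA$ on which the compact group $\bG/\bA$ acts through a finite automorphism group, and the dual of that quotient would supply an infinite-order character of $\widehat\bA$ with finite orbit, against the hypothesis. Hence every $\lambda_j$ has finite order, $\bA$ acts on $V$ through a finite quotient, and $V\rtimes\bA$ is type-I by \Cref{re:cofin}; \Cref{cor:ext} then gives $V\rtimes\bG$.

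\emph{Part \Cref{item:haschar}.} Here I would exhibit a linear representation whose dual action is non-smooth. Replacing $\bG$ by the finite-index stabilizer $\bG_\chi$ (harmless by the cofinite descent in \Cref{le:changegp}) we may assume $\chi$ is $\bG$-invariant of infinite order. Taking a finite-dimensional irreducible constituent $\rho$ of $\mathrm{Ind}_\bA^\bG\chi$ (finite-dimensional because $\bG/\bA$ is compact) yields a unitary $\bG$-representation with $\rho|_\bA=\chi\cdot\mathrm{id}$; its realification $V$ is a genuine real representation on which $\bA$ acts, dually, by the dense rotation $\overline\chi$. The $\bA$-orbits then wind densely but properly inside spheres of $\widehat V$, so the orbit is not locally closed and the action is non-smooth, whence $V\rtimes\bG$ is not type-I by the criterion of \cite{am,glm} invoked in \Cref{con:nt1}. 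The delicate point---and the main obstacle---is that a single such scalar extension can, when $\rho(\bG)$ fills the whole scalar circle (for instance when $\bG$ is connected), produce full closed circles and a \emph{smooth} action; the remedy is to use two extensions of $\chi$ differing by a character of $\bG/\bA$, chosen so that the resulting homomorphism of $\bG$ into a $2$-torus has non-closed image, and verifying this incommensurability in general is the technical heart of the argument.

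\emph{Converse of \Cref{item:12}.} Suppose $\bG$ is type-I-preserving with $\bK$ Lie; then $\bG$ is type-I and, being an extension of a Lie group by a discrete one, is itself a Lie group (\cite{gls}), so its identity component $\bG_0$ is open and normal with discrete component group $\Gamma=\bG/\bG_0$. If $\Gamma$ is infinite, the generalized Bernoulli action of $\bG$ (through $\Gamma$) on the type-I abelian group $\bN=\bigoplus_\Gamma\bZ/2$ is non-smooth, so $\bN\rtimes\bG$ is not type-I---contradicting type-I-preservation; this non-linear test is exactly where the full (rather than merely linear) hypothesis is used. Hence $\Gamma$ is finite, $\bG$ is compactly generated, and the cocompact discrete $\bD$ is finitely generated. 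Now \Cref{item:14} makes $\bD$ virtually abelian; choosing a finite-index abelian normal $\bA\trianglelefteq\bG$ and using that $\mathrm{Aut}(\bA)$ is discrete (as $\bA$ is finitely generated), the compact group $\bG/\bA$ acts on $\bA$, hence on $\widehat\bA$, through a finite quotient, so every orbit in $\widehat\bA$ is finite. Linear type-I-preservation together with \Cref{item:haschar} then forbids infinite-order characters altogether, so $\bA$ has bounded exponent; being finitely generated it is finite, whence $\bD$ is finite and $\bG$ compact.
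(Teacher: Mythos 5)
Your treatment of \Cref{item:14} and of the forward direction of \Cref{item:12} coincides with the paper's, and your route to the converse of \Cref{item:12} (kill the component group with a wreath-product test on $\bigoplus_\Gamma\bZ/2$, deduce that $\bD$ is finitely generated, then invoke \Cref{item:haschar}) is genuinely different from, and if it worked shorter than, the paper's reduction via Moore's cohomology \cite{moore-ext} and the $\bF\rtimes\bD$ trick. The problem is that it funnels everything through \Cref{item:haschar}, and that is exactly where your argument has a hole which you flag but do not close. A single finite-dimensional extension $\rho$ of the invariant infinite-order character $\chi$ can perfectly well make the $\bG$-orbits in $\widehat V$ closed --- in your own problematic case $\bG=\bR$, $\bA=\bZ$, $\chi(n)=e^{in\theta}$, a scalar extension gives $\bC\rtimes\bR$ acting by rotations, which \emph{is} type I --- so one must produce a pair of extensions with incommensurable scalar parts, and proving that such a pair always exists (across arbitrary compact $\bK=\bG/\bA$, where extensions of $\chi$ need not even be one-dimensional and the available ``twists'' are governed by $\widehat\bK$ and a Mackey obstruction) is precisely the content of the statement, not a verification one can defer. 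The paper avoids this entirely: it never builds a representation of the original $\bG$, but quotients repeatedly (legitimate by \Cref{le:quot} and \Cref{le:changegp}) through Moore's $H^2$ computations until $\bG$ visibly contains $\bZ\oplus\bT^d$ or $\bR\oplus\bT^{d-1}$ cofinitely, and only then plays the irrational-rotation and Mautner cards. Note that your converse of \Cref{item:12} needs much less than the general \Cref{item:haschar}: there $\bA$ is finitely generated and the $\bG$-action factors through a finite group, so one can quotient down to $\bZ$ with trivial action and use $\bC\rtimes_\theta\bZ$ directly; as written, though, it cites the unproven general case.

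In \Cref{item:13} your overall strategy (show $\pi(\bA)$ is finite, then conclude via cofiniteness and \Cref{cor:ext}) matches the paper's, and your torsion-free-quotient argument against the radial parts is sound, but the inference ``every weight $\lambda_j$ has finite order, hence $\bA$ acts on $V$ through a finite quotient'' is false as stated: a commuting family can be simultaneously triangularized with all weights trivial and still have infinite image, e.g.\ $\bZ$ acting on $\bR^2$ by $\left(\begin{smallmatrix}1&n\\0&1\end{smallmatrix}\right)$. You must also kill the unipotent parts of the Jordan decomposition. This is patchable by your own device --- the logarithms of the unipotent parts give a homomorphism from $\bA$ to a vector group, whose nontriviality would yield a torsion-free quotient of $\bA$ on which a cofinite subgroup of $\bG$ acts trivially, hence an infinite-order character of $\widehat\bA$ with finite orbit --- but the step is missing. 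The paper sidesteps the issue by working with the closure $\overline{\pi(\bA_0)}\cong\bR^n\times\bT^m\times\bD$ as a Lie group and showing the entire closure is finite, which disposes of semisimple and unipotent parts at once.
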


\begin{proof}
  Part \Cref{item:14} follows immediately from \Cref{cor:ext} and \Cref{pr:cpctliequot} in conjunction with the previously-cited theorem of Thoma characterizing discrete type-I groups, so we focus on the two other claims.

  {\bf ($\Leftarrow$) part \Cref{item:12}.} Immediate from \Cref{le:t1impl}.
      
  {\bf ($\Rightarrow$) part \Cref{item:12}: $\bD$ is virtually abelian.} $\bG$ itself is type-I in both cases by \Cref{re:ist1}, hence so is its cocompact subgroup $\bD$ (\Cref{pr:cpctliequot}). We conclude via Thoma's theorem.
  
  {\bf ($\Rightarrow$) part \Cref{item:12}: $\bD$ can be assumed abelian.} Given that $\bD$ is virtually abelian, it has a finite-index abelian {\it characteristic} subgroup \cite[Lemma 21.1.4]{km} (i.e. one invariant under all automorphisms of $\bD$). In particular such a group will be normal in $\bG$, and we can substitute it for $\bD$ in the statement. 

  This abelianness assumption on $\bD$ is in force throughout the rest of the proof, and in \Cref{item:13} the sought-after $\bA$ will be $\bD$. In other words, the goal is now to show that
  \begin{itemize}
  \item $\bD$ is finite in \Cref{item:12};
  \item while in \Cref{item:13}, the infinite-order elements of $\widehat{\bD}$ have infinite orbits under the conjugation action by $\bK$ (or $\bG$).
  \end{itemize}
    
  {\bf ($\Rightarrow$) part \Cref{item:12}: finitely-generated-kernel surjections onto direct products.} $\bD$ being abelian, the exact sequence \Cref{eq:dgk} corresponds to a cohomology class in $H^2(\bK,\bD)$, where the cohomology groups are those introduced on \cite[p.43]{moore-ext} (see also \cite[top of p.44, before \S 2]{moore-ext} for a sketch of the extension-cohomology correspondence and \cite[Theorem 7.8]{vrd} for a detailed treatment of {\it central} extensions).
 
  Because $\bK$ is compact, \cite[Lemma 2.2]{moore-ext} shows that the cohomology class in question is in the image of
  \begin{equation*}
    H^2(\bK,\bD_1)\to H^2(\bK,\bD)
  \end{equation*}
  attached to the inclusion $\bD_1\le \bD$ of some $\bK$-invariant finitely-generated subgroup. But this means that the cohomology class attached to the extension
  \begin{equation*}
    \{1\}\to \bD/\bD_1\to \bG/\bD_1\to \bK\to \{1\}
  \end{equation*}
  is trivial, i.e. this latter extension splits:
  \begin{equation*}
    \bG/\bD_1\cong (\bD/\bD_1)\rtimes \bK,
  \end{equation*}
  We retain this setup of an extension
  \begin{equation}\label{eq:dd1k}
    \{1\}\to \bD_1\to \bG\to (\bD/\bD_1)\rtimes \bK\to \{1\}
  \end{equation}
  for finitely-generated abelian $\bD_1$ throughout, with the product $(\bD/\bD_1)\rtimes \bK$ assumed direct in case \Cref{item:13}.
  
    {\bf ($\Rightarrow$) part \Cref{item:12}: semidirect products.} That is, we consider the particular case where \Cref{eq:dgk} splits, and thus
  \begin{equation*}
    \bG\cong \bD\rtimes \bK;
  \end{equation*}
  the goal will be to assume $\bD$ infinite and derive a contradiction.

  Consider the countable group
  \begin{equation*}
    \bF:=\{\text{finitely-supported functions }\bD\to \bZ/2\}
  \end{equation*}
  with its obvious additive structure ($\bZ/2$ is there only to fix ideas: it can be any fixed finite abelian group for our purposes). $\bD$ acts on $\bF$ by translation:
  \begin{equation*}
    (gf)(g') = f(g'g),\ \forall g,g'\in \bD,\ \forall f\in \bF. 
  \end{equation*}
  That translation action extends to one by $\bG\cong \bD\rtimes \bK$, and the semidirect product $(\bF\rtimes \bD)\rtimes \bK$ is type-I by assumption. But then so is the countable discrete group $\bF\rtimes \bD$ (being normal cocompact: \Cref{pr:cpctliequot}), and Thoma's theorem provides a cofinite abelian subgroup
  \begin{equation*}
    \bA\le \bF\rtimes \bD.
  \end{equation*}
  We can now obtain our contradiction: $\bA$ has finite index, so the intersections
  \begin{equation*}
    \bF\cap \bA\text{ and }\bD\cap \bA
  \end{equation*}
  must both be infinite and in particular non-trivial. But this contradicts the abelianness of $\bA$, since any non-trivial $g\in \bD\cap \bA$ will translate the support of any non-trivial $f\in \bF\cap \bA$, so that $gfg^{-1}\ne f$.  

  {\bf ($\Rightarrow$) part \Cref{item:12}: conclusion.} We have already disposed of the semidirect-product quotient in \Cref{eq:dd1k}, which we now know is compact; we can thus absorb the $\bD/\bD_1$ kernel into $\bK$:
  \begin{equation}\label{eq:dgak-simple}
    \{1\}\to \bD_1\to \bG\to \bK\to \{1\}
  \end{equation}

  $\bD_1$ is finitely-generated abelian, so its finiteness amounts to the vanishing of its free abelian quotient $\bZ^d$, $d\in \bZ_{\ge 0}$ by its torsion. To that end, we will assume that in fact $\bD_1\cong \bZ^d$, $d\ge 1$ and show we cannot even have {\it linear} type-I preservation.

  The compact group $\bK$ acts on $\bD_1\cong \bZ^d$ through a finite quotient because
  \begin{equation*}
    \mathrm{Aut}(\bZ_1^d)\cong GL(d,\bZ)
  \end{equation*}
  is discrete. Some finite-index subgroup of $\bK$ thus acts trivially on $\bD_1$, so for our purposes we may as well assume that \Cref{eq:dgak-simple} is central (since the passage to a finite-index subgroup will not affect linear type-I preservation \Cref{le:changegp}).
  
  We can now further surject $\bD_1\to \bZ$ (since the kernel of such a surjection is central in $\bG$), so that \Cref{eq:dgak-simple} is a central extension
  \begin{equation}\label{eq:zgak}
    \{1\}\to \bZ\to \bG\to \bK\to \{1\}.
  \end{equation}
  It again corresponds to a cohomology class in $H^2(\bK,\bZ)$, and the discussion on \cite[p.61]{moore-ext} provides an identification
  \begin{equation*}
    H^2(\bK,\bZ)\cong H^1(\bK,\bS^1)\cong \mathrm{Hom}(\bK,\bS^1). 
  \end{equation*}
  Running through that argument, said identification recovers a central extension
  \begin{equation*}
    \{1\}\to \bZ\to \bullet\to \bK\to \{1\}
  \end{equation*}
  from a morphism $\varphi:\bK\to \bS^1$ as a pullback
  \begin{equation*}
    \begin{tikzpicture}[auto,baseline=(current  bounding  box.center)]
      \path[anchor=base] 
      (0,0) node (ll) {$\{1\}$}
      +(2,0) node (l) {$\bZ$}
      +(4,.5) node (u) {$\bullet$}
      +(4,-.5) node (d) {$\bR$}
      +(6,.5) node (ur) {$\bK$}
      +(6,-.5) node (dr) {$\bS^1$}
      +(8,0) node (rr) {$\{1\}$,}
      ;
      \draw[->] (ll) to[bend left=0] node[pos=.5,auto] {$\scriptstyle $} (l);
      \draw[->] (l) to[bend left=6] node[pos=.5,auto] {$\scriptstyle $} (u);
      \draw[->] (l) to[bend right=6] node[pos=.5,auto,swap] {$\scriptstyle $} (d);
      \draw[->] (u) to[bend left=0] node[pos=.5,auto] {$\scriptstyle $} (ur);
      \draw[->] (d) to[bend left=0] node[pos=.5,auto] {$\scriptstyle $} (dr);
      \draw[->] (u) to[bend left=0] node[pos=.5,auto] {$\scriptstyle $} (d);
      \draw[->] (ur) to[bend left=0] node[pos=.5,auto] {$\scriptstyle \varphi$} (dr);
      \draw[->] (ur) to[bend left=6] node[pos=.5,auto] {$\scriptstyle $} (rr);
      \draw[->] (dr) to[bend right=6] node[pos=.5,auto,swap] {$\scriptstyle $} (rr);
    \end{tikzpicture}
  \end{equation*}
  where the bottom extension results from the usual exponential map
  \begin{equation*}
    \bR\ni t\mapsto \exp(2\pi i t)\in \bS^1.
  \end{equation*}
  It follows that we can replace $\bG$ by its quotient obtained upon substituting the image $\varphi(\bK)\subset \bS^1$ for $\bK$; the problem has thus been further reduced to abelian $\bK$ (and in fact a closed subgroup of the circle).

  Now, since \Cref{eq:zgak} is a central extension with abelian quotient, the commutator
  \begin{equation*}
    \bG\times \bG\ni (g_1,g_2)\mapsto [g_1,g_2]:=g_1g_2g_1^{-1}g_2^{-1}\in \bZ
  \end{equation*}
  induces a continuous bilinear map $\bK^2\to \bZ$, trivial because $\bK$ is compact. $\bG$ is thus abelian, and we can fall back on the already-settled case of LCA groups to conclude that indeed the group $\bD_1$ of \Cref{eq:dgak-simple} is finite. This finishes the proof of \Cref{item:12}.

{\bf Part \Cref{item:13}.} Suppose $\bA\trianglelefteq \bG$ is as in the statement. The plan will be to show that $\bA$ operates on $V$ through a finite quotient. Assuming this for now, we can proceed as follows.
  \begin{itemize}
  \item Because $\pi|_{\bA}$ factors through a finite group, $\bG$ acts on $V^*\cong \widehat{V}$ with compact orbits;
  \item and hence the {\it regular embedding} condition of \cite[p.186, Definition]{mack-unit} is satisfied (e.g. by \cite[Theorem 1]{glm});
  \item so \cite[Theorem 3.12]{mack-unit} applies (in particular the last paragraph of that statement, on semidirect products with abelian kernel) to conclude that $V\rtimes \bA$ is of type I.
  \end{itemize}
  Since
  \begin{equation*}
    V\rtimes \bA\le V\rtimes \bG
  \end{equation*}
  is cocompact and normal larger group must also be of type I by \Cref{cor:ext}, and we are done. It thus remains to prove the claim that $\pi(\bA)\subset GL(V)$ is finite.

  The representation $\pi:\bG\to GL(V)$ factors through the quotient
  \begin{equation*}
    \begin{tikzpicture}[auto,baseline=(current  bounding  box.center)]
      \path[anchor=base] 
      (0,0) node (ll) {$\{1\}$}
      +(2,.5) node (lu) {$\bA$}
      +(2,-.5) node (ld) {$\bA_0$}
      +(4,.5) node (mu) {$\bG$}
      +(4,-.5) node (md) {$\bG_0$}
      +(6,0) node (r) {$\bG/\bA$}
      +(8,0) node (rr) {$\{1\}$,}
      ;
      \draw[->] (ll) to[bend left=6] node[pos=.5,auto] {$\scriptstyle $} (lu);
      \draw[->] (ll) to[bend right=6] node[pos=.5,auto] {$\scriptstyle $} (ld);
      \draw[->] (lu) to[bend left=0] node[pos=.5,auto] {$\scriptstyle $} (mu);
      \draw[->] (ld) to[bend right=0] node[pos=.5,auto,swap] {$\scriptstyle $} (md);
      \draw[->] (mu) to[bend left=6] node[pos=.5,auto] {$\scriptstyle $} (r);
      \draw[->] (md) to[bend right=6] node[pos=.5,auto] {$\scriptstyle $} (r);
      \draw[->] (r) to[bend left=0] node[pos=.5,auto] {$\scriptstyle $} (rr);
      \draw[->] (lu) to[bend left=0] node[pos=.5,auto] {$\scriptstyle $} (ld);
      \draw[->] (mu) to[bend left=0] node[pos=.5,auto] {$\scriptstyle $} (md);
    \end{tikzpicture}
  \end{equation*}
  where
  \begin{equation*}
    \bA_0:=\bA/\ker(\pi|_{\bA}). 
  \end{equation*}
  The closure $\overline{\pi(\bA_0)}$ in $GL(V)$ is an abelian Lie group, and hence of the form
  \begin{equation}\label{eq:rtd}
    \overline{\pi(\bA_0)}\cong \bR^n\times \bT^m\times \bD,\quad \bD\text{ finitely-generated discrete abelian}
  \end{equation}
  (e.g. by \cite[Theorem 4.2.4]{de}). The compact group 
  \begin{equation*}
    \bG_0/\bA_0 \cong \bG/\bA =: \bK
  \end{equation*}
  acts by conjugation on $\bA_0$, and that action extends to a continuous one on \Cref{eq:rtd}. Since the automorphism group of the latter Lie group is Lie again \cite[Theorem 2]{hoch-auto}, the action of $\bK$ on $\overline{\pi(\bA_0)}$ factors through a compact Lie quotient $\widetilde{\bK}$ of $\bK$.
  
  At the same time, the action of the compact group $\bK$ on the discrete group $\bA_0$ has finite orbits; it follows that \Cref{eq:rtd} has a dense set $\pi(\bA_0)\cong \bA_0$ of elements with finite orbits under the action of the compact Lie group $\widetilde{\bK}$, whence that action must factor through a finite group. In conclusion, the centralizer of $\pi(\bA_0)\cong \bA_0$ in $\bG_0$ is cofinite.

% %   It follows in particular that $\overline{\pi(\bA_0)}$ is also the closure of (the image of) a finitely-generated subgroup $\bA_{00}\le \bA_0$. Because every orbit of the conjugation action of the compact group
% %   \begin{equation*}
% %     \bG_0/\bA_0 \cong \bG/\bA
% %   \end{equation*}
% %   on the discrete group $\bA_0$ is finite, the subgroup of $\bG_0$ stabilizing every element of $\bA_{00}$ is cofinite. But then, because $\pi$ is one-to-one on $\bA_0$, the latter's centralizer in $\bG_0$ is cofinite.
% %
  
  The infinite-orbit assumption then implies that {\it all} characters of $\bA_0$ are of finite order, and hence so are those of \Cref{eq:rtd}. This, in turn, implies that the continuous Cartesian factors $\bR^n$ and $\bT^m$ in that product are trivial, and $\bD$ is finite.

  % % 
  % % {\bf The ($\Leftarrow$) implications.} For \Cref{item:12} this follows from \Cref{le:t1impl}. On the other hand, suppose $\bA\trianglelefteq \bG$ is as in \Cref{item:13}. A finite-dimensional $\bG$-representation $\pi:\bG\to GL(V)$, restricted to $\bA$, will then by assumption be supported on finite-order orbits of $\widehat{\bA}$. This means that
  % % \begin{itemize}
  % % \item $\bA$ operates on $V$ through a finite quotient and thus with finite orbits;
  % % \item so that $\bG$ acts on $V^*\cong \widehat{V}$ with compact orbits;
  % % \item and hence the {\it regular embedding} condition of \cite[p.186, Definition]{mack-unit} is satisfied (e.g. by \cite[Theorem 1]{glm});
  % % \item so \cite[Theorem 3.12]{mack-unit} applies (in particular the last paragraph of that statement, on semidirect products with abelian kernel) to conclude that $V\rtimes \bA$ is of type I.
  % % \end{itemize}
  % % Since
  % % \begin{equation*}
  % %   V\rtimes \bA\le V\rtimes \bG
  % % \end{equation*}
  % % is cocompact and normal the latter must also be of type I by \Cref{th:descnorm}, and we are done. 
  % % 

  {\bf Part \Cref{item:haschar}.} Consider a character $\chi:\bA\to \bS^1$ as in the statement. Since it has finite orbit under the $\bG$-action, passage to a cofinite subgroup (which does not affect linear type-I preservation by \Cref{le:changegp}) allows us to assume that $\chi$ is in fact $\bG$-invariant.

  We can now further annihilate the (normal) subgroup of $\bA$ generated by commutators
  \begin{equation*}
    [g,a] = gag^{-1}a^{-1},\ g\in \bG,\ a\in \bA
  \end{equation*}
  thus reducing to the case of central $\bA$. The character $\chi$ descends to the respective quotient of $\bA$ (by $\bG$-invariance), so it is still present under the centrality assumption.

  The central extension
  \begin{equation}\label{eq:agk}
    \{1\}\to \bA\to \bG\to \bK\to \{1\}
  \end{equation}
  corresponds to a cohomology class in
  \begin{equation}\label{eq:h2lim}
    H^2(\bK,\bA)\cong \varinjlim_i H^2(\bK_i,\bA),
  \end{equation}
  where
  \begin{equation*}
    \bK\cong \varprojlim \bK_i
  \end{equation*}
  expresses $\bK$ as an inverse limit of compact Lie groups \cite[\S 4.6, Theorem]{mz} and \Cref{eq:h2lim} is \cite[Theorem 2.3 (2)]{moore-ext}. It thus follows that the kernel of some $\bK\to \bK_i$ pulls down as a normal subgroup of $\bG$, and quotienting by it we can assume that $\bK$ is Lie, and, for good measure, also connected (the connected component has finite index).
  
  A quotient of $\bK$ by some finite central subgroup $\bF\le \bK$ is a product of a torus and simple compact factors \cite[Theorem 9.24]{hm3}. We can first regroup $\bF$ together with $\bA$ into a central extension
  \begin{equation*}
    \{1\}\to \bA\to \bullet\to \bF\to \{1\},
  \end{equation*}
  which becomes abelian after quotienting by a finite subgroup of $\bA$: because $\bA$ is central and $\bF$ abelian the commutator on $\bullet$ descends to a bilinear map $\bF\times \bF\to \bA$, which must take values in a finite subgroup because $\bF$ is finite.

  It follows, then, that we can further assume that
  \begin{equation*}
    \bK\cong \bT^d\times \prod_{i=1}^n \bS_i,\quad \bS_i\text{ simple, compact, connected Lie groups}.
  \end{equation*}
  Semisimple compact Lie groups have compact universal covers \cite[Theorem 6.6]{hm3}, so the $\bS_i$-components of a 2-cocycle representing \Cref{eq:agk} as an element of
  \begin{equation*}
    H^2(\bK,\bA)\cong H^2(\bT^d,\bA)\times \prod_{i=1}^n H^2(\bS_i,\bA)
  \end{equation*}
  will take values in some finite subgroup of $\bA$. Quotienting again, the semisimple factors fall out as subgroups, and can be annihilated. \Cref{eq:agk} has become
  \begin{equation*}
    \{1\}\to \bA\to \bG\to \bT^d\to \{1\}.
  \end{equation*}
  Note furthermore that we can assume $\bG$ is abelian, by the same commutator bilinear map argument employed above.
  
  The cocycle corresponding to this extension factors through some finitely-generated abelian subgroup $\bA_f\le \bA$ \cite[Lemma 2.2]{moore-ext}; it follows that $\bG$ contains 
  \begin{equation*}
    \{1\}\to \bA_f\to \bG_f\to \bT^d\to \{1\}.
  \end{equation*}
  as an open subgroup, with corresponding quotient
  \begin{equation*}
    \bG/\bG_f\cong \bA/\bA_f
  \end{equation*}
  There are now two cases to consider:
  \begin{enumerate}[(a)]
  \item\label{item:aafa} {\bf The quotient $\bA/\bA_f$ has an infinite-order character.} The quotient $\bA/\bA_f$ then acts on $\bC$ via some infinite-order character. The image through that character must then be an infinite countable subgroup of the circle, so it is not locally closed in $\bC$ (i.e. relatively open in its closure). That $\bC\rtimes (\bA/\bA_f)$ cannot be of type I follows from \cite[p.110, Corollary to Theorem 9]{am} (that result only discusses ``non-transitive quasi-orbits'', but by \cite[Theorem 1]{glm} we have these precisely under the stated condition).
  \item\label{item:aafb} {\bf The quotient $\bA/\bA_f$ has no infinite-order characters.} We will see later, in the course of the proof of \Cref{cor:disc}, that this means precisely that it has {\it bounded order} \cite[\S 8]{kap}: it is torsion, with a uniform bound on the orders of its elements.

    It is not difficult to see then that some finitely-generated extension of $\bA_f$ in $\bA$ is {\it pure} \cite[\S 7]{kap}, whence \cite[Theorems 5 and 6]{kap} imply that that newly-enlarged $\bA_f$ splits off as a direct summand:
    \begin{equation*}
      \bA = \bA_f\oplus \bB,\quad \bB\text{ of bounded order}.
    \end{equation*}
    Quotient out $\bB$ and some cyclic summands of (the finitely-generated) $\bA_f$ to finally bring $\bG$ to the form
    \begin{equation*}
      \{1\}\to \bZ\to \bG \to \bT^d\to \{1\}.
    \end{equation*}
    The identification
    \begin{equation*}
      H^2(\bT,\bZ)\cong \mathrm{Hom}(\pi_1(\bT^d),\bZ)\cong \mathrm{Hom}(\bZ^d,\bZ)
    \end{equation*}
    mentioned in the proof of \cite[II, Proposition 1.2]{moore-ext} makes it clear that $\bG$ either splits as $\bZ\oplus \bT^d$ or contains $\bR\oplus \bT^{d-1}$ as a cofinite subgroup. Either way, it fails to be linearly type-I-preserving:
    \begin{itemize}
    \item $\bZ$ isn't, as in \Cref{item:aafa}, because it is discrete and has an infinite-order character,
    \item and $\bR$ isn't by {\it Mautner's example} \cite[pp.137-138]{am}, of a non-type-I group of the form $\bC^2\rtimes \bR$ for an appropriately-chosen linear (unitary, in fact) representation of $\bR$ on $\bC^2$.
    \end{itemize}    
  \end{enumerate}

  This concludes the proof of the result as a whole.  
\end{proof}

% % \begin{remark}
% %   Note a slight subtlety in part \Cref{item:haschar} of \Cref{th:disc-by-cpct}: when the hypothesis is satisfied the discrete group $\bA$ admits an action on $\bC$ via the offending infinite-order character, but a representation of the larger group $\bG$ witnessing the type-I failure might well have to have the subgroup $\bA$ act with several distinct characters.
% %   
% %   It is not difficult to see, for instance, that any linear representation $\bR\to GL(V)$ in which $\bZ\subset \bR$ acts by scalars will give rise to a type-I group $V\rtimes \bR$.
% % \end{remark}
% % 

As a consequence, when the compact quotient $\bK$ in \Cref{eq:dgk} is absent we have a characterizations for (linear) type-I preservation for discrete groups. Recall that a group is {\it of bounded order} if there is some $n$ such that all elements are of order $\le n$ (\cite[\S 8]{kap}, for instance).

\begin{corollary}\label{cor:disc}
    A discrete countable group $\bG$ is
  \begin{enumerate}[(1)]
  \item\label{item:6} type-I-preserving if and only if it is finite;
  \item\label{item:7} linearly type-I-preserving if and only if it has a finite-index abelian subgroup $\bA\le \bG$ satisfying any of the following equivalent conditions.
    \begin{enumerate}[(a)]
    \item\label{item:15} $\bA$ is of bounded order.
    \item\label{item:16} $\bA$ is a direct sum
      \begin{equation*}
        \bA\cong \bigoplus_{n_i}\bZ/n_i,\quad (n_i)_i\text{ bounded}. 
      \end{equation*}
    \item\label{item:17} The compact Pontryagin dual $\widehat{\bA}$ is a product
      \begin{equation*}
        \widehat{\bA}\cong \prod_{n_i}\bZ/n_i,\quad (n_i)_i\text{ bounded}. 
      \end{equation*}
    \item\label{item:18} The dual $\widehat{\bA}$ is of bounded order.
    \item\label{item:19} The dual $\widehat{\bA}$ is torsion. 
    \end{enumerate}    
  \end{enumerate} 
\end{corollary}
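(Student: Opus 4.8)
The plan is to derive everything from \Cref{th:disc-by-cpct} specialized to a trivial compact quotient $\bK=\{1\}$ (so that $\bG=\bD$ is itself discrete and, in particular, $\bK$ is trivially Lie), supplemented by a self-contained Pontryagin-duality lemma establishing the equivalence of the five conditions (a)--(e). Part \Cref{item:6} falls out immediately: if $\bG$ is finite it is compact, hence type-I-preserving by \Cref{le:t1impl}; conversely the converse half of \Cref{th:disc-by-cpct}\Cref{item:12} applies (its hypothesis $\bK$ Lie being vacuously met) and forces a type-I-preserving $\bG$ to be compact, i.e. finite since it is discrete.

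Next I would dispatch the equivalences among conditions (a)--(e), which are purely statements about LCA groups. The implication (a)$\Rightarrow$(b) is Prüfer's theorem that an abelian group of bounded exponent is a direct sum of cyclic groups, with orders bounded by the exponent; (b)$\Leftrightarrow$(c) is Pontryagin duality, since the dual of a direct sum is the corresponding direct product and $\bZ/n$ is self-dual; and (a)$\Leftrightarrow$(d) follows from the fact that exponent is preserved under duality, i.e. $n\bA=0\iff n\widehat{\bA}=0$. The one step with genuine content is the passage from (e) to (d): here $\widehat{\bA}$ is a \emph{compact} torsion group, so $\widehat{\bA}=\bigcup_{n\ge 1}\{x:x^n=1\}$ is a countable union of closed subgroups, and the Baire category theorem (valid since $\widehat{\bA}$ is compact Hausdorff) yields one of them with nonempty interior, hence open, hence of finite index $m$; then $x^{m}\in\{x:x^n=1\}$ for every $x$, so $\widehat{\bA}$ has exponent dividing $mn$ and is of bounded order. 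The trivial implication (d)$\Rightarrow$(e) closes the cycle.

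For the main equivalence in \Cref{item:7} I would argue both directions through \Cref{th:disc-by-cpct}. For necessity, a linearly type-I-preserving $\bG$ is type-I by \Cref{re:ist1}, hence, being discrete, virtually abelian by Thoma's theorem; replacing the finite-index abelian subgroup by a characteristic one (\cite[Lemma 21.1.4]{km}) I may take $\bA\trianglelefteq\bG$ to be normal, abelian, discrete, and cocompact. Since $\bA$ is abelian of finite index, its centralizer contains $\bA$ and is therefore cofinite, so conjugation acts on $\widehat{\bA}$ through a finite group and \emph{every} orbit is finite; were $\widehat{\bA}$ to contain an infinite-order character it would be one with finite orbit, and \Cref{th:disc-by-cpct}\Cref{item:haschar} would contradict linear type-I preservation. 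Thus $\widehat{\bA}$ is torsion, which is condition (e). For sufficiency, given a finite-index abelian $\bA$ of bounded order I again pass to a characteristic such subgroup to make it normal and cocompact; by the equivalences $\widehat{\bA}$ is torsion and so has no infinite-order elements, whence the infinite-orbit hypothesis of \Cref{th:disc-by-cpct}\Cref{item:13} holds vacuously and delivers that $\bG$ is linearly type-I-preserving.

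The only non-formal ingredient in the whole argument is the Baire-category step proving (e)$\Rightarrow$(d); everything else is either a direct invocation of \Cref{th:disc-by-cpct} or routine duality bookkeeping. The subtle point to handle carefully is the reduction to a \emph{normal} abelian subgroup in both directions, so that the cocompact-normal hypotheses of \Cref{th:disc-by-cpct}\Cref{item:13} and \Cref{item:haschar} genuinely apply; the characteristic-subgroup trick resolves this uniformly, and the observation that a finite-index abelian subgroup has only finite conjugation orbits on its dual is what makes the forward direction collapse so cleanly onto condition (e).
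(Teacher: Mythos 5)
Your proposal is correct and follows essentially the same route as the paper: both parts are obtained by specializing \Cref{th:disc-by-cpct} to $\bK=\{1\}$, and the equivalence of conditions \Cref{item:15}--\Cref{item:19} is handled by the same mix of Pr\"ufer/Kaplansky and Pontryagin duality. The only substantive differences are that you prove the step \Cref{item:19}$\Rightarrow$\Cref{item:18} (compact abelian torsion groups have bounded order) from scratch via a correct Baire-category argument where the paper simply cites \cite[p.70]{mor-pont}, and that you are somewhat more explicit than the paper in routing the necessity half of part \Cref{item:7} through \Cref{th:disc-by-cpct} \Cref{item:haschar} together with the (correct and useful) observation that conjugation orbits on $\widehat{\bA}$ are automatically finite when $\bA$ is abelian of finite index.
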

\begin{proof}
  The two claims are what parts \Cref{item:12} and respectively \Cref{item:13} of \Cref{th:disc-by-cpct} specialize to upon setting $\bK=\{1\}$, modulo the mutual equivalence of the various conditions in \Cref{item:7}: \Cref{th:disc-by-cpct} \Cref{item:13} requires that the compact group $\widehat{\bA}$ be torsion, i.e. condition \Cref{item:19} of the present statement. We briefly sketch why the others amount to the same constraint (a well-known result).

  Note that
  \begin{itemize}
  \item \Cref{item:15} and \Cref{item:18} are mutual Pontryagin duals and hence equivalent;
  \item similarly for \Cref{item:17} and \Cref{item:16};
  \item the latter pair clearly implies the former;
  \item whereas the converse, in the form \Cref{item:15} $\Rightarrow$ \Cref{item:16}, say, is \cite[Theorem 6]{kap};
  \item so that the first four conditions are mutually equivalent, and they clearly imply the fifth. 
  \end{itemize}
  It thus remains to recall that compact abelian torsion groups are automatically of bounded order: e.g. \cite[p.70]{mor-pont}.
\end{proof}

An analogue to \Cref{cor:disc} obtains upon substituting abelianness for discreteness. We will make repeated use of the main structure theorem for locally compact abelian (LCA, for short) groups, \cite[Theorem 4.2.1]{de}: an LCA group $\bG$ decomposes as
\begin{equation}\label{eq:krd}
  \bG\cong \bG_1\times \bR^d
\end{equation}
with $\bG_1$ admitting a compact open subgroup.

The number $d$ is an invariant attached canonically to $\bG$ that we will refer to as the {\it characteristic index} (following \cite[Definition preceding Lemma 3.15]{iw}, which introduces an overlapping notion).

\begin{definition}\label{def:vfr}
  A locally compact abelian group $\bG$ with an open subgroup \Cref{eq:krd} is {\it vector-group-free} or {\it vector-free} or {\it vector-less} if its characteristic index $d$ vanishes. 
\end{definition}

\begin{proposition}\label{pr:lca}
  A second-countable locally compact abelian group $\bG$ is
  \begin{enumerate}[(1)]
  \item\label{item:10} type-I-preserving if and only if it is compact;
  \item\label{item:11} linearly type-I-preserving if and only if it has an open compact subgroup $\bK\le \bG$ with bounded-order quotient $\bG/\bK$.
  \end{enumerate}
\end{proposition}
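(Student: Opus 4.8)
The plan is to reduce everything to the already-settled discrete case \Cref{cor:disc} together with the structure decomposition \Cref{eq:krd}, $\bG\cong\bG_1\times\bR^d$, isolating the one genuinely analytic point — the $\Leftarrow$ direction of \Cref{item:11} — as a compactness statement for the image of the representation. The forward direction of \Cref{item:10} ($\bG$ compact $\Rightarrow$ type-I-preserving) is immediate from \Cref{le:t1impl}. For the reverse directions I would first observe that a (linearly) type-I-preserving LCA group $\bG$ cannot surject onto $\bR$: otherwise \Cref{le:quot} would make $\bR$ linearly type-I-preserving, contradicting Mautner's example recalled in the proof of \Cref{th:disc-by-cpct}. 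Hence the vector part in \Cref{eq:krd} is trivial, $d=0$, and $\bG=\bG_1$ carries a compact open subgroup $\bK$, so that $\bG/\bK$ is a countable discrete abelian group which is (linearly) type-I-preserving by \Cref{le:quot}.

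Granting this, the two ``$\Rightarrow$'' halves follow quickly. For \Cref{item:10} I invoke \Cref{cor:disc}\Cref{item:6} to conclude that the discrete quotient $\bG/\bK$ is finite, whence $\bG$ is compact. For \Cref{item:11}, \Cref{cor:disc}\Cref{item:7} furnishes a finite-index subgroup $\bA\le\bG/\bK$ of bounded order; a one-line computation (if $\bA$ has exponent $e$ and index $f$ then $(ef)q=0$ for every $q\in\bG/\bK$) upgrades this to $\bG/\bK$ itself being of bounded order, which is exactly the stated condition.

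The substantive direction is $\Leftarrow$ of \Cref{item:11}: given a compact open $\bK\le\bG$ with $\bG/\bK$ of exponent $M$ and a finite-dimensional $\pi:\bG\to GL(V)$, I must show $V\rtimes\bG$ is type-I. I would prove that $\pi(\bG)$ is in fact \emph{compact}, after which the Mackey machine finishes exactly as in the proof of \Cref{th:disc-by-cpct}\Cref{item:13}: $\bG$ acts on $\widehat V$ through a compact group, so every orbit is compact, hence closed, the regular-embedding condition holds (via \cite[Theorem 1]{glm} and \cite[p.186, Definition]{mack-unit}), the stabilizers are closed subgroups of the abelian group $\bG$ and therefore type-I, and \cite[Theorem 3.12]{mack-unit} delivers the conclusion. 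To see compactness (writing $\bG$ additively): $\pi(\bK)$ is compact abelian and $\pi(g)^M=\pi(Mg)\in\pi(\bK)$ for every $g$, so each $\pi(g)$ is semisimple with unimodular eigenvalues; being a commuting family, the $\pi(g)$ are simultaneously diagonalizable over $\bC$, so $\overline{\pi(\bG)}$ is a compact abelian Lie group. Finally $\pi(\bG)/\pi(\bK)$ is a subgroup of exponent dividing $M$ of the compact abelian Lie group $\overline{\pi(\bG)}/\pi(\bK)$, and torsion subgroups of such groups are finite; hence $\pi(\bG)$ is a finite union of translates of $\pi(\bK)$, and is compact.

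I expect the main obstacle to be precisely this last step, and in particular the temptation to stop at mere pre-compactness of $\pi(\bG)$. Pre-compactness is \emph{not} enough: an irrational winding $\bZ\to\bT^2$ has dense (pre-compact) image yet produces non-locally-closed orbits and a non-type-I semidirect product — consistent with $\bZ$ violating the hypothesis. It is the bounded-order assumption on $\bG/\bK$ that rescues the argument, forcing $\pi(\bG)/\pi(\bK)$ to be genuinely finite and hence $\pi(\bG)$ to be closed rather than merely dense in its compact closure. Getting this quantitative point right, rather than the softer structural reductions, is where the care is needed.
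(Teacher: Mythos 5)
Your proposal is correct and follows essentially the same route as the paper: kill the vector part via Mautner's example and \Cref{le:quot}, reduce to the discrete quotient $\bG/\bK$ via \Cref{cor:disc}, and for the converse of \Cref{item:11} show that $\pi(\bG)$ is a finite extension of the compact group $\pi(\bK)$ (hence compact), so that the orbits on $\widehat{V}$ are closed and Glimm/Mackey applies. The only caveat is the phrase ``torsion subgroups of such groups are finite,'' which is literally false (e.g.\ $\bQ/\bZ\subset\bS^1$); what you need, and what your argument actually uses, is that the set of elements of order dividing $M$ in a compact abelian Lie group is finite.
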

\begin{proof}
  A number of partial results will coalesce into the desired result. Throughout, we consider a decomposition \Cref{eq:krd}.
  
  {\bf (Linearly) type-I-preserving implies vector-less.} \Cref{le:quot} implies that the quotient $\bG\to \bR^d$ must itself be (linearly) type-I-preserving, along with its quotient $\bR^d\to \bR$ if $d\ge 1$. This, though, is not the case:
  \begin{itemize}
  \item represent $\bR$ on some 2-dimensional complex vector space $V$ via 
    \begin{equation*}
      \bR\ni t\mapsto \mathrm{diag}\left(e^{2\pi i t}, e^{2\pi i \lambda t}\right) \in U(V)
    \end{equation*}
    for irrational $\lambda$, as in the {\it Mautner example} denoted by $M_5$ on \cite[pp.137-138]{am} (also \cite[\S 6.8, 1.]{folland});
  \item and conclude as in loc.cit., from \cite[p.110, Corollary to Theorem 9]{am}, that the semidirect product $V\rtimes \bR$ cannot be of type I.
  \end{itemize}

  {\bf Claim \Cref{item:10}.} That compactness implies type-I-preservation follows from \Cref{le:t1impl}, so we are interested in the converse. Vector-less-ness implies that $\bG$ has an open compact subgroup; the quotient is discrete and type-I-preserving, hence finite by \Cref{cor:disc} \Cref{item:6}, and we are done.

  {\bf Claim \Cref{item:11}: ($\Rightarrow$).} Vector-less-ness again implies the existence of a compact open subgroup $\bK$, and the discrete quotient $\bG/\bK$ must be of bounded order by \Cref{le:quot} and \Cref{cor:disc} \Cref{item:7}.

  {\bf Claim \Cref{item:11}: ($\Leftarrow$).} Consider a finite-dimensional $\bG$-representation
  \begin{equation*}
    \pi:\bG\to GL(V). 
  \end{equation*}
  The type-I character of $V\rtimes \bG$ will follow from \cite[Theorem 3.12]{mack-unit} via \cite[Theorem 1]{glm} once we argue that all orbits in finite-dimensional $\bG$-representations are locally closed.
  
  To that end, note that in fact all such orbits are {\it compact} (and hence closed). Indeed, let $\bK\le \bG$ be a compact open subgroup with bounded-order attached quotient $\bM:=\bG/\bK$. The closed images
  \begin{equation*}
    \pi(\bK)\subset GL(V)\text{ (compact hence automatically closed) }\text{ and }\overline{\pi(\bG)}\subset GL(V)
  \end{equation*}
  are both closed abelian Lie subgroups with finitely many components. It follows that for every $n$ the group of elements of order $\le n$ in the Lie group $\overline{\pi(\bG)}/\pi(\bK)$ is finite, and hence the image of the bounded-order group $\bM$ in that quotient is finite.

  All in all, this implies that the image $\pi(\bG)$ ({\it without} taking the closure) is a finite extension of the compact Lie group $\pi(\bK)$ and hence again compact Lie. In particular, its orbits in $V$ (or $V^*$) are compact. 
\end{proof}

\Cref{pr:lca} \Cref{item:10} goes through under the weaker hypothesis of nilpotence (as opposed to abelianness). We first need the following auxiliary observation. 

\begin{proposition}\label{pr:nosurj}
  A locally compact nilpotent group with no quotients that are either infinite discrete abelian or vector-groups is compact.
\end{proposition}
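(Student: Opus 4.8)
The plan is to first reduce to the case of compact abelianization and then propagate compactness up the lower central series, the key device being the commutator pairing on the graded pieces.

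\textbf{Reduction to compact abelianization.} Since $\bG$ is nilpotent, its abelianization $\bG^{\mathrm{ab}}=\bG/\overline{[\bG,\bG]}$ is a locally compact abelian group and a quotient of $\bG$; as every quotient of $\bG^{\mathrm{ab}}$ is then a quotient of $\bG$, the hypothesis passes to $\bG^{\mathrm{ab}}$. Writing $\bG^{\mathrm{ab}}\cong \bG_1\times\bR^d$ as in \Cref{eq:krd}, the factor $\bR^d$ would furnish a vector-group quotient, so $d=0$; then $\bG_1=\bG^{\mathrm{ab}}$ has a compact open subgroup $\bK$, and the discrete abelian quotient $\bG^{\mathrm{ab}}/\bK$, being a quotient of $\bG$, cannot be infinite, hence is finite and $\bG^{\mathrm{ab}}$ is compact. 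It thus suffices to prove the purely structural statement that a locally compact nilpotent group with compact abelianization is compact.

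\textbf{Set-up via the topological lower central series.} I would work with $\Gamma_1:=\bG$ and $\Gamma_{i+1}:=\overline{[\bG,\Gamma_i]}$; these are closed normal subgroups with $\Gamma_i=\overline{\gamma_i}$ for the abstract lower central terms $\gamma_i$, so the series terminates, $\Gamma_{c+1}=\{1\}$, because $\bG$ is abstractly nilpotent. Each $\Gamma_i/\Gamma_{i+1}$ is a locally compact abelian group, and by the usual commutator calculus (using $[\Gamma_2,\Gamma_i]\subseteq\Gamma_{i+2}$ and $[\bG,\Gamma_{i+1}]\subseteq\Gamma_{i+2}$) the commutator map descends to a continuous bi-additive pairing
\[
P_i:\bG^{\mathrm{ab}}\times \Gamma_i/\Gamma_{i+1}\longrightarrow \Gamma_{i+1}/\Gamma_{i+2}
\]
whose image generates a dense subgroup of the target.

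\textbf{Induction.} I claim each $\Gamma_i/\Gamma_{i+1}$ is compact, by induction on $i$, the base case $i=1$ being $\bG^{\mathrm{ab}}$. Assuming $\Gamma_{i-1}/\Gamma_i$ compact, the domain of $P_{i-1}$ is a product of compact groups, so its image is compact and generates a dense subgroup of $Q:=\Gamma_i/\Gamma_{i+1}$; together with a compact identity neighbourhood this exhibits $Q$ as compactly generated, hence $Q\cong\bR^a\times\bZ^b\times(\text{compact})$. Now any continuous homomorphism $\psi:Q\to\bR$, composed with $P_{i-1}$, is a continuous bi-additive map out of a product of compact groups; fixing either variable yields a continuous homomorphism from a compact group into $\bR$, necessarily trivial, so $\psi\circ P_{i-1}=0$, and density of the image forces $\psi=0$. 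A compactly generated locally compact abelian group admitting no nonzero homomorphism to $\bR$ must have $a=b=0$ (both $\bR^a$ and $\bZ^b$ map nontrivially to $\bR$ when nonzero), so $Q$ is compact. Finally the chain $\bG=\Gamma_1\supseteq\cdots\supseteq\Gamma_{c+1}=\{1\}$ presents $\bG$ as a finite iterated extension with compact quotients, and an extension of a compact group by a compact group is compact, so $\bG$ is compact.

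\textbf{Main obstacle.} The delicate point is that the hypothesis constrains the \emph{quotients} of $\bG$, whereas the commutator terms $\Gamma_{i+1}$ are \emph{subgroups}, to which the assumption cannot be applied directly; the pairing $P_i$ is precisely what circumvents this, forcing each graded piece to be compactly generated and to carry no $\bR$- or $\bZ$-direction simply because it is a continuous bilinear image of compact groups. The remaining ingredients — the identification $\Gamma_i=\overline{\gamma_i}$ and termination, the structure theorem for compactly generated LCA groups, and the stability of compactness under extensions — are standard.
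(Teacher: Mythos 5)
Your proof is correct, and it takes a genuinely different route from the paper's. The paper also inducts on the length of the closed lower central series, but in the opposite direction: it isolates the last nontrivial term $\bM$ as a central subgroup with $\bG/\bM$ compact (by the inductive hypothesis), decomposes $\bM\cong\bM_1\times\bR^d$, and invokes the vanishing of Moore's cohomology group $H^2(\bG/\bM,\bR^d)$ to split the vector part off as a global direct factor --- whence a quotient $\bG\twoheadrightarrow\bR^d$ and $d=0$ by hypothesis --- before quotienting by a compact open subgroup of $\bM_1$ and running a commutator-bilinearity argument into the now-discrete kernel. Your argument dispenses with the cohomology entirely: you apply the hypothesis only once, to the abelianization, and then the pairings $P_i$ force each graded piece $\Gamma_i/\Gamma_{i+1}$ to be a compactly generated LCA group admitting no nonzero continuous homomorphism to $\bR$, hence compact, purely because it is (densely) a continuous bi-additive image of a product of compact groups. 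Both proofs ultimately rest on continuity and bi-additivity of the commutator on the associated graded object, but yours is more self-contained (the structure theorem for compactly generated LCA groups in place of $H^2(\bK,\bR^d)=0$) and cleanly separates the single use of the hypothesis on quotients from the purely structural propagation of compactness. The details you flag as standard --- the identification $\Gamma_i=\overline{\gamma_i}$ and termination, well-definedness and continuity of $P_i$ via $[\Gamma_2,\Gamma_i]\subseteq\Gamma_{i+2}$ and $[\bG,\Gamma_{i+1}]\subseteq\Gamma_{i+2}$, compact generation from a compact generating set of a dense subgroup plus a compact identity neighbourhood, and stability of compactness under extensions --- all check out.
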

\begin{proof}
The (closed) lower central series \cite[Definition following Lemma 5.30]{rot-gp}
  \begin{equation}\label{eq:lser}
    \{1\}\subset \cdots \subset \overline{[\bG,[\bG,\bG]]} \subset\overline{[\bG,\bG]}\subset \bG
  \end{equation}
  of our nilpotent group $\bG$ is finite. We proceed by induction on its length.

  The base case of abelian $\bG$ follows from the product decomposition \Cref{eq:krd}, so suppose the claim holds for groups with shorter central series. To tackle the induction step, it will be enough to restrict attention to the following simplified setup:
  \begin{itemize}
  \item $\bG$ has a central subgroup $\bM$;
  \item with $\bK:=\bG/\bM$ compact (i.e. $\bM\le \bG$ is cocompact).
  \end{itemize}
  Being abelian, $\bM$ decomposes as
  \begin{equation*}
    \bM\cong \bM_1\times \bR^d
  \end{equation*}
  with $\bM_1$ having a compact open subgroup $\bL$ (by \cite[Theorem 4.2.1]{de}, as in \Cref{eq:krd}), and the central extension
  \begin{equation}\label{eq:mgk}
    \{1\}\to \bM\to \bG\to \bK\to \{1\}
  \end{equation}
  corresponds to a cohomology class in
  \begin{equation*}
    H^2(\bK,\bM)\cong H^2(\bK,\bM_1\times \bR^d)\cong H^2(\bK,\bM_1)\oplus H^2(\bK,\bR^d).
  \end{equation*}
  The cohomology group $H^2(\bK,\bR^d)$ vanishes (for any compact $\bK$; \cite[Theorem 2.3]{moore-ext}), so the $\bR^d$ summand of $\bM$ splits off globally. In particular $\bG$ surjects onto $\bR^d$, so $d=0$. By further quotienting out the compact open subgroup $\bL\le \bM$ (which is central in $\bG$), we can assume $\bM$ itself is discrete.

  Now, let $\bH\le \bG$ be the closed subgroup such that $\bH/\bM\le \bG/\bM$ is the center. The commutator map
  \begin{equation*}
    \bG\times \bH\ni (g,h)\mapsto [g,h]:=ghg^{-1}h^{-1}\in \bM
  \end{equation*}
  is continuous, bilinear (because $\bH$ is central modulo $\bM$, which is central period), and factors through the compact space $\bG/\bM\times \bH/\bM$. It follows that that map takes values in a finite subgroup of $\bM$; upon quotienting it out, we can assume that $\bH$ itself is central in $\bG$.
  
  But now we have shortened the central series, and can conclude by induction.
\end{proof}

This now easily implies the announced

\begin{proposition}\label{pr:nilp}
  A second-countable locally compact nilpotent group $\bG$ is type-I-preserving if and only if it is compact.
\end{proposition}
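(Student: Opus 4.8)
The plan is to dispose of the easy direction first and then reduce the converse to structural results already in hand, namely \Cref{pr:nosurj} together with the discrete and abelian classifications. That a compact $\bG$ is type-I-preserving is precisely the first implication of \Cref{le:t1impl} (which ultimately rests on \Cref{cor:ext}), so no further work is required in that direction.

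For the converse I would argue by contraposition: assume $\bG$ is nilpotent but \emph{not} compact, and exhibit a quotient witnessing the failure of type-I preservation. By \Cref{pr:nosurj}, a non-compact locally compact nilpotent group must admit a quotient $Q$ that is either infinite discrete abelian or a (nontrivial) vector group $\bR^d$ with $d\ge 1$. Either alternative obstructs type-I preservation once we invoke \Cref{le:quot}, which guarantees that $Q$ inherits the type-I-preserving property from $\bG$.

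In the first case $Q$ is a second-countable (hence countable) discrete abelian group that is type-I-preserving, so \Cref{cor:disc} \Cref{item:6} forces it to be finite, contradicting that it is infinite. In the second case $Q\cong \bR^d$ is a non-compact locally compact abelian group, which by \Cref{pr:lca} \Cref{item:10} cannot be type-I-preserving, again a contradiction. Hence no such quotient exists and $\bG$ must be compact.

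The only real content lies in trusting that the dichotomy of \Cref{pr:nosurj} genuinely exhausts the non-compact case and that both horns are closed off by the earlier classifications; since those classifications are tuned precisely to discrete-abelian and LCA targets, the argument is an assembly of existing inputs rather than a new computation. The main (and very mild) point to keep track of is that second-countability descends to the relevant quotients so that \Cref{cor:disc} applies — which is automatic, a quotient of a second-countable group being second-countable, and a discrete second-countable group being countable.
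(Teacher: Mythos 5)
Your proposal is correct and follows essentially the same route as the paper: both deduce the converse from \Cref{pr:nosurj} together with \Cref{le:quot}, ruling out infinite discrete abelian and vector-group quotients via the earlier classifications (the paper cites \Cref{pr:lca}~\Cref{item:10} for both horns, while you use \Cref{cor:disc}~\Cref{item:6} for the discrete case — an immaterial difference). No gaps.
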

\begin{proof}

  Immediate from \Cref{pr:nosurj}, given that type-I preservation passes to quotients and neither vector groups nor infinite discrete nilpotent groups have it (\Cref{pr:lca} \Cref{item:10}).
\end{proof}

%%%%%%%%%%%%%%%%%%%%%%%%%%%%%%%%%%%%%%%%%%%%%%%%%%%%%%%%%%%%%%%%%%%%%%%%%%%%%
\subsection{Some totally disconnected examples}\label{subse:totdisc}

We gather some examples meant to address various questions that the preceding results might raise naturally.

First, prompted by \Cref{cor:disc} \Cref{item:7}, one might wonder whether the discrete group $\bD$ in \Cref{eq:dgk} must be of bounded order. The answer is negative.

\begin{example}\label{ex:notbdd}
  Let
  \begin{equation*}
    \bD:=\bigoplus_{\text{primes }p}\bZ/p,
  \end{equation*}
  take for $\bK$ its (compact) automorphism group
  \begin{equation}\label{eq:autgp}
    \bK=\mathrm{Aut}(\bD)\cong \prod_{p}\mathrm{Aut}(\bZ/p)\cong \prod_p \bZ/(p-1),
  \end{equation}
  and assemble the extension \Cref{eq:dgk} out of the semidirect product $\bG:=\bD\rtimes \bK$.

  Evidently, $\bD$ is not of bounded order. On the other hand, it must be linearly type-I-preserving by \Cref{th:disc-by-cpct} \Cref{item:13}: the infinite-order elements of the compact dual
  \begin{equation*}
    \widehat{\bD}\cong \prod_p \bZ/p
  \end{equation*}
  are precisely the elements with non-zero components in infinitely many $\bZ/p$ factors, and such elements have infinite orbits under \Cref{eq:autgp}. 
\end{example}

\begin{remark}\label{re:notnilp}
  Although obviously solvable, the group $\bG$ of \Cref{ex:notbdd} is not nilpotent, and cannot be: in \Cref{th:disc-by-cpct} \Cref{item:13}, under the additional assumption that $\bG$ is nilpotent, we can filter the discrete, normal, abelian subgroup $\bA\trianglelefteq \bG$ by
  \begin{equation*}
    \{1\}=\bA_{-1}\le \bA_0\le \cdots\le \bA_n=\bA
  \end{equation*}
  so that each $\bA_{n+1}/\bA_n$ is central in $\bG/\bA_n$. That centrality then ensures that $\bK$ acts trivially on $\bA_{n+1}/\bA_n$, and $\bA$ can be shown to have bounded order by induction on $n$.
\end{remark}

\begin{remark}
  Incidentally, \Cref{ex:notbdd} also shows that linear type-I preservation does not descend along normal, cocompact embeddings: we argued in the example that $\bG$ is linearly type-I-preserving, whereas the normal, cocompact, discrete abelian group $\bD\trianglelefteq \bG$ cannot be (by \Cref{pr:lca} \Cref{item:11}).

  The cofiniteness assumption of \Cref{le:changegp} \Cref{item:9} is thus essential.
\end{remark}

Next, regarding \Cref{pr:lca} \Cref{item:11}, note that the compact and discrete pieces may well both be present.

\begin{example}\label{ex:cpctbot}
  Consider the abelian extension
  \begin{equation*}
    \{1\}\to (\bZ/n)^{\aleph_0}\to \bG\to (\bZ/n)^{\oplus\aleph_0}\to \{1\}
  \end{equation*}
  (so that the quotient is a discrete direct {\it sum} and the kernel is a compact direct {\it product}) associated via \cite[Theorem 7.8]{vrd} to the image through
  \begin{equation*}
    H^2\left((\bZ/n)^{\oplus\aleph_0},(\bZ/n)^{\oplus\aleph_0}\right)\to H^2\left((\bZ/n)^{\oplus\aleph_0},(\bZ/n)^{\aleph_0}\right)
  \end{equation*}
  of the 2-cocycle giving the obvious extension
  \begin{equation*}
    \{1\}\to (\bZ/n)^{\oplus\aleph_0}\to (\bZ/n^2)^{\oplus\aleph_0}\to (\bZ/n)^{\oplus\aleph_0}\to \{1\}.
  \end{equation*}
  $\bG$ is linearly type-I-preserving by \Cref{pr:lca} \Cref{item:11}, but this time the compact group is a kernel and cannot be a quotient, as in \Cref{ex:infheis}.
\end{example}

On the other hand, {\it nilpotent} examples covered by \Cref{th:disc-by-cpct} \Cref{item:13} show that such a group need not have compact open normal subgroups.

\begin{example}\label{ex:infheis}
  Consider
  \begin{itemize}
  \item the compact group
    \begin{equation*}
      \bK:=(\bZ/n)^{\aleph_0};
    \end{equation*}    
  \item  the discrete groups
    \begin{equation*}
      \bA = \bD := (\bZ/n)^{\oplus \aleph_0};
    \end{equation*}
  \item the (continuous) bilinear morphism
    \begin{equation*}
      \varphi:\bA\times \bK\to \bD
    \end{equation*}
    pairing the individual $\aleph_0$-indexed summands via the ring multiplication
    \begin{equation*}
      \bZ/n\times \bZ/n\to \bZ/n.
    \end{equation*}
  \end{itemize}
  This data gives rise to a nilpotent central extension
  \begin{equation*}
    \{1\}\to \bD\to \bG\to \bA\times \bK\to \{1\}
  \end{equation*}
  whereby the commutator morphism
  \begin{equation*}
    \bG^{\times 2}\ni (g_1,g_2)\mapsto [g_1,g_2] := g_1g_2g_1^{-1}g_2^{-1}
  \end{equation*}
  descends to $\varphi$. More formally, this is the central extension associated via \cite[Theorem 7.8]{vrd} to the unique 2-cocycle
  \begin{equation*}
    m:(\bA\times \bK)^{\times 2}\to \bD
  \end{equation*}
  that restricts to $\varphi$ on $\bA\times \bK$ and is trivial on
  \begin{equation*}
    \bA^2,\quad \bK^2\quad\text{and}\quad \bK\times \bA. 
  \end{equation*}
  The group $\bG$ alternatively fits into (non-central) extensions
  \begin{equation}\label{eq:dabk}
    \{1\}\to \bD\times \bA\to \bG\to \bK\to \{1\}
  \end{equation}
  and
  \begin{equation*}
    \{1\}\to \bD\times \bK\to \bG\to \bA\to \{1\}.
  \end{equation*}
  Cast as \Cref{eq:dabk} it is proven linearly type-I-preserving by \Cref{th:disc-by-cpct} \Cref{item:13}, but $\bG$ is easily seen not to have a compact, open, normal subgroup $\bK_1\trianglelefteq \bG$: $\bG/\bK_1$ being open the intersection $\bK\cap \bK_1$ would have to be of finite index in $\bK$, but the construction makes it clear that operating on the finite-index subgroup
  \begin{equation*}
    \bK\cap \bK_1\le \bK
  \end{equation*}
  by conjugation with $\bA$ will produce elements ranging over a finite-index subgroup of the discrete center $\bD$, and hence force us out of the supposedly-normal subgroup $\bK_1\trianglelefteq \bG$.
\end{example}

\begin{remark}\label{re:cpctopenorm}
  The issue of whether a totally-disconnected locally compact group $\bG$ has compact, open, {\it normal} subgroups has received some attention in the literature: there are always, in a sense, ``enough'' such subgroups when $\bG$ is {\it compactly generated} \cite[Theorem]{wil-nilp}. The example following that result shows that compact generation is necessary, as does \Cref{ex:infheis} above.
\end{remark}

\Cref{ex:cpctbot,ex:infheis} neatly split off a compact from a discrete subquotient, fitting the group into one of the two patterns
\begin{itemize}
\item compact-by-discrete;
\item or discrete-by-compact. 
\end{itemize}

The examples can be combined to produce a nilpotent linearly type-I-preserving group that is of neither type: there is a discrete subquotient ``trapped'' between two a normal compact subgroup and a compact quotient.

\begin{example}\label{ex:sandwich}
  This time we denote by $\bD$ the abelian group $\bG$ of \Cref{ex:cpctbot}, so that we have an extension
  \begin{equation}\label{eq:zdz}
    \{1\}\to (\bZ/n)^{\aleph_0}\to \bD\to (\bZ/n)^{\oplus\aleph_0}\to \{1\}.
  \end{equation}
  Next, set
  \begin{equation*}
    \bK:=(\bZ/n^2)^{\aleph_0},\quad \bA:=(\bZ/n^2)^{\oplus\aleph_0}.
  \end{equation*}
  Finally, consider a pairing
  \begin{equation*}
    \varphi:\bA\times \bK\to (\bZ/n^2)^{\bigoplus \aleph_0}\subset \bD
  \end{equation*}
  where the left-hand arrow is as in \Cref{ex:infheis} and the right-hand map is the obvious inclusion. This data is again sufficient to recover a central extension
  \begin{equation*}
    \{1\}\to \bD\to \bG\to \bA\times \bK\to \{1\},
  \end{equation*}
  linearly type-I-preserving:
  \begin{itemize}
  \item the cocompact, abelian subgroup $\bullet\le \bG$ in
    \begin{equation*}
      \{1\}\to \bD\to \bullet\to \bA\to \{1\}
    \end{equation*}
    is linearly type-I-preserving by \Cref{pr:lca};
  \item hence so is $\bG$, by \Cref{le:changegp} \Cref{item:8}.
  \end{itemize}
  That $\bG$ has no normal compact open subgroups can be argued as in \Cref{ex:infheis}. On the other hand, we now no longer have closed, discrete, normal, cocompact groups either. In fact, cocompactness does not play much of a role: every closed, discrete, normal subgroup $\bE\trianglelefteq \bG$ is finite, as we now argue.

  The extension \Cref{eq:zdz} is easily seen to be {\it essential} \cite[Derfinition preceding Proposition 3.43]{rot}, in the sense that every subgroup of $\bD$ intersects the left-hand term. Because that term is compact, $\bE$ intersects it and hence $\bD$ along a finite group:
  \begin{equation}\label{eq:defin}
    |\bE\cap\bD|<\infty
  \end{equation}
  Next, if $\bE$ were infinite, it would have infinite image in $\bA\times \bK$ and hence in one of $\bA$ or $\bK$. If the former, operating on that infinite image by conjugation with $\bK$ would produce infinitely many elements of $\bD$, contradicting \Cref{eq:defin}. If the latter, act by conjugation with $\bA$ instead.

  Either way, we conclude that $\bE$ must be finite.
\end{example}

%%%%%%%%%%%%%%%%%%%%%%%%%%%%%%%%%%%%%%%%%%%%%%%%%%%%%%%%%%%%%%%%%%%%%%%%%%%%%
\subsection{Connected groups}

\begin{proposition}\label{pr:ss}
  Connected, semisimple, linear Lie groups are linearly type-I-preserving.
\end{proposition}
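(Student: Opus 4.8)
The plan is to identify $V\rtimes\bG$ with (the identity component of) the real points of a linear algebraic group over $\bR$, and then to invoke \cite[Theorem 2]{be} together with \Cref{re:cofin}. The first task is to record that a connected semisimple \emph{linear} Lie group, together with the representation $\pi$, is algebraic. I would choose a faithful finite-dimensional representation $\sigma:\bG\to GL(W)$ (available by linearity) and form the block-diagonal faithful representation $\sigma\oplus\pi:\bG\to GL(W)\times GL(V)\subseteq GL(W\oplus V)$. Its image is a connected Lie subgroup whose Lie algebra is the semisimple subalgebra $d(\sigma\oplus\pi)(\fg)\subseteq\fgl(W\oplus V)$. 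Semisimple Lie subalgebras of $\fgl$ are algebraic (Chevalley), so the Zariski closure $\mathbf{G}\subseteq GL(W)\times GL(V)$ of the image is a semisimple algebraic $\bR$-group with $\mathbf{G}(\bR)^0$ equal to the image; faithfulness identifies $\bG\cong\mathbf{G}(\bR)^0$.

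Because the image lies in the block-diagonal subgroup $GL(W)\times GL(V)$, the projection $\mathrm{pr}_V:\mathbf{G}\to GL(V)$ is an algebraic homomorphism whose restriction to $\bG$ is $\pi$; in particular the $\bG$-action on $V$ is the restriction of a rational $\mathbf{G}$-action. Thus $\mathbf{L}:=\mathbf{V}\rtimes\mathbf{G}$, with $\mathbf{V}$ the additive (vector) group on $V$, is a connected linear algebraic group over $\bR$, and its real points are
\[
  \mathbf{L}(\bR)=V\rtimes\mathbf{G}(\bR),\qquad \mathbf{L}(\bR)^0=V\rtimes\mathbf{G}(\bR)^0=V\rtimes\bG,
\]
the last identification using that $V$ is connected. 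Since the real points of a real algebraic group have finitely many components, $V\rtimes\bG$ is open of finite index in $\mathbf{L}(\bR)$. The latter is type-I by \cite[Theorem 2]{be}, and the type-I property passes to open subgroups (\Cref{re:cofin}), so $V\rtimes\bG$ is type-I, which is exactly linear type-I preservation.

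The main obstacle is the algebraicity bookkeeping of the first two paragraphs, i.e.\ arranging that $\bG$ \emph{and} the representation $\pi$ are simultaneously the real points, respectively the restriction of a rational morphism, of one algebraic datum; the block-diagonal faithful-representation trick together with Chevalley's algebraicity theorem for semisimple subalgebras settles this cleanly. (Alternatively one could run the Mackey machine directly: the $\bG$-action on $\widehat V\cong V^*$ is polynomial and hence has locally closed orbits, so the regular-embedding hypothesis of \cite[Theorem 3.12]{mack-unit} holds via \cite[Theorem 1]{glm}; the Mackey obstruction vanishes because each $\chi\in V^*$ extends to a character of $V\rtimes\bG_\chi$ that is trivial on the stabilizer $\bG_\chi$; and every $\bG_\chi$, being the real points of an algebraic subgroup of $\mathbf{G}$, is type-I by \cite[Theorem 2]{be}. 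This reproves the statement but rests on the same algebraicity input, so I would present the direct argument above as primary.)
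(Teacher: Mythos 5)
Your proposal is correct and follows essentially the same route as the paper: both arguments realize $V\rtimes\bG$ as the identity component of the real points of a real algebraic group (the paper citing \cite[\S 2.4]{ragh} and \cite[\S 20.3]{hmph} for the algebraicity of $\bG$ and of $\pi$, where you use the block-diagonal faithful representation plus Chevalley) and then invoke the type-I property of such groups. The only cosmetic difference is the final citation --- the paper applies \cite[Th\'eor\`eme 1]{dix-alg} directly to the identity component, while you apply \cite[Theorem 2]{be} to the full group of real points and descend along the open inclusion via \Cref{re:cofin}.
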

\begin{proof}
  The hypothesis implies that
  \begin{itemize}
  \item $\bG$ is the identity component of the group $\bH(\bR)$ of real points of a real-algebraic group $\bH$ (e.g. \cite[\S 2.4]{ragh});
  \item and upon complexifying, the classification (\cite[\S 20.3]{hmph}) of finite-dimensional representations of semisimple Lie algebras implies that our representation $\bG\to GL(n,\bR)$ is algebraic.
  \end{itemize}
  But then $\bR^n\rtimes \bG$ can once more be identified with the identity component of $\bH(\bR)$ for some real-algebraic $\bH$, and is thus of type I by \cite[Th\'eor\`eme 1]{dix-alg}.
\end{proof}

At the other end of the spectrum (from semisimplicity):

\begin{proposition}\label{pr:nilpcon}
  For a second-countable, locally-compact, connected nilpotent group $\bG$ the following conditions are equivalent.
  \begin{enumerate}[(a)]
  \item\label{item:20} $\bG$ is compact abelian.
  \item\label{item:21} $\bG$ is compact.
  \item\label{item:22} $\bG$ is linearly type-I-preserving.
  \item\label{item:23} $\bG$ does not surject onto $\bR$.
  \end{enumerate}
\end{proposition}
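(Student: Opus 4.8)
The plan is to establish the cyclic chain \Cref{item:20}$\Rightarrow$\Cref{item:21}$\Rightarrow$\Cref{item:22}$\Rightarrow$\Cref{item:23}$\Rightarrow$\Cref{item:20}, with only the last link requiring real work. The implication \Cref{item:20}$\Rightarrow$\Cref{item:21} is trivial, and \Cref{item:21}$\Rightarrow$\Cref{item:22} is immediate from \Cref{le:t1impl} (compactness implies type-I preservation, which implies its linear counterpart). For \Cref{item:22}$\Rightarrow$\Cref{item:23} I would argue contrapositively: if $\bG$ surjected onto $\bR$, then since the kernel of a continuous surjection onto a Hausdorff group is closed and normal, \Cref{le:quot} would force $\bR$ to be linearly type-I-preserving. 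But that property fails for $\bR$, as recorded in the proof of \Cref{pr:lca} via Mautner's example $M_5$ (see \cite[pp.137--138]{am}), a contradiction.

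The substantive step is \Cref{item:23}$\Rightarrow$\Cref{item:20}, and the strategy is to first extract compactness from \Cref{pr:nosurj} and then upgrade it to abelianness. To invoke \Cref{pr:nosurj} I must verify that the connected nilpotent group $\bG$ has neither infinite discrete abelian quotients nor vector-group quotients. The former is automatic: any continuous quotient of the connected group $\bG$ is connected, and a connected discrete group is trivial. For the latter, a surjection $\bG\twoheadrightarrow\bR^d$ with $d\ge 1$ followed by a coordinate projection $\bR^d\twoheadrightarrow\bR$ would contradict \Cref{item:23}. Hence \Cref{pr:nosurj} applies and $\bG$ is compact.

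It then remains to show that a compact connected nilpotent group is abelian. Here the plan is to pass through the projective-limit (Peter--Weyl) structure of compact connected groups \cite{hm3}: write $\bG\cong\varprojlim_i\bG_i$ as an inverse limit of compact connected Lie groups $\bG_i$, each a continuous quotient of $\bG$ and hence again nilpotent. For a connected compact Lie group the Lie algebra is reductive and splits as $\fz\oplus[\fg,\fg]$ with $[\fg,\fg]$ semisimple; nilpotency forces the semisimple summand to vanish, so $\fg$ is abelian and $\bG_i$ is a torus. An inverse limit of abelian groups is abelian, so $\bG$ is compact abelian, which is \Cref{item:20}.

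I expect the only genuinely delicate point to be this final compact-connected-nilpotent-implies-abelian step; the remaining implications are bookkeeping against the earlier results. Even there the difficulty is mostly one of citing the correct structure theorem for compact connected groups (the projective-limit decomposition in \cite{hm3}), since once one is reduced to the compact connected Lie case the reductive-versus-nilpotent dichotomy at the level of $\fg$ closes the argument at once.
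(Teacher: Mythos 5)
Your proposal is correct, and the first three implications coincide with the paper's. The substantive link \Cref{item:23}$\Rightarrow$\Cref{item:20} is where you genuinely diverge. The paper does not route through \Cref{pr:nosurj}; instead it quotes the structure theory of connected locally compact nilpotent groups directly: such a group has a unique maximal compact subgroup $\bK$, which is moreover \emph{central} (the Lie-group statement of \cite[\S 1.8]{ragh}, extended to the locally compact setting via the approximation theorem of \cite[\S 4.6, Theorem]{mz}), and $\bG/\bK$ is a successive extension of vector groups by \cite[Theorem 13]{iw}, hence surjects onto $\bR$ unless trivial. Under \Cref{item:23} this forces $\bG=\bK$, and centrality of $\bK$ then yields abelianness in the same stroke. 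Your version instead harvests compactness from the already-proved \Cref{pr:nosurj} (your verification of its hypotheses --- connectedness kills discrete quotients, \Cref{item:23} kills vector-group quotients --- is sound) and then supplies a separate argument that a compact connected nilpotent group is abelian, via the projective limit over compact connected Lie quotients and the reductive decomposition $\fg=\fz\oplus[\fg,\fg]$. That last step is correct and self-contained; note the paper effectively needs the same fact elsewhere (in the proof of \Cref{th:solv} it cites \cite[Corollary V.5.3$^*$]{serre-lag} for ``compact solvable groups are abelian''), so you could have shortened your final paragraph by citing that instead of redoing the Lie-algebra dichotomy. What your route buys is reuse of machinery already in the paper and independence from the centrality-of-the-maximal-compact-subgroup assertion, whose extension from the Lie case to general connected locally compact groups the paper only sketches; what it costs is having to prove abelianness as a second, separate step.
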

\begin{proof}
  {\bf \Cref{item:20} $\Rightarrow$ \Cref{item:21}:} obvious. 

  {\bf \Cref{item:21} $\Rightarrow$ \Cref{item:22}:} by \Cref{le:t1impl}.

  {\bf \Cref{item:22} $\Rightarrow$ \Cref{item:23}:} linear type-I preservation is inherited by quotients \Cref{le:quot} and $\bR$ does not have the property, by Mautner's example \cite[\S 6.8, 1.]{folland} used in the proof of \Cref{pr:lca}.

  {\bf \Cref{item:23} $\Rightarrow$ \Cref{item:20}.} This is general structure theory for locally compact nilpotent groups.

  Connected, locally compact nilpotent groups have unique maximal compact subgroups, and these are central: this is mentioned in \cite[\S 1.8]{ragh} for {\it Lie} groups, but connected locally compact groups surject onto Lie groups with arbitrarily small compact normal kernels \cite[\S 4.6, Theorem]{mz}, and the remark of \cite[\S 1.8]{ragh} extends to the present setting.

  Now, if $\bK\le \bG$ is the maximal compact subgroup, then $\bG/\bK$ is a successive extension of vector groups \cite[Theorem 13]{iw}, and it follows that $\bG/\bK$ surjects onto $\bR$ unless it is trivial.
\end{proof}

Upon relaxing nilpotence to solvability, compactness is no longer necessary.

\begin{example}\label{ex:toract}
  Given the usual rotation action of the circle $\bS^1$ on $\bC\cong \bR^2$, the group $\bG:=\bC\rtimes \bS^1$ is linearly type-I-preserving. This will be an immediate consequence of \Cref{th:solv}.
\end{example}

Rather, for solvable groups it is condition \Cref{item:23} of \Cref{pr:nilpcon} that is relevant to linear type-I preservation. 

\begin{theorem}\label{th:solv}
  For a connected solvable Lie group $\bG$ the following conditions are equivalent.
  \begin{enumerate}[(a)]
  \item\label{item:24} $\bG$ is linearly type-I-preserving.
  \item\label{item:25} $\bG$ does not surject onto $\bR$.
  \item\label{item:26} The abelianization
    \begin{equation*}
      \bG_{ab}:=\bG/\overline{[\bG,\bG]}
    \end{equation*}
    is compact.
  \item\label{item:27} $\bG_{ab}$ is a torus.
  \item\label{item:28} $\bG$ is of the form $\bM\rtimes \bK$, where
    \begin{itemize}
    \item $\bK\cong \bT^n$ is a torus;
    \item and $\bM$ is a nilpotent, connected Lie group;
    \item so that the abelianization $\bM_{ab}\cong \bR^d$ is a vector group carrying a $\bK$-action with no trivial summands.
    \end{itemize}
  \item\label{item:29} $\bG$ has a closed, connected, cocompact subgroup which acts by unipotent operators in every finite-dimensional $\bG$-representation.
  \end{enumerate}
\end{theorem}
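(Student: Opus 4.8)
The plan is to run the cycle (a)$\Rightarrow$(b)$\Leftrightarrow$(c)$\Leftrightarrow$(d)$\Rightarrow$(e)$\Rightarrow$(a), and then to splice (f) into the equivalence class through (c)$\Rightarrow$(f)$\Rightarrow$(b). Since $\bG$ is connected solvable Lie, its abelianization $\bG_{ab}$ is a connected abelian Lie group, hence $\bG_{ab}\cong \bR^a\times \bT^b$; every continuous homomorphism $\bG\to\bR$ factors through $\bG_{ab}$, so (b), (c), (d) all translate into the single condition $a=0$ and are thus mutually equivalent. For (a)$\Rightarrow$(b): were $\bG$ to surject onto $\bR$, \Cref{le:quot} would force $\bR$ to be linearly type-I-preserving, contradicting Mautner's example exactly as in the proof of \Cref{pr:lca}.

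For (c)$\Rightarrow$(f) I would take $\bH:=\overline{[\bG,\bG]}$, which is closed, connected, normal, and nilpotent (the derived subgroup of a connected solvable Lie group is nilpotent), and cocompact because $\bG/\bH=\bG_{ab}$ is compact under (c). In any finite-dimensional representation, Lie's theorem upper-triangularizes $\bG$ over $\bC$, and the diagonal entries are characters of $\bG$, hence trivial on commutators; so $\bH$ acts by unipotent operators, giving (f). Conversely, assuming (f) with such an $\bH$, a surjection $\bG\to\bR$ composed with $t\mapsto e^{t}$ would yield a one-dimensional representation in which $\bH$ (unipotent in dimension one, hence trivial) lies in the kernel; then the surjection $\bG\twoheadrightarrow\bR_{>0}$ factors through the compact group $\bG/\bH$, forcing $\bR_{>0}$ to be compact — absurd. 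Hence (f)$\Rightarrow$(b).

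The structural step (d)$\Rightarrow$(e) is where I expect the main difficulty. Here I would invoke Mostow's theory of maximal compact subgroups of connected solvable Lie groups: a maximal compact subgroup $\bK$ is a torus and admits a closed connected normal complement, so $\bG=\bM\rtimes\bK$ with $\bM$ taken to be $\overline{[\bG,\bG]}$ (nilpotent, normal, and cocompact), $\bK$ mapping isomorphically onto $\bG_{ab}=\bT^b$. A normal compact torus of $\bM$ is characteristic in $\bM$, hence normal in $\bG$, hence contained in every maximal torus and in particular in $\bK$; as $\bM\cap\bK=\{1\}$ it is trivial, so $\bM$ has trivial maximal torus and is therefore simply connected nilpotent with $\bM_{ab}\cong\bR^{d}$ a vector group. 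Finally the hypothesis that $\bK$ acts on $\bM_{ab}$ with no trivial summand is exactly the vanishing of the coinvariants $(\bM_{ab})_{\bK}$, i.e. the compactness of $\bG_{ab}=\bK$, which is (d). Assembling this decomposition rigorously — the existence of the torus complement and the simple connectivity of $\bM$ — is the technical heart, and is what I would spend the most care on, leaning on the cited structure theory of \cite{mz} and \cite{iw}.

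The remaining implication (e)$\Rightarrow$(a) is then clean and self-contained. Fix a finite-dimensional representation $\pi:\bG\to GL(V)$. From (e) we get $\bG_{ab}=(\bM_{ab})_{\bK}\times\bK=\bK$, which is compact, so every character $\bG\to\bC^{\times}$ is unitary. Upper-triangularizing $\pi$ on $V\otimes\bC$, each diagonal character restricts to a $\bK$-invariant unitary character of $\bM_{ab}\cong\bR^{d}$; since there are no nonzero $\bK$-fixed points in $\widehat{\bR^{d}}$, this restriction is trivial, so $\pi(\bM)$ is unipotent. Consequently $V\rtimes\bM$ — a vector group extended by a nilpotent group acting unipotently — is itself a connected nilpotent Lie group, in particular of type I. Writing $V\rtimes\bG=(V\rtimes\bM)\rtimes\bK$, we exhibit $V\rtimes\bG$ as an extension of a type-I group by the compact group $\bK$, whence it is type I by \Cref{cor:ext}. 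As this holds for every $V$, we obtain (a), closing the cycle.
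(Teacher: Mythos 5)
Most of your cycle is sound and runs parallel to the paper's. Your (a)$\Rightarrow$(b), the equivalence of (b), (c), (d), and your (e)$\Rightarrow$(a) are essentially the paper's arguments. Your handling of (f) is a genuine (and arguably cleaner) variation: the paper derives (f) from the semidirect decomposition (e), proving unipotence of $\bM$ by observing that the multiset of diagonal characters must be invariant under the $\bK$-action and that all non-trivial $\bK$-orbits on $\widehat{\bM_{ab}}$ are infinite; you instead get (f) directly from (c) by applying Lie's theorem to $\bG$ itself, noting that the diagonal characters are characters of the whole group and therefore kill $\overline{[\bG,\bG]}$, which is cocompact under (c). That shortcut is correct and avoids the orbit argument entirely. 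One small caveat in your (f)$\Rightarrow$(b): condition (f) does not require the cocompact subgroup $\bH$ to be normal, so ``factors through the compact group $\bG/\bH$'' is not literally available; but the argument survives, since $\bG=C\bH$ for a compact set $C$ and hence $\chi(\bG)=\chi(C)$ is compact for any character $\chi$ trivial on $\bH$.

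The genuine gap is (d)$\Rightarrow$(e), which you correctly identify as the technical heart but then discharge by appeal to ``Mostow's theory of maximal compact subgroups,'' asserting that a maximal torus $\bK$ ``admits a closed connected normal complement.'' That is not what the classical structure theory gives you: Mostow/Iwasawa provide conjugacy of maximal compact subgroups and a \emph{homeomorphism} $\bG\approx\bK\times\bR^n$, not a semidirect product decomposition $\bG=\bM\rtimes\bK$ with $\bM$ normal, and the maximal torus of a connected solvable Lie group is in general not normal (already in $\bC\rtimes\bS^1$), so there is no off-the-shelf splitting to invoke. The existence of a splitting of $\{1\}\to\overline{[\bG,\bG]}\to\bG\to\bG_{ab}\to\{1\}$ is precisely what the paper proves, in three steps: first that $\overline{[\bG,\bG]}_{ab}$ is a vector group (any torus factor would enlarge the maximal abelian compact quotient beyond $\bG_{ab}$), then that the $\bG_{ab}$-action on it has no trivial summands, and finally the splitting itself by induction on the derived series, using Moore's vanishing theorem $H^2(\bK,\bR^d)=0$ for compact $\bK$ together with the vanishing of $H^2(\bT^a,\bT^m)$ obtained by dualizing to an extension of free abelian groups. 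Without some such cohomological (or equivalent Lie-theoretic) argument, your decomposition --- and with it the implication (d)$\Rightarrow$(e) --- is unproved. Your subsequent deductions from the decomposition (triviality of the maximal torus of $\bM$, hence $\bM_{ab}\cong\bR^d$, and the identification of ``no trivial summands'' with vanishing coinvariants) are fine once the splitting is in hand.
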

\begin{proof}
  Deducing \Cref{item:28} from the other conditions is the most effortful part of the proof, so we defer it and address everything else first.

  {\bf \Cref{item:24} $\Rightarrow$ \Cref{item:25}.} Immediate from \Cref{le:quot}, recalling that $\bR$ is not linearly type-I-preserving because of Mautner's example \cite[\S 6.8, 1.]{folland}.

  {\bf \Cref{item:25}, \Cref{item:26} and \Cref{item:27} are all equivalent.} The abelianization $\bG_{ab}$ is a connected abelian Lie group, and hence a product $\bT^n\times \bR^d$ of a torus and a vector group \cite[Theorem 4.2.4]{de}. Clearly, then, failure to surject onto $\bR$ is equivalent to compactness and to being a torus.

  {\bf \Cref{item:28} $\Rightarrow$ \Cref{item:29}.} The $\bM$ of \Cref{item:27} is precisely such a group. It is cocompact by assumption, so it remains to argue the unipotence claim. Let
  \begin{equation*}
    \pi:\bG\to GL(V)
  \end{equation*}
  be a finite-dimensional representation. Because $\bM$ is solvable and connected we can assume \cite[Theorem V.5.1$^*$]{serre-lag}, after complexifying, that $\pi(\bM)$ leaves invariant a complete flag
  \begin{equation*}
    \{0\}=V_0\subset V_1\subset\cdots\subset V_{\dim V}=V_{\bC}:=V\otimes_{\bR}\bC
  \end{equation*}
  with one-dimensional (complex) subquotients $V_{i+1}/V_i$. Each subquotient corresponds to a character
  \begin{equation*}
    \bM\to \bM_{ab}\to \bC^{\times};
  \end{equation*}
  since everything in sight carries a $\bK$-action, the multiset of such characters must be invariant under that action. But by assumption the non-trivial orbits of the $\bK$-action on $\bM_{ab}$ (and hence also on its space of characters) are all infinite. It follows, then, that $\bM$ acts (via $\pi$) trivially on each $V_{i+1}/V_i$, i.e. by unipotent operators on $V$ as a whole.
  
  {\bf \Cref{item:29} $\Rightarrow$ \Cref{item:24}.} Let $\bM\le \bG$ be a cocompact subgroup with the requisite unipotence property, and $\pi:\bG\to GL(V)$ a finite-dimensional representation. The fact that $\pi(\bM)$ consists of unipotent operators on $V$ implies that $V\rtimes \bM$ is a nilpotent, connected Lie group, hence type-I \cite[Theorem 7.8 (b)]{folland}. But then so is the larger group in the cocompact embedding
  \begin{equation*}
    V\rtimes \bM\le V\rtimes \bG,
  \end{equation*}
  by \Cref{cor:ext}. 
  
  Finally, one last implication will complete the circle.
  
  {\bf \Cref{item:27} $\Rightarrow$ \Cref{item:28}.} Several claims need proving.

  {\bf Step (1): The abelianization of $\overline{[\bG,\bG]}$ is a vector group.} Consider the metabelian (i.e. abelian-by-abelian) quotient
  \begin{equation*}
    \{1\}\to \bA\to \bH\to \bG_{ab}\to \{1\},
  \end{equation*}
  of $\bG$, where $\bA$ is the abelianization $\overline{[\bG,\bG]}_{ab}$ of the derived group (so that $\bH$ is the quotient of $\bG$ by its {\it second} derived group). Since $\bA$ is connected and abelian, it decomposes as
  \begin{equation}\label{eq:art}
    \bA\cong \bR^d\times \bT^m
  \end{equation}
  (\cite[Theorem 4.2.4]{de} again). The conjugation action of the torus $\bG_{ab}$ on $\bA$ will leave its unique maximal compact abelian subgroup $\bT^m$ invariant, and hence centralize it because $\bG_{ab}$ is connected and
  \begin{equation*}
    \mathrm{Aut}(\bT^m)\cong GL(\bZ,m)
  \end{equation*}
  is discrete. Further, because $\bG_{ab}$ is compact, its adjoint action on the Lie algebra $\mathrm{Lie}(\bA)$ is completely reducible, so there is a $\bG_{ab}$-invariant complement to the invariant subspace
  \begin{equation*}
    \mathrm{Lie}(\bT^m)\le \mathrm{Lie}(\bA);
  \end{equation*}
  we can thus assume that the decomposition \Cref{eq:art} is $\bG_{ab}$-invariant, so we can quotient by the $\bR^d$ factor to obtain a compact quotient
  \begin{equation*}
    \{1\}\to \bT^d\to \bullet\to \bG_{ab}\to \{1\}
  \end{equation*}
  of $\bG$. Since compact solvable groups are abelian \cite[Corollary V.5.3$^*$]{serre-lag} and we are assuming $\bG_{ab}$ is the {\it largest} abelian quotient of $\bG$, we must have $d=0$. This concludes the proof of Step (1).

  {\bf Step (2): The $\bG_{ab}$-action on $\overline{[\bG,\bG]}_{ab}$ has no trivial summands.} Let $\bR^d$ be such a summand. Quotienting by a $\bK$-representation complementary to $\bR^d$ produces a central extension
  \begin{equation*}
    \{1\}\to \bR^d\to \bullet\to \bK\to \{1\},
  \end{equation*}
  which splits \cite[Theorem 2.3]{moore-ext} as a product $\bR^d\times \bK$. Since that product is obtainable as a quotient of $\bG$, the compactness assumption on $\bG_{ab}$ ensures that $d$ vanishes.
  
  {\bf Step (3): $\bG\to \bG_{ab}$ splits.} We prove this by induction on the length of the (closed) derived series
  \begin{equation*}
    \{1\}\subset \cdots \subset \overline{[[\bG,\bG],[\bG,\bG]]} \subset \overline{[\bG,\bG]}\subset \bG,
  \end{equation*}
  the base case of abelian groups being trivial.

  For the induction step, fit $\bG$ into the extension
  \begin{equation*}
    \{1\}\to \bA\to \bG\to \bH\to \{1\},
  \end{equation*}
  where $\bA$ is the smallest (abelian) term of the derived series. We have the splitting claim for $\bH$ by the induction hypothesis, so we can pass from $\bG$ to its subgroup
  \begin{equation}\label{eq:abulletk}
    \{1\}\to \bA\to \bullet\to \bK\to \{1\}
  \end{equation}
  for a torus $\bK\le \bH$ surjecting onto $\bH_{ab}\cong \bG_{ab}$. As in the argument for Step (1), we have a $\bK$-invariant decomposition \Cref{eq:art}; this means that the extension \Cref{eq:abulletk} corresponds to a cohomology class
  \begin{equation*}
    \alpha\in H^2(\bK,\bR^d\times \bT^m)\cong H^2(\bK,\bR^d)\oplus H^2(\bK,\bT^m),
  \end{equation*}
  and note that both of these two latter cohomology groups vanish:
  \begin{itemize}
  \item the first by \cite[Theorem 2.3]{moore-ext} because $\bK$ is compact;
  \item and the second because dualizing an (automatically-abelian \cite[Corollary V.5.3$^*$]{serre-lag}) extension of a torus by a torus gives an abelian extension of a free abelian group by another, and free abelian groups are projective in the category of abelian groups \cite[Theorem 3.5]{rot}.
  \end{itemize}
  \Cref{eq:abulletk} then splits, and we are done. 

  {\bf Step (4): Conclusion.} The pieces are ready for assembly: take the closed derived subgroup $\overline{[\bG,\bG]}$ for $\bM$, and let $\bK\le \bG$ be the image of any splitting $\iota:\bG_{ab}\to \bG$. 
\end{proof}

%%%%%%%%%%%%%%%%%%%%%%%%%%%%%%%%%%%%%%%%%%%%%%%%%%%%%%%%%%%%%%%%%%%%%%%%%%%%%
%%%%%%%%%%%%%%%%%%%%%%%%%%%%%%%%%%%%%%%%%%%%%%%%%%%%%%%%%%%%%%%%%%%%%%%%%%%%%

%\bibliography{bib}{}
%\bibliographystyle{plain}

\addcontentsline{toc}{section}{References}

\Addresses

\end{document}